\documentclass{amsart}
\usepackage{amsmath,amsfonts,amsthm,amssymb,indentfirst,epic,url,graphics}

\setlength{\textwidth}{6.5in}
\setlength{\textheight}{9.28in}
\setlength{\evensidemargin}{0in}
\setlength{\oddsidemargin}{0in}
\setlength{\topmargin}{-.5in}
\sloppy

\setlength{\mathsurround}{.167em}

\newtheorem{theorem}{Theorem}
\newtheorem{lemma}[theorem]{Lemma}
\newtheorem{corollary}[theorem]{Corollary}
\newtheorem{proposition}[theorem]{Proposition}
\newtheorem{definition}[theorem]{Definition}

\newtheorem{question}[theorem]{Question}
\newtheorem{example}[theorem]{Example}

\newcommand{\U}{\mathcal{U}\kern.05em}
\newcommand{\F}{\mathcal{F}}
\newcommand{\Z}{\mathbb{Z}}
\newcommand{\Q}{\mathbb{Q}}
\renewcommand{\r}{\mathrm}
\newcommand{\card}{\r{card}}
\newcommand{\supp}{\r{supp}}

\newcommand{\lang}{\begin{picture}(5,7)
\put(1.2,2.5){\rotatebox{45}{\line(1,0){6.0}}}
\put(1.2,2.5){\rotatebox{315}{\line(1,0){6.0}}}
\end{picture}\kern.16em}
\newcommand{\rang}{\kern.1em\begin{picture}(5,7)
\put(.1,2.5){\rotatebox{135}{\line(1,0){6.0}}}
\put(.1,2.5){\rotatebox{225}{\line(1,0){6.0}}}
\end{picture}}

\raggedbottom

\begin{document}

\begin{center}
\texttt{Comments, corrections,
and related references welcomed, as always!}\\[.5em]
{\TeX}ed \today
\vspace{2em}
\end{center}

\title%
[Homomorphisms on infinite direct products]%
{Homomorphisms on infinite direct products\\
of groups, rings and monoids}
\thanks{Archived at \url{http://arXiv.org/abs/1406.1932}\,.
After publication, any updates, errata, related references,
etc., found will be recorded at
\url{http://math.berkeley.edu/~gbergman/papers/}
}

\subjclass[2010]{Primary: 03C20, 08B25, 17A01, 20A15, 20K25, 20M15,
Secondary: 16B70, 16P60, 20K40, 22B05.}
\keywords{homomorphism on an infinite direct product of groups,
rings, or monoids; ultraproduct; slender, algebraically compact,
and cotorsion abelian groups.
}

\author{George M. Bergman}
\address{University of California\\
Berkeley, CA 94720-3840, USA}
\email{gbergman@math.berkeley.edu}

\begin{abstract}
We study properties of a group, abelian group, ring, or monoid $B$
which (a)~guarantee that every homomorphism from an infinite
direct product $\prod_I A_i$ of objects of the same sort onto $B$
factors through the direct product of finitely many ultraproducts of
the $A_i$ (possibly after composition with the natural map
$B\to B/Z(B)$ or some variant), and/or
(b)~guarantee that when a map does so factor (and the
index set has reasonable cardinality), the
ultrafilters involved must be principal.

A number of open questions and topics for further investigation
are noted.
\end{abstract}
\maketitle

\section{Introduction}\label{S.intro}
A direct product $\prod_{i\in I} A_i$ of infinitely many
nontrivial algebraic structures is in general a ``big'' object:
it has at least continuum cardinality, and if the operations
of the $A_i$ include a vector-space structure,
it has at least continuum dimension.
But there are many situations where the set
of homomorphisms from such a product to a fixed
object $B$ is unexpectedly restricted.

The poster child for this phenomenon is
the case where the objects are abelian groups,
and $B$ is the infinite cyclic group.
In that situation, if the index set $I$ is countable (or, indeed, of
less than an enormous cardinality -- some details are recalled
in~\S\ref{S.gp_prin}),
then every homomorphism $\prod_{i\in I} A_i\to B$ factors through
the projection of $\prod_{i\in I} A_i$ onto the product of finitely
many of the $A_i.$
An abelian group $B$ which, like the infinite cyclic group, has this
property, is called ``slender''.
Slender groups have been completely characterized~\cite{Nunke},
and slender modules over general rings have been studied.

Recent work by N.\,Nahlus and the author (\cite{prod_Lie1},
\cite{prod_Lie2}, \cite{cap_ultra}) on
factorization properties of homomorphisms on infinite direct
products of not-necessarily-associative algebras (motivated
by the case of Lie algebras) have turned up interesting
variants on the above sort of behavior.

First, it turns out that in that context, a useful way to prove
every surjective homomorphism
$\prod_{i\in I} A_i\to B$ factors through
finitely many of the $A_i$ is by proving
(a)~that every such homomorphism factors
through the product of finitely many {\em ultraproducts}
of the $A_i,$ and also
(b)~that whenever one has a map that factors
in that way, the ultrafilters involved must be principal.
In this note, we shall consider each of conditions~(a) and~(b) on
an object $B$ as of separate interest.

Secondly, we found in \cite{prod_Lie1},
\cite{prod_Lie2}, \cite{cap_ultra} that in many
cases, though one cannot say that
every surjective homomorphism from a direct product to $B$ will itself
factor in one of these ways, one can say
that for every such homomorphism $\prod_{i\in I} A_i\to B,$ the
induced homomorphism $\prod_{i\in I} A_i\to B/Z(B)$ so
factors, where $Z(B)$ denotes the zero-multiplication ideal,
$\{b\in B\mid b\,B=B\,b=\{0\}\}$
(which for $B$ a Lie algebra is the center of $B).$
In the next section, we shall get similar results
for groups, with $Z(B)$ the center of the group $B.$
(Note that these statements do not say that
every surjective homomorphism $\prod_{i\in I} A_i\to B/Z(B)$
factors as stated;
such a factorization is asserted only when the
homomorphism $\prod_{i\in I} A_i\to B/Z(B)$ can be lifted to a
homomorphism $\prod_{i\in I} A_i\to B.)$
Maalouf \cite{FM} abstracts this property, and
strengthens some of the results of the papers cited.

In the classical case of abelian
groups (and its generalization to modules),
the condition on an object $B$ that every homomorphism from an
infinite product {\em onto} $B$ yield a factorization through
finitely many of the $A_i,$ and the corresponding
condition for homomorphisms {\em into} $B,$ are equivalent.
Indeed, from any homomorphism
$\prod_{i\in I} A_i\to B,$ one can get, in an obvious way, a surjective
homomorphism $B\times\prod_{i\in I} A_i\to B,$ and the original
homomorphism factors through finitely many of the $A_i$ if and only if
that surjective map factors through $B$ and finitely many $A_i.$
This observation uses implicitly the fact that one can add
homomorphisms of abelian groups -- in this case, the map
$B\times\prod_{i\in I} A_i\to B$ induced by the given map
on the one hand, and the projection to $B$ on the other.
But one cannot do this for homomorphisms of
noncommutative groups, of algebras, etc.;
so for these, the condition involving arbitrary maps and
the condition involving surjective maps are not equivalent.
In these cases, the condition on $B$ defined
in terms of surjective homomorphisms is the more informative.
Once one has characterized those $B$ for which all surjective
homomorphisms $\prod_{i\in I} A_i\to B$ yield such a factorization,
one can, if one wishes, characterize the $B$ with the
corresponding property for general homomorphisms as the
objects all of whose subobjects have the previous property.

In stating results of the sort we shall obtain, one has a choice
between (i)~saying that if a structure $B$ does {\em not}
have one or another of a list of ``messy'' properties, then
every homomorphism from an infinite direct product
onto $B$ leads to a certain kind of factorization, or (ii)~the
contrapositive statement, that if there exists a
homomorphism onto $B$ that does not so factor, then $B$ has
one of those messy properties.
Each approach has its plusses and minuses; here I have
followed~(ii), because it seems more straightforward to
understand how a non-factorable map forces $B$ to have a messy property
than to show that the absence of certain messy properties
implies that all maps factor; and also because some of the
conditions on $B$ come in several versions, and I
find it easier to parse a statement having a single hypothesis
and several conclusions than one with several alternative
hypotheses giving a single conclusion.
(But the above choice also has its awkward aspects;
I can't say which is really best.)

In \S\S\ref{S.gp_ultra}-\ref{S.gp_prin}, we shall
study the case where our
structures are not-necessarily-abelian groups, in
\S\S\ref{S.ab}-\ref{S.ab_Chase&&},
abelian groups, then, briefly, in \S\ref{S.rings}
and \S\ref{S.monoids}, rings and monoids.
In \S\ref{S.lattices} we note why lattices
are likely to be another case worth examining.

For a short review, for the nonspecialist, of the concepts of filter,
ultrafilter and ultraproduct, see \cite[Appendix~A]{prod_Lie1};
and for measurable cardinals $\kappa,$ and $\!\kappa\!$-complete
ultrafilters, which come up in
\S\S\ref{S.gp_prin}-\ref{S.ab} below, \cite[Appendix~B]{prod_Lie1}.
For detailed developments of these
concepts see, e.g., \cite{Ch+Keis} or \cite{C+N}.

We remark that there is in the
literature a concept of ``noncommutative slender group''
that is quite different from the subject of
\S\S\ref{S.gp_ultra}-\ref{S.gp_prin} below.
The concept so named can be arrived at by regarding the infinite
direct product in the definition of a slender abelian group
as a {\em completed direct sum}, and using in the noncommutative case,
instead of the direct product, an analogously completed
noncommutative coproduct.
For work on that topic
see~\cite{ncmSpecker} and references given there.

\section{Factoring group homomorphisms through finitely many ultraproducts.}\label{S.gp_ultra}

Let $(G_i)_{i\in I}$ be a family of groups.
By the {\em support} of an element
$g=(g_i)_{i\in I}\in\prod_{i\in I} G_i,$ we will understand the set
\begin{equation}\begin{minipage}[c]{35pc}\label{d.supp}
$\supp(g)\ =\ \{i\in I\mid g_i\neq e\}\ \subseteq\ I.$
\end{minipage}\end{equation}
Given any subset $S\subseteq I,$ we shall
identify $\prod_{i\in S} G_i$ in the obvious way with
the subgroup of $\prod_{i\in I} G_i$ consisting of elements
whose support is contained in $S.$
In particular, for $g\in\prod_{i\in I} G_i,$ the statement
$g\in\prod_{i\in S}G_i$ will mean $\supp(g)\subseteq S,$
and the statement $g\in G_i$ will mean $\supp(g)\subseteq\{i\}.$

Whereas the theory of slender abelian groups is based on delicate
structural properties of those groups, most of
our results on nonabelian groups will be based on a much
simpler observation: Elements of $\prod_{i\in I} G_i$ with disjoint
supports centralize one another.
As a quick example, it is not hard to see that
if $B$ is a simple nonabelian group, and we have any surjective
homomorphism $f: \prod_{i\in I} G_i\to B,$ then for each
$S\subseteq I,$ the map $f$ must annihilate one of the mutually
centralizing subgroups
$\prod_{i\in S} G_i$ and $\prod_{i\in S-I} G_i.$
From this one can deduce that the subsets $S\subseteq I$ such that
$f$ factors through the projection
$\prod_{i\in I} G_i\to\prod_{i\in S} G_i$ form an ultrafilter
(principal or nonprincipal) on $I.$

In the opposite direction, however, if we take for $B$
a cyclic group of prime order $p$ (thus losing the leverage
provided by noncommutativity), and
let all the $G_i$ be copies of that group,
then by linear algebra over the field of $p$ elements, there
exist homomorphisms $\prod_{i\in I} G_i\to B$
that send every $G_i$ onto $B,$ and hence don't factor through
any proper subproduct $\prod_{i\in S} G_i.$

As indicated in the Introduction, we shall get around the problem
created (as above) by commutativity by
composing homomorphisms $\prod_{i\in I} G_i\to B$ with the
quotient map $B\to B/Z(B),$
where $Z(B)$ is the center of $B.$
Given a homomorphism $f:\prod_{i\in I} G_i\to B,$
the key to our considerations will be the family of subsets
\begin{equation}\begin{minipage}[c]{35pc}\label{d.F}
$\F\ =\ \{S\subseteq I\mid$ the composite map
$\prod_{i\in I} G_i\to B \to B/Z(B)$\\
\hspace*{13em} factors through the
projection $\prod_{i\in I} G_i\to \prod_{i\in S} G_i\}$\\[.3em]
\hspace*{1.5em}%
$=\ \{S\subseteq I\mid f(\prod_{i\in I-S} G_i)\subseteq Z(B)\}.$
\end{minipage}\end{equation}
It is easy to see that $\F,$ so defined, is a {\em filter} on $I,$
and that if we write
\begin{equation}\begin{minipage}[c]{35pc}\label{d.pi}
$\pi: B\ \to\ B/Z(B)$
\end{minipage}\end{equation}
for the quotient map, then $\F$ is the
largest filter such that $\pi f:\prod_{i\in I} G_i\to B/Z(B)$
factors through the reduced product $\prod_{i\in I} G_i/\F.$
(The above observation, and the next few, do not yet use the fact that
we are working with a map of the form $\pi f,$ but only that we are
considering a homomorphism on a product group.
The fact that our map has the form $\pi f$
will become significant starting with Lemma~\ref{L.gp_Us} below.)

If the filter $\F$ of~\eqref{d.F}
is a finite intersection of distinct ultrafilters,
$\U_0\cap\dots\cap\U_{n-1},$ then
$\prod_{i\in I} G_i/\F\cong\prod_{i\in I} G_i/\U_0\times\dots\times
\prod_{i\in I} G_i/\U_{n-1},$
so $\pi f$ factors through the projection to that product; and
conversely, if
$\pi f$ factors through the projection to such a product,
then $\F$ is the intersection of some subset of
the $\U_k$ (the minimal set of $\U_k$ allowing such a factorization).
In this connection, we recall
\begin{equation}\begin{minipage}[c]{35pc}\label{d.cap_ultra}
(\cite[Lemma~3, $(3){\iff}(5)$]{cap_ultra})
A filter $\F$ on a set $I$ can be written as
the intersection of finitely many ultrafilters on $I$ if and only if
for every partition of $I$ into countably many sets $J_m$
$(m\in\omega),$ there is at least one $m\in\omega$ such
that $I-J_m\in\F.$
\end{minipage}\end{equation}
Here and below, we make the conventions that a partition may include
one or more instances of the empty set, and that the
intersection of the empty family of filters on a set is the
set of all subsets of that set, i.e., the improper filter.
(These conventions are needed to make various statements correct
in degenerate cases.)

Let us note what~\eqref{d.cap_ultra} tells us about homomorphisms
on direct product groups.

\begin{lemma}\label{L.gp_Us}
Let $f:\prod_{i\in I} G_i\to B$ be a homomorphism from a direct
product of
groups $G_i$ to a group $B,$ which is surjective; or more generally,
such that the composite $\pi f:\prod_{i\in I} G_i\to B\to B/Z(B)$ is
surjective.
Then the following conditions are equivalent.
\begin{equation}\begin{minipage}[c]{35pc}\label{d.gp_no_Us}
$\pi f:\prod_{i\in I} G_i\to B/Z(B)$
does {\em not} factor through the natural map $\prod_{i\in I} G_i\to
\prod_{i\in I} G_i/\U_0\times\dots\times\prod_{i\in I} G_i/\U_{n-1}$
for any finite family of ultrafilters $\U_0,\dots,\U_{n-1}$ on $I.$
\end{minipage}\end{equation}
\begin{equation}\begin{minipage}[c]{35pc}\label{d.gp_Jn}
There exists a partition of $I$ into countably many
subsets $J_0,\,J_1,\dots\,,$ such that each subgroup
$\prod_{i\in J_n} G_i\subseteq \prod_{i\in I} G_i$ contains a
pair of elements $x_n,$ $y_n$
whose images in $B$ under $f$ do not commute.
\end{minipage}\end{equation}
\end{lemma}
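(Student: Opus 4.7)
The proof is essentially a translation between the combinatorial characterization~\eqref{d.cap_ultra} of ``finite intersection of ultrafilters'' and the algebraic content of the filter $\F$ from~\eqref{d.F}, with the extra gadget being the fact that elements of disjoint support centralize each other.

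First I would unwind the relationship between~\eqref{d.gp_no_Us} and the filter $\F.$ By the discussion preceding the lemma, $\pi f$ factors through $\prod_{i\in I} G_i/\U_0\times\dots\times\prod_{i\in I} G_i/\U_{n-1}$ for some finite family of ultrafilters if and only if $\F$ is a finite intersection of ultrafilters (the empty intersection giving the improper filter, corresponding to the case $n=0$ where $\pi f$ is trivial, allowed since $\pi f$ surjective would then force $B/Z(B)$ trivial). Thus~\eqref{d.gp_no_Us} is the statement that $\F$ is \emph{not} a finite intersection of ultrafilters, and by~\eqref{d.cap_ultra} this is equivalent to the existence of a countable partition $I=\bigsqcup_{m\in\omega} J_m$ such that $I-J_m\notin\F$ for every $m.$ By the second description of $\F$ in~\eqref{d.F}, $I-J_m\notin\F$ means precisely $f(\prod_{i\in J_m}G_i)\not\subseteq Z(B).$

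Next, for the direction~\eqref{d.gp_no_Us}${}\Rightarrow{}$\eqref{d.gp_Jn}, I would choose for each $m$ an element $x_m\in\prod_{i\in J_m}G_i$ whose image $f(x_m)$ lies outside $Z(B),$ so that there exists $b\in B$ not commuting with $f(x_m).$ Using the hypothesis that $\pi f$ is surjective, I would lift $b\,Z(B)$ to an element $y'_m\in\prod_{i\in I}G_i$ with $f(y'_m)\in b\,Z(B);$ then $f(x_m)$ still fails to commute with $f(y'_m)$ since central factors do not affect commutation. Now I would split $y'_m=y_m\,z_m$ with $y_m\in\prod_{i\in J_m}G_i$ and $z_m\in\prod_{i\in I-J_m}G_i.$ The key point is that $x_m$ and $z_m$ have disjoint supports and therefore commute in $\prod_{i\in I}G_i,$ so $f(x_m)$ commutes with $f(z_m);$ canceling $f(z_m)$ from the noncommuting pair $f(x_m),\,f(y'_m)=f(y_m)f(z_m)$ leaves $f(x_m)$ not commuting with $f(y_m),$ as required.

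For the reverse direction~\eqref{d.gp_Jn}${}\Rightarrow{}$\eqref{d.gp_no_Us}, I would argue contrapositively: if $\pi f$ did factor through a finite product of ultraproducts, then by the equivalence recalled above, for the partition $(J_m)$ provided by~\eqref{d.gp_Jn} there would be some $m$ with $I-J_m\in\F,$ i.e., $f(\prod_{i\in J_m}G_i)\subseteq Z(B).$ But then $f(x_m)$ and $f(y_m)$ both lie in $Z(B)$ and so commute, contradicting the choice of $x_m,y_m.$

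The only step that involves any work beyond bookkeeping is the lifting-and-truncation argument in the forward direction, and even that obstacle dissolves the moment one invokes disjoint-support commutativity; I would expect this to be the step the author emphasizes, since it is the first place where the specific form $\pi f$ (rather than an arbitrary homomorphism on the product) actually enters, via the surjectivity of $\pi f$ used to produce $y'_m.$
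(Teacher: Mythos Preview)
Your proposal is correct and follows essentially the same approach as the paper's proof: both directions go through the filter $\F$ and the characterization~\eqref{d.cap_ultra}, and the forward direction uses exactly the lift-and-truncate argument (with disjoint-support commutativity) that you describe. The only cosmetic difference is that the paper proves~\eqref{d.gp_Jn}$\implies$\eqref{d.gp_no_Us} directly rather than contrapositively, but the content is identical.
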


\begin{proof}
The easy direction is \eqref{d.gp_Jn}$\!\implies\!$\eqref{d.gp_no_Us}.
The fact that $f(x_n)$ and $f(y_n)$ do not commute
tells us, in particular, that $f(x_n)\notin Z(B).$
Hence for $\F$ defined by~\eqref{d.F} (noting in particular the
last line thereof), $I-J_n\notin\F.$
Since this is true for each $n,$~\eqref{d.cap_ultra}
tells us that the filter $\F$ is not a finite intersection of
ultrafilters, giving~\eqref{d.gp_no_Us}.

To get the converse, note that if~\eqref{d.gp_no_Us} holds,
equivalently, if $\F$ is not a finite intersection of
ultrafilters, then by~\eqref{d.cap_ultra}
we can partition $I$ into subsets $J_0,\,J_1,\dots,$
none of whose complements lies in $\F;$ i.e., by the
last line of~\eqref{d.F}, such
that each $\prod_{i\in J_n} G_i$ contains an element $x_n$ which
is mapped by $f$ to a noncentral element of $B.$
Fixing $n,$ this says that there exists an element $b\in B$
which does not commute with $f(x_n).$
I claim we can take such a $b$ to be the image of an element
$y\in\prod_{i\in I} G_i$ under $f.$
Indeed, if $f$ is surjective, this is immediate.
If instead we have the weaker hypothesis that
$\pi f:\prod_{i\in I} G_i\to B\to B/Z(B)$ is surjective, then we
can choose $y\in\prod_{i\in I} G_i$ whose image under $f$
is congruent to $b$ modulo $Z(B).$
Since multiplication by an element of $Z(B)$ does not affect
what members of $B$ an element commutes with,
$f(y)$ does not commute with~$f(x_n).$

Let us now write $y=y_n y',$ where $y_n\in\prod_{i\in J_n} G_i$
while $y'\in\prod_{i\in I-J_n} G_i.$
Then $y'$ commutes with $x_n,$ since they have disjoint
supports in our product group.
Hence $f(y')$ commutes with $f(x_n);$
hence if $f(y_n)$ also commuted with $f(x_n),$
then $f(y)=f(y_n)\,f(y')$ would commute
with $f(x_n),$ contradicting our choice of $y.$
Hence, rather, $x_n,\,y_n\in\prod_{i\in J_n} G_i$ have images in $B$
which do not commute, giving~\eqref{d.gp_Jn}.
\end{proof}

We can now get the first of our results showing that
any group $B$ admitting a map $f$ satisfying~\eqref{d.gp_no_Us}
must be ``big''.

\begin{theorem}\label{T.gp_main}
Let $B$ be a group such that there exist a family of groups
$(G_i)_{i\in I},$ and a group homomorphism
$f:\prod_{i\in I} G_i\to B,$ for which the induced homomorphism
$\pi f:\prod_{i\in I} G_i\to B/Z(B)$ does not factor
through the projection of $\prod_{i\in I} G_i$ to the product
of finitely many ultraproducts of the $G_i.$
Then $B$ contains families of elements
$(a_S)_{S\subseteq\omega},$ $(b_S)_{S\subseteq\omega},$
indexed by the subsets $S$ of $\omega,$ such that
\begin{equation}\begin{minipage}[c]{35pc}\label{d.commute}
All the elements $a_S$ $(S\subseteq\omega)$ commute with one another,
and all the elements $b_S$ $(S\subseteq\omega)$
likewise commute with one another.
\end{minipage}\end{equation}
\begin{equation}\begin{minipage}[c]{35pc}\label{d.a_disj}
For $S$ and $T$ {\em disjoint} subsets of $\omega,$
one has $a_S\,a_T=a_{S\cup T},$ $b_S\,b_T=b_{S\cup T},$
and $a_S\,b_T=b_T\,a_S.$
\end{minipage}\end{equation}
\begin{equation}\begin{minipage}[c]{35pc}\label{d.one}
For subsets $S$ and $T$ of $\omega$ with $\card(S\cap T)=1,$
$a_S\,b_T\neq b_T\,a_S.$
\end{minipage}\end{equation}
\end{theorem}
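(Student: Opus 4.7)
The plan is to reduce the theorem to Lemma \ref{L.gp_Us} and then produce the families $(a_S)$, $(b_S)$ by a coordinate-wise ``pasting'' construction on the partition supplied by that lemma.

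First, apply Lemma \ref{L.gp_Us}: the hypothesis on $B$ is exactly~\eqref{d.gp_no_Us}, so we obtain a partition $I=\bigsqcup_{n\in\omega} J_n$ and elements $x_n,y_n\in\prod_{i\in J_n}G_i$ with $f(x_n)$ and $f(y_n)$ not commuting in $B$. For each $S\subseteq\omega$, define $x_S\in\prod_{i\in I}G_i$ to be the element whose $i$-th coordinate, for $i\in J_n$, equals $(x_n)_i$ if $n\in S$ and $e$ otherwise; define $y_S$ analogously from the $y_n$. Set $a_S=f(x_S)$ and $b_S=f(y_S)$.

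The verifications of~\eqref{d.commute}--\eqref{d.one} then reduce to coordinate-wise commutation in $\prod_{i\in I}G_i$. For any $S,T\subseteq\omega$, the elements $x_S$ and $x_T$ commute because at each $i\in J_n$ both coordinates lie in the abelian set $\{e,(x_n)_i\}$; this gives~\eqref{d.commute}, and the analogous statement for the $b_S$. When $S\cap T=\emptyset$, the products $x_Sx_T$ and $x_S y_T$ are literally equal to $x_{S\cup T}$ and $y_T x_S$ respectively, since at each coordinate at most one factor is nontrivial; this yields~\eqref{d.a_disj}.

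The slightly more intricate step is~\eqref{d.one}. Writing $S\cap T=\{n\}$, $S'=S\smallsetminus\{n\}$, $T'=T\smallsetminus\{n\}$, I would factor $x_S=x_{S'}\,x_n$ and $y_T=y_{T'}\,y_n$, and then exploit the fact that the supports of $x_{S'}$ and $y_{T'}$ are disjoint from $J_n$ and from each other, so $x_{S'}$ and $y_{T'}$ commute with each other and with both $x_n$ and $y_n$. A short rearrangement then shows
\[
x_S\,y_T\ =\ (x_{S'}\,y_{T'})\,(x_n\,y_n),\qquad
y_T\,x_S\ =\ (x_{S'}\,y_{T'})\,(y_n\,x_n),
\]
so that equality of $a_Sb_T$ and $b_Ta_S$ would force $f(x_n)f(y_n)=f(y_n)f(x_n)$, contradicting the choice of $x_n,y_n$. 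The only potential obstacle is purely bookkeeping in the $|S\cap T|=1$ case, and it dissolves once one records the disjoint-support commutations carefully; no further input from $B$ or $f$ beyond the noncommuting pair $f(x_n),f(y_n)$ is needed.
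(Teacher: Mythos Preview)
Your proof is correct and follows essentially the same route as the paper's: both invoke Lemma~\ref{L.gp_Us} to get the partition and the noncommuting pairs $x_n,y_n$, define $x_S,y_S$ by pasting, and set $a_S=f(x_S)$, $b_S=f(y_S)$. The only cosmetic difference is in verifying~\eqref{d.one}: the paper observes directly that the commutator $[x_S,y_T]$ has $n$-th component $[x_n,y_n]$ for $n\in S\cap T$ and $e$ otherwise, whereas you achieve the same thing by factoring $x_S=x_{S'}x_n$, $y_T=y_{T'}y_n$ and rearranging via disjoint-support commutation.
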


\begin{proof}
Given $G_i$ and $f$ as in the hypothesis,
i.e., satisfying~\eqref{d.gp_no_Us}, Lemma~\ref{L.gp_Us}
gives us sets $J_n\subseteq I$ and elements
$x_n,$ $y_n$ $(n\in\omega)$ as in~\eqref{d.gp_Jn}.
Let $H_n=\prod_{i\in J_n} G_i\subseteq\prod_{i\in I} G_i$
$(n\in\omega),$
so that we can regard $\prod_{i\in I} G_i$ as $\prod_{n\in\omega}H_n,$
the $x_n$ and $y_n$ as elements of that group with
singleton supports, and $f$ as a homomorphism
$\prod_{n\in\omega}H_n\to B.$

For each subset $S\subseteq\omega,$ let $x_S$ be the
element of $\prod_{n\in\omega}H_n$ whose component at $n$
is $x_n$ if $n\in S,$ and $e$ otherwise, and let elements
$y_S$ be obtained similarly from the $y_n.$
It is easy to see that any two elements $x_S$ and $x_T$
commute with one another in $\prod_{n\in\omega}H_n,$
and similarly for the $\!y\!$'s;
and that for $S$ and $T$ disjoint,
$x_S\,x_T=x_{S\cup T},$ $y_S\,y_T=y_{S\cup T},$ and
$x_S\,y_T=y_T\,x_S.$
Hence, letting $a_S=f(x_S),$ $b_S=f(y_S),$ we get~\eqref{d.commute}
and~\eqref{d.a_disj}.

For general $S$ and $T,$ the commutator
$[x_S,y_T]$ will have $\!n\!$-th component
$[x_n,y_n]$ if $n\in S\cap T,$ and $e$ otherwise.
So if $S\cap T$ is exactly $\{n\}$ for some $n\in\omega,$
then $f([x_S,y_T])=f([x_n,y_n]),$ which by choice
of $x_n$ and $y_n$ is not $e,$ giving~\eqref{d.one}.
\end{proof}

By restricting the elements $b_T$ that we consider, we can get a
clearer view of the behavior of the elements $a_S$:

\begin{corollary}\label{C.c}
In the situation of Theorem~\ref{T.gp_main}, an element
$a_S$ $(S\subseteq\omega)$ commutes with an element $b_{\{n\}}$
$(n\in\omega)$ if and only if $n\notin S.$
Thus, the elements $a_S$ exhibit all possible combinations
of which members of the countable set $\{b_{\{n\}}\mid n\in\omega\}$
they commute with.
Hence they are distinct modulo $Z(B);$ so their
images in $B/Z(B)$ generate a commutative
subgroup of continuum cardinality.
\end{corollary}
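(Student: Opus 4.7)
The plan is to verify the four assertions of the corollary in order, each following almost immediately from the properties~\eqref{d.commute}--\eqref{d.one} established in Theorem~\ref{T.gp_main}.

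First I would prove the ``if and only if'' statement about commutation of $a_S$ with $b_{\{n\}}$. If $n\notin S,$ then $S$ and $\{n\}$ are disjoint, so by the last clause of~\eqref{d.a_disj}, $a_S\,b_{\{n\}}=b_{\{n\}}\,a_S.$ Conversely, if $n\in S,$ then $S\cap\{n\}=\{n\}$ has cardinality $1,$ so by~\eqref{d.one}, $a_S$ and $b_{\{n\}}$ fail to commute. This establishes the first sentence. The second sentence is a restatement: the function $S\mapsto\{n\in\omega\mid a_S$ commutes with $b_{\{n\}}\}$ is just $S\mapsto\omega\setminus S,$ which is a bijection from the power set of $\omega$ to itself, so every subset of $\{b_{\{n\}}\mid n\in\omega\}$ arises as the set of such $b_{\{n\}}$ commuting with some $a_S.$

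Next I would verify that the $a_S$ are pairwise noncongruent modulo $Z(B).$ Suppose $a_S\equiv a_T\pmod{Z(B)}$; write $a_S=z\,a_T$ with $z\in Z(B).$ Since $z$ commutes with everything in $B,$ for any $b\in B$ we have $a_S\,b=z\,a_T\,b$ and $b\,a_S=z\,b\,a_T,$ so $a_S$ commutes with $b$ iff $a_T$ does. Applied to $b=b_{\{n\}}$ for each $n\in\omega,$ the first part of the corollary forces $\omega\setminus S=\omega\setminus T,$ hence $S=T.$

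Finally, for the cardinality and commutativity assertion: the previous step shows the images $\overline{a_S}\in B/Z(B)$ are distinct as $S$ ranges over the $2^{\aleph_0}$ subsets of $\omega,$ and they pairwise commute by~\eqref{d.commute} (which passes to the quotient). Hence the subgroup of $B/Z(B)$ they generate is commutative and has at least continuum many elements, completing the proof.

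There is no serious obstacle here; every step is a direct deduction from Theorem~\ref{T.gp_main}. The only point requiring any attention is the observation used in the third paragraph that congruence modulo the center preserves the relation ``commutes with $b$'' for every $b\in B,$ which is what links the structural condition~\eqref{d.one} to a statement about $B/Z(B).$
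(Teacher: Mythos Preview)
Your proof is correct and follows essentially the same approach as the paper: both derive the first sentence directly from~\eqref{d.a_disj} and~\eqref{d.one}, both use the observation that multiplication by a central element does not change which elements something commutes with to get distinctness modulo $Z(B),$ and both invoke~\eqref{d.commute} for commutativity of the generated subgroup. You simply spell out in more detail what the paper compresses into a few sentences.
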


\begin{proof}
The first sentence is immediate from~\eqref{d.a_disj}
and~\eqref{d.one}, and clearly implies the second.
Since multiplication by a member of $Z(B)$ does not affect
what elements a member of $B$ commutes with, elements
which can be distinguished by the latter properties
are necessarily distinct modulo $Z(B).$
The group generated by the $a_S$ is commutative in view
of~\eqref{d.commute}, hence so is the image
of that group in $B/Z(B).$
\end{proof}

Above we have obtained ``element-theoretic'' consequences of
the existence of a map $\prod_{i\in I} A_i\to B$ that does not
factor through finitely many ultrafilters.
There are also ``subgroup-theoretic'' consequences.
We shall find it convenient to state some of these, not in terms
of image subgroups $\pi f(\prod_{i\in S} G_i)\subseteq B/Z(B),$
but in terms of the inverse images
$f(\prod_{i\in S} G_i)Z(B)$ of those subgroups in $B.$
Let us start by noting some general properties of this construction,
independent of whether $\pi f$ factors through
finitely many ultraproducts.

\begin{lemma}\label{L.B_S}
Let $B$ be a group, $(G_i)_{i\in I}$ a family of groups,
and $f:\prod_{i\in I} G_i\to B$ a homomorphism
which is surjective \textup{(}or more generally, satisfies
$B=f(\prod_{i\in I} G_i)\,Z(B)).$
For every subset $S\subseteq I,$ let
\begin{equation}\begin{minipage}[c]{35pc}\label{d.B_S}
$B_S=f(\prod_{i\in S}G_i)\,Z(B),$ a normal subgroup of $B.$
\end{minipage}\end{equation}
Then
\begin{equation}\begin{minipage}[c]{35pc}\label{d.capcup}
$B_\emptyset=Z(B),$ $B_I=B,$ and
for $S,\,T\subseteq I,$ one has $B_S\,B_T=B_{S\cup T}$ and
$B_S\cap B_T=B_{S\cap T}.$
\end{minipage}\end{equation}
\begin{equation}\begin{minipage}[c]{35pc}\label{d.centralizer}
For $S,\,T\subseteq I,$
the centralizer of $B_T$ in $B_S$ is $B_{S-T}.$
\end{minipage}\end{equation}
Hence \textup{(}again writing $\pi:B\to B/Z(B)$ for the
quotient map\textup{)},
\begin{equation}\begin{minipage}[c]{35pc}\label{d.prod}
For {\em disjoint} subsets $S,\,T\subseteq I,$
$\pi(B_{S\cup T})$ is the direct product of its subgroups
$\pi(B_S)$ and $\pi(B_T).$
\end{minipage}\end{equation}
Moreover,
\begin{equation}\begin{minipage}[c]{35pc}\label{d.surj}
If $(S_k)_{k\in K}$ is a family of pairwise disjoint subsets of $I,$
and we let $S=\bigcup_{k\in K} S_k,$ then the
map $\pi(B_S)\to\prod_{k\in K}\pi(B_{S_k})$ determined by the
projections $\pi(B_S)\to\pi(B_{S_k})$ \textup{(}which
by~\eqref{d.prod} is an isomorphism if $K$ is finite\textup{)}
is always surjective.
\end{minipage}\end{equation}
\end{lemma}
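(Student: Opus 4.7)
The plan is to derive the four assertions in sequence, with the one substantive step being the disjoint case of the intersection formula in~\eqref{d.capcup}; everything else reduces to bookkeeping once that is in hand, using the recurring facts that elements of $\prod_{i\in I} G_i$ with disjoint supports commute and that $Z(B)\subseteq B_S$ for every $S.$ The easy parts of~\eqref{d.capcup} go first: $B_\emptyset=Z(B)$ and $B_I=B$ are immediate from the definition and the hypothesis $B=f(\prod_{i\in I} G_i)\,Z(B);$ normality of $B_S$ follows from normality of $\prod_{i\in S} G_i$ in $\prod_{i\in I} G_i$ together with the fact that $Z(B)$-conjugation is trivial; and $B_S\,B_T=B_{S\cup T}$ comes from $\prod_{i\in S} G_i\cdot\prod_{i\in T} G_i=\prod_{i\in S\cup T} G_i$ combined with the centrality of $Z(B).$

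The heart of the proof is the disjoint case: when $S\cap T=\emptyset,$ I would show $B_S\cap B_T\subseteq Z(B).$ Given $x=f(a)\,z_1=f(b)\,z_2$ with $\supp(a)\subseteq S$ and $\supp(b)\subseteq T,$ the trick is to exhibit $x$ as centralizing every element of $B$ by decomposing an arbitrary $c\in\prod_{i\in I} G_i$ along the partition $I=S\cup T\cup(I-S-T)$ as $c=c_S\,c_T\,c_R$: then $f(c_S)$ commutes with $x=f(b)\,z_2$ by disjoint supports, $f(c_T)$ commutes with $x=f(a)\,z_1$ similarly, and $f(c_R)$ commutes with both $f(a)$ and $f(b).$ Since $B=f(\prod_{i\in I} G_i)\,Z(B),$ this puts $x$ in $Z(B).$ The general case $B_S\cap B_T=B_{S\cap T}$ then reduces to this: decomposing $a=a_1\,a_2$ and $b=b_1\,b_3$ with $a_1,\,b_1$ supported in $S\cap T,$ $a_2$ in $S-T,$ and $b_3$ in $T-S,$ the equation $f(a)\,z_1=f(b)\,z_2$ rearranges via the disjoint-support commutations to express $f(a_2)$ as an element of $B_{S\cap T}\,B_{T-S}\,Z(B)=B_T;$ combined with the obvious $f(a_2)\in B_{S-T}$ and the just-proved disjoint case $B_T\cap B_{S-T}\subseteq Z(B),$ we get $f(a_2)\in Z(B),$ so $x=f(a_1)\,f(a_2)\,z_1\in B_{S\cap T}.$

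The remaining pieces are short. For~\eqref{d.centralizer}, $B_{S-T}$ centralizes $B_T$ automatically from disjoint supports; conversely, if $x\in B_S$ centralizes $B_T,$ I would factor $x=u\,v$ with $u\in B_{S-T}$ and $v\in B_{S\cap T}\subseteq B_T,$ so that $v$ centralizes $B_T$ and (automatically) $B_{I-T},$ hence centralizes $B_T\,B_{I-T}=B,$ placing $v\in Z(B)\subseteq B_{S-T}$ and thus $x\in B_{S-T}.$ Statement~\eqref{d.prod} then follows from $B_{S\cup T}=B_S\,B_T$ with commuting factors together with $\pi(B_S)\cap\pi(B_T)=\pi(B_S\cap B_T)=\pi(Z(B))=\{e\}$ (the first equality using $Z(B)\subseteq B_S\cap B_T$). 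Finally for~\eqref{d.surj}, given $(y_k)\in\prod_{k\in K}\pi(B_{S_k})$ I would pick $a_k\in\prod_{i\in S_k} G_i$ with $\pi(f(a_k))=y_k$ and splice them coordinate-wise into a single $a\in\prod_{i\in S} G_i;$ then for each $k,$ writing $a=a_k\,a'_k$ with $a'_k$ supported in $S-S_k$ and applying~\eqref{d.prod} to $S=S_k\cup(S-S_k),$ the image $\pi(f(a))$ projects to $\pi(f(a_k))=y_k.$ The main obstacle is the disjoint intersection case, which is the one step that genuinely uses the structure of $f$ rather than formal manipulation with subgroups and $Z(B).$
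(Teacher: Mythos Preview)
Your argument is correct and follows essentially the same approach as the paper's. The one organizational difference is that you isolate the disjoint case $B_S\cap B_T\subseteq Z(B)$ first (via a three-piece decomposition $c=c_S\,c_T\,c_R$ of a general element) and then reduce the general intersection formula to it, whereas the paper attacks the general case directly: it writes $f(u'')=f(u')^{-1}f(v)\,z_2\,z_1^{-1}\in B_T$ and combines this with $f(u'')\in B_{S-T}$ to conclude $f(u'')$ centralizes both $B_{S-T}$ and $B_{I-(S-T)},$ which is the disjoint-case reasoning embedded inline. Your remaining steps for \eqref{d.centralizer}, \eqref{d.prod}, and \eqref{d.surj} match the paper's almost verbatim.
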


\begin{proof}
That each $B_S$ is normal in $B,$ as asserted in~\eqref{d.B_S},
follows from the normality
of $\prod_{i\in S}G_i$ in $\prod_{i\in I} G_i,$ and
the centrality of $Z(B)$ in $B.$

The first three equalities
of~\eqref{d.capcup} are immediate, as is the direction
$B_S\cap B_T\supseteq B_{S\cap T}$ of the final equality.
Before proving the reverse inclusion,
let us note a case of~\eqref{d.centralizer} which is also immediate:
\begin{equation}\begin{minipage}[c]{35pc}\label{d.centralize}
If $S$ and $T$ are disjoint subsets of $I,$
then $B_S$ and $B_T$ centralize one another.
\end{minipage}\end{equation}

To get the remaining part of~\eqref{d.capcup},
$B_S\cap B_T\subseteq B_{S\cap T},$ consider an element of the
left-hand side, which we may write
\begin{equation}\begin{minipage}[c]{35pc}\label{d.u&v}
$f(u)\,z_1=f(v)\,z_2,$ where $u\in\prod_{i\in S}G_i,$
$v\in\prod_{i\in T}G_i,$ and $z_1,\,z_2\in Z(B).$
\end{minipage}\end{equation}
Let us write $u=u' u'',$ where $u'\in\prod_{i\in S\cap T}G_i$
and $u''\in\prod_{i\in S-T} G_i.$
Thus our element~\eqref{d.u&v} becomes $f(u')\,f(u'')\,z_1.$
Since $u'\in\prod_{i\in S\cap T}G_i,$
if we can show that $f(u'')\in Z(B),$ then~\eqref{d.u&v} will lie in
$f(\prod_{i\in S\cap T} G_i)\,Z(B)=B_{S\cap T},$ as required.

Thus, we need to show that $f(u'')$ centralizes
$B=B_{I}=B_{S-T}\,B_{I-(S-T)}.$
Since $f(u'')\in B_{S-T},$ it certainly centralizes $B_{I-(S-T)}.$
On the other hand if we write the equation in~\eqref{d.u&v} as
$f(u')\,f(u'')\,z_1=f(v)\,z_2,$ equivalently,
$f(u'')=f(u')^{-1}f(v)\,z_2\,z_1^{-1},$ we see that
all the factors on the right lie in $B_T,$
hence centralize $B_{S-T}.$
Hence so does $f(u''),$ completing the proof
of the last assertion of~\eqref{d.capcup}.

We can now easily prove~\eqref{d.centralizer}.
By~\eqref{d.capcup}, $B_{S-T}$ is contained in $B_S,$ and
by~\eqref{d.centralize}, it centralizes $B_T,$ so we need
only show that conversely, any element of $B_S$ that
centralizes $B_T$ lies in $B_{S-T}.$
As in the preceding argument, we can write our element of $B_S$ as
$f(u')\,f(u'')\,z,$ where $u'\in\prod_{i\in S\cap T}G_i,$
$u''\in\prod_{i\in S-T}G_i.$
This time, we need to prove that $f(u')\in Z(B).$
Now since $f(u')\,f(u'')\,z$ centralizes $B_T,$
and $f(u'')$ and $z$ automatically do, we see that $f(u')$
centralizes $B_T.$
Also, since $f(u')\in B_{S\cap T},$ and $S\cap T$ is disjoint
from $I-T,$ $f(u')$ centralizes $B_{I-T}.$
Hence it centralizes $B_T\,B_{I-T}=B,$ so it lies in $Z(B),$ as claimed.

\eqref{d.prod} follows easily
from~\eqref{d.centralizer} and~\eqref{d.capcup}.

To establish~\eqref{d.surj}, we take an element
of $\prod_{k\in K} \pi(B_{S_k}),$ lift its component
in each $\pi(B_{S_k})$ to an element of $\prod_{i\in S_k} G_i,$
regard these together as giving an element of $\prod_{i\in S} G_i,$
and note that the image of this
element in $\pi(B_S)$ has the desired property.
\end{proof}

Note that in the situation of the above lemma, the subgroups $B_S$
need not be distinct for distinct $S\subseteq I.$
For instance, if we take a family $(G_i)_{i\in I}$ of noncommutative
groups and an ultrafilter $\U$ on $I,$ let
$B=\prod_{i\in I} G_i/\U,$
and let $f:\prod_{i\in I} G_i\to B$ be the quotient map,
then the above construction gives only two distinct subgroups of $B:$
$B_S=B$ if $S\in\U,$ and $B_S=Z(B)$ otherwise.

We shall now get a factorization-through-ultraproducts result from
the above lemma.
Let us (following \cite[\S4.3]{cap_ultra}) call
subgroups $B',$ $B''$ of a group $B$ {\em almost direct factors}
if $B=B'B'',$ and each of $B',$ $B''$
is the centralizer in $B$ of the other.
A subgroup $B'\subseteq B$
belonging to such a pair (equivalently, such that $B'$
is its own double centralizer in $B,$ and $B$ is the product of
$B'$ and its centralizer) will thus be called
an {\em almost direct factor of $B.$}
We shall say $B$ has {\em chain condition on almost direct factors}
if the partially ordered set of almost direct factors of $B$
has ascending chain condition, equivalently
(since that partially ordered
set is self-dual under the operation of taking centralizers),
if it has descending chain condition.
(As noted in~\cite{cap_ultra}, these are the analogs for
groups of definitions first made for algebras in~\cite[\S6]{prod_Lie1}.)

Observe that in the situation treated in Lemma~\ref{L.B_S},
statements~\eqref{d.centralizer} and~\eqref{d.capcup}
show that for every $S\subseteq I,$ the subgroups $B_S,$ $B_{I-S}$ are
a pair of almost direct factors of $B.$
We deduce

\begin{theorem}[{$=$\cite[Proposition~10]{cap_ultra}}]\label{T.gp_CC}
Let $B$ be a group, and suppose that there exist a family of groups
$(G_i)_{i\in I},$ and a homomorphism
$f:\prod_{i\in I} G_i\to B,$ such that the induced homomorphism
$\pi f:\prod_{i\in I} G_i\to B/Z(B)$ is surjective, and does not factor
through the natural projection of $\prod_{i\in I} G_i$ to any finite
product of ultraproducts of the $G_i.$

Then $B$ does not have chain condition on almost direct factors.
In fact, it has a family of almost direct factors order-isomorphic
to the lattice $2^\omega,$ and forming a sublattice
of the lattice of subgroups of $B.$
\end{theorem}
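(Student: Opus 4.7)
The plan is to use the countable partition produced by Lemma~\ref{L.gp_Us} to define a lattice embedding of $2^\omega$ into the almost direct factors of $B$, and then use the noncommuting elements witnessed by~\eqref{d.gp_Jn} to prove that this embedding is injective.

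Explicitly, the hypotheses of the theorem imply~\eqref{d.gp_no_Us}, so Lemma~\ref{L.gp_Us} supplies a partition $I = \bigsqcup_{n\in\omega} J_n$ together with elements $x_n,\,y_n\in\prod_{i\in J_n} G_i$ whose images in $B$ under $f$ do not commute. For each subset $S\subseteq\omega$ set
\[
S^*\ =\ \bigcup_{n\in S} J_n\ \subseteq\ I,
\]
and associate to $S$ the subgroup $B_{S^*}$ defined in~\eqref{d.B_S}. By~\eqref{d.centralizer} applied with $(S,T)=(I,S^*)$, the centralizer of $B_{S^*}$ in $B$ is $B_{I-S^*}$, whose own centralizer is $B_{S^*}$; and by~\eqref{d.capcup}, $B_{S^*}\cdot B_{I-S^*}=B$. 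Hence each $B_{S^*}$ is an almost direct factor of $B$ in the sense preceding the theorem.

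To see that $S\mapsto B_{S^*}$ is a lattice homomorphism into the subgroup lattice of $B$, note that intersections are handled directly by~\eqref{d.capcup}, which gives $B_{S^*}\cap B_{T^*} = B_{(S\cap T)^*}$; and since the $B_{S^*}$ are normal, the subgroup join $B_{S^*}\vee B_{T^*}$ in the subgroup lattice equals the product $B_{S^*}\,B_{T^*}$, which by~\eqref{d.capcup} is $B_{(S\cup T)^*}$. So if injectivity holds, the image is a sublattice of the lattice of subgroups of $B$ order-isomorphic to $2^\omega$.

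The one genuinely substantive step, and the main obstacle, is injectivity: as the remark following Lemma~\ref{L.B_S} shows, the bare assignment $S\mapsto B_S$ can be wildly non-injective (collapsing entirely to $\{Z(B),B\}$ when $f$ factors through a single ultraproduct), so the argument must actively use the witnesses $x_n,y_n$. Given distinct $S,T\subseteq\omega$, choose $n\in S\triangle T$, say $n\in S-T$. Then $J_n\subseteq S^*$, so $f(x_n)\in B_{S^*}$; while $J_n\subseteq I-T^*$, so $f(y_n)\in B_{I-T^*}$, which by~\eqref{d.centralizer} is the full centralizer of $B_{T^*}$ in $B$. If $f(x_n)$ were to lie in $B_{T^*}$, it would have to commute with $f(y_n)$, contradicting the defining property of the pair $(x_n,y_n)$. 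Hence $B_{S^*}\neq B_{T^*}$, completing the embedding.
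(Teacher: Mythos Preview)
Your proof is correct and follows essentially the same approach as the paper's: obtain the partition $\{J_n\}$ from Lemma~\ref{L.gp_Us}, send $S\subseteq\omega$ to $B_{\bigcup_{n\in S}J_n}$, use Lemma~\ref{L.B_S} to see these are almost direct factors satisfying the lattice relations, and use the noncommuting witnesses $x_n,y_n$ to separate distinct subsets. The only cosmetic difference is that the paper directly verifies non-inclusions ($S\not\subseteq T\implies B_{S^*}\not\subseteq B_{T^*}$), whereas you verify injectivity and implicitly rely on the fact that an injective lattice homomorphism is automatically an order-embedding; your argument in fact already produces the element $f(x_n)\in B_{S^*}\setminus B_{T^*}$ witnessing the non-inclusion, so the two arguments are the same in substance.
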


\begin{proof}
Given $(G_i)_{i\in I}$ with the indicated non-factorization
property, let $J_0,\,J_1,\dots$ be as in Lemma~\ref{L.gp_Us}.
To every subset $S$ of $\omega,$ let us associate
the subgroup $B_{\bigcup_{n\in S} J_n}.$
From Lemma~\ref{L.B_S} we see that each of these subgroups is an
almost direct factor of $B,$ and that the lattice relations among
the subsets of $\omega$ are also satisfied by the corresponding
subgroups; so it will suffice to show that {\em non-inclusions}
of subsets of $\omega$ yield non-inclusions of subgroups.
If $S\not\subseteq T,$ take $m\in S-T.$
By our assumption on the $J_n,$ the subgroup $B_{J_m}$ is
not self-centralizing, hence though it centralizes
$B_{\bigcup_{n\in T} J_n},$ it does not centralize
$B_{\bigcup_{n\in S} J_n};$
so the latter is not contained in the former.
\end{proof}

Neither of the conclusions of
Theorem~\ref{T.gp_main} and Theorem~\ref{T.gp_CC} implies the other.
To get examples of these non-implications, let $G$ be a simple group.

If we embed $G^\omega$ in any simple overgroup $B,$ then $B$
inherits from $G^\omega$ families of elements $a_S,$ $b_S$ as
in Theorem~\ref{T.gp_main}; but being simple, $B$
has no nontrivial almost direct decompositions,
hence it satisfies chain condition on almost direct factors,
i.e., fails to satisfy the conclusion of Theorem~\ref{T.gp_CC}.

On the other hand, if we take for $B$ the group
$\bigoplus_\omega G$ of elements of $G^\omega$ having finite support,
and let $B_S=\bigoplus_S G$
for each $S\subseteq\omega,$ we find that these
subgroups satisfy~\eqref{d.capcup}-\eqref{d.prod}, hence
constitute a system of almost direct factors lattice-isomorphic
to $2^\omega,$ as in Theorem~\ref{T.gp_CC}.
But if $G$ is countable, $B$ will also be so, so
it cannot satisfy the conclusion of Theorem~\ref{T.gp_main}.

So neither of these groups $B$
admits a surjective homomorphism $f$ from a direct product group
such that $\pi f$ (which in both cases would
be $f,$ since $Z(B)$ is trivial)
fails to factor through finitely many ultraproducts.
However, in the first case, only
Theorem~\ref{T.gp_CC} rules this out,
while in the second, only Theorem~\ref{T.gp_main} does.

Though the above example with $B=\bigoplus_\omega G$
satisfies~\eqref{d.capcup}-\eqref{d.prod},
it does not satisfy~\eqref{d.surj}, as can be seen by taking for
the $S_k$ the singleton subsets of $\omega.$
One may ask whether for any group $B,$ every system of subgroups
$B_S$ $(S\subseteq I)$ of $B$ that
satisfies all of~\eqref{d.capcup}-\eqref{d.surj} arises as
in Lemma~\ref{L.B_S}.

The answer is still negative.
For instance, suppose $B$ is a group which has trivial center,
and which cannot be written as a homomorphic image
of a nonprincipal ultraproduct of
a family of groups indexed by $\omega.$
(We shall see in \S\ref{S.gp_prin} that the free group on two
generators, among many others, cannot be so written.)
Suppose we take a nonprincipal ultrafilter $\U$
on $\omega,$ and define $B_S\subseteq B$ to be all
of $B$ whenever $S\in\U,$ and $\{e\}$ otherwise.
It is not hard to verify that this family
satisfies~\eqref{d.capcup}-\eqref{d.surj}, but that if
it arose as in Lemma~\ref{L.B_S} (with $\omega$
for $I),$ then $B$ would be
a homomorphic image of $\prod_{n\in\omega} G_n/\U,$ contradicting our
choice of $B.$

We record a special case of Theorem~\ref{T.gp_CC}, for
easy application to some later examples.

\begin{corollary}[to Theorem~\ref{T.gp_CC} and its proof]\label{C.one_U}
Suppose $B$ is a group with trivial center, and having no
nontrivial direct product decomposition.
Then every homomorphism from a direct product group
$\prod_{i\in I} G_i$ onto $B$ factors through a
single ultraproduct $\prod_{i\in I} G_i/\U$ of the $G_i.$\qed
\end{corollary}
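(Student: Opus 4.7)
The plan is to show that the filter $\F$ of~\eqref{d.F} associated to $f$ is itself an ultrafilter. Once this is established, the observation following~\eqref{d.F} gives the desired factorization, since with $Z(B)=\{e\}$ the quotient map $\pi$ is an isomorphism, so the factorization of $\pi f$ through $\prod_{i\in I}G_i/\F$ is a factorization of $f$ itself.

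The main step is a direct application of Lemma~\ref{L.B_S}. Because $Z(B)$ is trivial, the subgroups of~\eqref{d.B_S} reduce to $B_S=f(\prod_{i\in S}G_i)$, and the filter $\F$ becomes $\{S\subseteq I\mid B_{I-S}=\{e\}\}$. For any $S\subseteq I$, the identity $B=B_S\,B_{I-S}$ from~\eqref{d.capcup} together with the centralizer description~\eqref{d.centralizer} (taken with $S$ replaced by $I$) says that $B_S$ and $B_{I-S}$ are an almost direct factor pair, each equal to the centralizer of the other in $B$. Their intersection therefore centralizes both, hence centralizes the product $B_S\,B_{I-S}=B$, and so lies in $Z(B)=\{e\}$. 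Thus $B=B_S\times B_{I-S}$ as an internal direct product.

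Since $B$ admits no nontrivial direct product decomposition, one of $B_S$, $B_{I-S}$ is trivial; equivalently, either $S\in\F$ or $I-S\in\F$. Assuming $B\neq\{e\}$ (else the corollary is vacuous), $\emptyset\notin\F$, so $\F$ is a proper filter satisfying the ultrafilter dichotomy, and the factorization of $f$ through $\prod_{i\in I}G_i/\F$ follows.

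I do not foresee a real obstacle; the argument is essentially the observation that the triviality of $Z(B)$ promotes the almost direct factor pair $(B_S,B_{I-S})$ built in the proof of Theorem~\ref{T.gp_CC} to an honest internal direct product, after which direct indecomposability forces the collection of such $S$ to constitute an ultrafilter rather than merely a filter.
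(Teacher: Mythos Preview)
Your argument is correct and is essentially the route the paper has in mind: the corollary is marked with \qed\ because it follows immediately from the observation, drawn from Lemma~\ref{L.B_S} (the engine of the proof of Theorem~\ref{T.gp_CC}), that each pair $(B_S,B_{I-S})$ is an almost direct factor pair, which under the hypotheses $Z(B)=\{e\}$ and ``no nontrivial direct product decomposition'' forces one of $B_S,\,B_{I-S}$ to be trivial for every $S$, so that $\F$ is an ultrafilter. Your handling of the degenerate case $B=\{e\}$ is also appropriate.
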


\section{Further examples}\label{S.gp_eg}

Theorem~\ref{T.gp_CC} shows that a group $B$ which admits
a surjective homomorphism from an infinite direct product group that
does not factor through finitely many ultraproducts looks, itself, in
some ways, like an infinite direct product -- at least after we
divide out $Z(B).$
The next example shows that this behavior of $B/Z(B)$
can coexist with very un-product-like behavior in $Z(B).$

\begin{example}\label{E.Heisenberg}
Groups $B$ and $G$ and a homomorphism $f:G^\omega\to B$
such that the induced subgroups $B_S$ $(S\subseteq\omega)$
are all distinct, but such that the center of
each of the given copies of $G$ in $G^\omega$
is mapped isomorphically to $Z(B)\neq\{e\};$
and which also show that in~\eqref{d.one}, the hypothesis
$\card(S\cap T)=1$ cannot be weakened to merely say that
$S\cap T$ is nonempty and finite.
\end{example}

\begin{proof}[Construction and proof]
Let $k$ be a field, and $G$ the Heisenberg group over $k;$
that is, the multiplicative group of upper triangular $3\times 3$
matrices with $\!1\!$'s on the main diagonal; equivalently,
the group of $\!3\!$-tuples of elements of $k$ under the multiplication
$(a,a',a'')\,(b,b',b'')=(a{+}b,\,a'{+}b',\,a''{+}b''{+}a'b).$
Clearly, the countable
power group $G^\omega$ can be described as the group of
$\!3\!$-tuples of elements of the power
ring $k^\omega$ under the operation given by the same formula.

Let us now take the $\!k\!$-vector-space homomorphism
$s:\bigoplus_\omega k\to k$ which for each $n$
acts on the $\!n\!$-th direct summand by $1\mapsto s_n,$
for some specified elements $s_n\in k-\{0\},$ and by linear algebra,
let us extend $s$ to a vector-space
homomorphism $\sigma: k^\omega\to k.$
Let $B$ be the homomorphic image of $G^\omega$ gotten by
dividing $Z(G^\omega)=k^\omega$ by $\r{ker}(\sigma).$
This can be described as
\begin{equation}\begin{minipage}[c]{35pc}\label{d.kwkwk}
$k^\omega\times k^\omega\times k,$ under the operation
$(a,a',a'')\,(b,b',b'')=(a+b,\,a'+b',\,a''+b''+\sigma(a' b)).$
\end{minipage}\end{equation}

I claim that $Z(B)=\{0\}\times\{0\}\times k.$
To see this, let us first show that every
$(a,b,c)\in B$ with $a\neq 0$ is noncentral.
Choose $n$ such that $a$ has $\!n\!$-th component $a_n\neq 0,$
and take $b'\in k^\omega$ to have $1$ in the
$\!n\!$-th position and $0$ in all others.
Then we find that
the commutator of $(a,b,c)$ and $(0,b',0)$ is $(0,0,s_n\,a_n)\neq e.$
The analogous argument shows $(a,b,c)$ noncentral if $b\neq 0.$
Elements $(0,0,c)$ are clearly central, so we get the asserted
description of $Z(B),$ and we see that this is the image in $B$ of
the center of each of our copies of $G$ in $G^\omega,$
and indeed, of the center of $G^S\subseteq G^\omega$
whenever $\emptyset\neq S\subseteq\omega.$

So though the images in $B/Z(B)$ of these subgroups $G^S$
are the corresponding factors
$(k\times k)^S\subseteq (k\times k)^\omega,$
when we look at the images in $Z(B)$ of their centers,
the distinctions among them disappear.

To get the final assertion of this example, let us
partition $\omega$ into the singletons $J_n=\{n\},$
so that in the notation of the proof of Theorems~\ref{T.gp_main},
each $H_n$ is $G.$
For each $n,$ let $x_n=(1,0,0),\ y_n=(0,1,0)$ in $H_n,$
and let us use these to construct elements
$a_S,\,b_T\in B$ as in that proof.
Then if $S$ and $T$ are subsets of $\omega$ which
intersect in a finite set $\{n_0,\dots,n_{d-1}\},$ we see that
in $B$ the commutator $[a_S,\,b_T]$
is $(0,0,s_{n_0}+\dots+s_{n_{d-1}}).$
If $d=1$ this is necessarily a nonidentity element, as stated
in~\eqref{d.one}; but if $S$ and $T$ intersect in more than
one element, this may or may not be true, depending on the
choice of the $s_n.$
(In particular, if the field $k$ is finite, then whatever the $s_n,$
there must be some nonempty family of $\leq\r{card}(k)$ $\!s_n\!$'s
that sum to zero.)
So the restriction $\card(S\cap T)=1$
in~\eqref{d.one} cannot be dropped.
\end{proof}

In the above example, the focus was on the part of the
map going into $Z(B);$ the map $G^\omega\to B/Z(B)$ was
a straightforward homomorphism of direct products.
But this is not always the case; that is,
the maps which~\eqref{d.surj} shows to be
surjective need not, in general, be isomorphisms.
For instance, in the example mentioned immediately after the
proof of Lemma~\ref{L.B_S}, where
the $G_i$ were arbitrary noncommutative groups,
and $f$ was the map $\prod_{i\in I} G_i\to(\prod_{i\in I} G_i)/\U,$
for $\U$ an ultrafilter on $I,$ if $\U$ is nonprincipal and we take for
the $S_k$ all the singletons $\{i\}$ $(i\in I),$ so that
$S=I,$ then each $\pi(B_{S_k})$ is trivial, but $\pi(B_S)$ is not.

One can, of course, modify this example to get one which also has the
property that every $G_i$ has nontrivial image in $B/Z(B):$

\begin{example}\label{E.prodXultra}
A group homomorphism $\prod_{n\in\omega}G_n\to B$ where
all the $B_{\{n\}}/Z(B)$ are nonzero \textup{(}so that all
the $B_S$ are distinct\textup{)}, but not all the
surjections of~\eqref{d.surj} are isomorphisms.
\end{example}

\begin{proof}[Construction]
Let $G_n$ $(n\in\omega)$ be groups with trivial centers,
each having a proper nontrivial normal subgroup $N_n\lhd G_n$
such that $G_n/N_n$ also has trivial center.
Let $\U$ be any nonprincipal ultrafilter on $\omega,$
let $H=(\prod_{n\in\omega} G_n)/\U,$ and
let $f: \prod_{n\in\omega} G_n\to H\times\prod_{n\in\omega} G_n/N_n$
be the map obtained from the obvious homomorphisms
$\prod_{n\in\omega} G_n\to H$ and
$\prod_{n\in\omega} G_n\to \prod_{n\in\omega} G_n/N_n.$
Let $B$ be the image of~$f.$

For $S\subseteq\omega,$ what does $B_S$ look like?
This depends on whether or not $S\in\U.$
If not, we see that $B_S=\prod_{n\in S} G_n/N_n;$
in particular, for every $n\in\omega$ we have $B_{\{n\}}=G_n/N_n.$
Thus for $S\notin\U,$ the group $B_S$ can be identified
with $\prod_{n\in S} B_{\{n\}}.$
However, when $S\in\U,$ the $\!H\!$-component of $B_S$ will be
the full group $H,$ which carries structure
from the normal subgroups $N_n$ which is ignored by each
group $B_{\{n\}};$ so in these cases, the natural
map $B_S\to\prod_{n\in S} B_{\{n\}}$ is not one-to-one.
\end{proof}

One can generalize the above construction by
replacing $\U$ with an arbitrary filter $\F,$
though the description of the groups $B_S$ is more complicated
to state when $S$ is neither a member of $\F$ nor the
complement of one.
And, of course, one can set up examples based on
more than one system of normal subgroups and more than one filter.

In the above example, though the system of
subgroups $B_S$ described does not have the property that the
maps of~\eqref{d.surj} are isomorphisms, the
group $B$ has other systems of subgroups that can be
shown to have that property.
Here is an example having no such family.

\begin{example}\label{E.finitely_many}
A group $B$ having elements $a_S$ and $b_S$ $(S\subseteq\omega)$
satisfying~\eqref{d.commute}-\eqref{d.one},
and distinct subgroups $B_S$ $(S\subseteq\omega)$
satisfying~\eqref{d.B_S}-\eqref{d.prod},
but having no such system of distinct
subgroups also satisfying~\eqref{d.surj} for any infinite
family of disjoint nonempty sets $(s_k)_{k\in K};$ so that $B$
cannot admit a surjective homomorphism from a direct
product group which does not factor through the
product of finitely many ultraproducts.
\end{example}

\begin{proof}[Construction and sketch of proof]
Let $G$ be an infinite simple group, and $B$ the subgroup of
$G^\omega$ consisting of those $\!\omega\!$-tuples
assuming only finitely many distinct values in $G.$
If we choose a pair
of noncommuting elements $x,y\in G,$ and for each $n\in\omega$
let $x_n$ be the element $x$ of the $\!n\!$-th copy of $G,$
and $y_n$ the element $y$ thereof, then we see that the
elements $x_S$ and $y_S$ $(S\subseteq\omega),$
constructed as in Theorem~\ref{T.gp_main},
will lie in $B,$ and, renamed $a_S$ and $b_S,$ will
satisfy~\eqref{d.commute}-\eqref{d.one}.
Similarly, if we let $B_S$ be the subgroup
of $B$ consisting of elements with support in $S,$
then~\eqref{d.capcup}-\eqref{d.prod} are immediate.

I will now sketch why $B$ admits no system of nontrivial almost direct
factors $B_{S_k}$ and $B_S$
satisfying~\eqref{d.surj} for any infinite $K.$
Note that $B$ has trivial center, so that almost direct
factors are simply direct factors.
Now it is easy to
verify using the simplicity of $G$ that if $B$ has a direct
product decomposition $B=B'\times B'',$ then
for each $n\in\omega,$ one of $B',$ $B''$ has as $\!n\!$-th coordinates
all members of $G,$ while the other has only $e$
in that coordinate.
From this one can deduce that every such decomposition has the form
$B'=B_S,$ $B''=B_{\omega-S}$ for some $S\subseteq\omega.$
We can now combine the ``finitely many distinct values''
condition in the definition of $B$ with the fact that $G$
is infinite to see the impossibility of an
infinite family of nontrivial almost direct factors $B_{S_k}$
satisfying the surjectivity condition~\eqref{d.surj}.

Lemma~\ref{L.B_S} and
the method of proof of Theorem~\ref{T.gp_CC} now show that
every homomorphism from a direct product onto $B$ must
factor through finitely many ultraproducts.
\end{proof}

(For some other results on the subgroup of a power
group $G^I$ consisting of
the elements with only finitely many distinct coordinates --
though for abelian groups -- see~\cite{Z[B]}.)

\section{Conditions forcing the ultrafilters to be principal}\label{S.gp_prin}

We have obtained conditions that force group homomorphisms
$\prod_{i\in I} G_i\to B$ to factor through the direct product
of finitely many ultraproducts of the $G_i.$
When can we say that any map that so factors must in fact factor
through the product of finitely many $G_i;$ i.e., that the ultrafilters
involved must be principal?

Here set-theoretic considerations come in.
If $\kappa$ is a {\em measurable cardinal}, then sets $I$ of
cardinality $\geq\kappa$ admit nonprincipal {\em $\!\kappa\!$-complete}
ultrafilters; that is, ultrafilters closed under
all $\!{<}\kappa\!$-fold intersections.
(Two quick terminological notes: (i)~The condition of being closed under
countable intersections, which by the above definition is
$\!\aleph_1\!$-completeness, is also called {\em countable
completeness}.
(ii)~We shall follow the definition of measurable cardinal
used in \cite{Ch+Keis}, which counts $\aleph_0$ as measurable; so we
will write ``uncountable measurable cardinal''
for what many authors simply call a measurable cardinal.)

If $\kappa$ is an uncountable measurable cardinal and $I$
a set of cardinality $\geq\kappa,$ and we take a family
$(G_i)_{i\in I}$ of groups (or more generally, of any sort of
algebraic structures defined by finitely many
finitary operations) whose cardinalities have
a common bound ${<}\kappa,$ then their ultraproducts
with respect to $\!\kappa\!$-complete ultrafilters behave very
much as do ordinary ultraproducts of finite groups with a
common finite bound on their orders;
to wit, every such ultraproduct is isomorphic to one of the $G_i.$
Hence, if there exists such a cardinal $\kappa,$ then
{\em every} group $B$ of cardinality ${<}\kappa$ can be represented
as an ultrapower of itself with respect to a nonprincipal
$\!\kappa\!$-complete ultrafilter $\U.$
So for every such $B$
we get a surjective homomorphism $B^I\to B$ which factors through
the ultrapower $B^I/\U$ but not through finitely many projection maps --
which seems to be bad news for the type of result we are hoping for.

However, it is known that if uncountable measurable cardinals
exist, they must be quite enormous \cite[Theorem~4.2.14]{Ch+Keis},
and that if the standard set theory,
ZFC, is consistent, it is consistent with the nonexistence
of such cardinals.
Hence it would be reasonable to work under the assumption that
no uncountable measurable cardinals exist, or, if they exist, to
restrict our index-sets to cardinalities less than all such cardinals.

The next observation shows that when doing the
spade-work of our investigation, we can in fact
restrict attention to the case where our index-set is countable.

\begin{lemma}\label{L.countable}
If $B$ is a group, then the following conditions are equivalent:
\begin{equation}\begin{minipage}[c]{35pc}\label{d.non-meas}
$B$ is a homomorphic image of an ultraproduct of a family of groups
indexed by an arbitrary set $I,$ with respect to some ultrafilter
$\U$ on $I$ that is not countably complete,
equivalently, that is not $\!\kappa\!$-complete
for any uncountable measurable cardinal~$\kappa.$
\end{minipage}\end{equation}
\begin{equation}\begin{minipage}[c]{35pc}\label{d.omega}
$B$ is a homomorphic image of an ultraproduct of a family of groups
indexed by $\omega,$ with
respect to a nonprincipal ultrafilter on $\omega.$
\end{minipage}\end{equation}

The same is true with ``groups'' replaced by objects
of any other variety of finitary algebras, in the sense
of universal algebra.
\end{lemma}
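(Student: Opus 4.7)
The plan is to handle \eqref{d.omega} $\Rightarrow$ \eqref{d.non-meas} trivially and then to attack the reverse direction by reducing an arbitrary non-countably-complete ultrafilter on $I$ to a nonprincipal ultrafilter on $\omega$ via a carefully chosen partition of $I$. The easy direction is immediate: any nonprincipal ultrafilter on $\omega$ fails countable completeness, since the cosingletons $\omega\setminus\{n\}$ all lie in it but intersect in $\emptyset$. The parenthetical equivalence inside \eqref{d.non-meas} --- failure of countable completeness coinciding with failure of $\!\kappa\!$-completeness for every uncountable measurable $\kappa$ --- is a standard set-theoretic fact (if a nonprincipal ultrafilter is countably complete, then it is $\!\kappa\!$-complete for the least cardinal $\kappa$ on which its completeness fails, and such a $\kappa$ is by definition measurable), so I will quote this and then argue only under the countable-completeness formulation.

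For \eqref{d.non-meas} $\Rightarrow$ \eqref{d.omega}, suppose $\U$ is a non-countably-complete ultrafilter on $I$ and $f\colon\prod_{i\in I}G_i/\U\to B$ is the given surjection. I would pick a descending sequence $B_0\supseteq B_1\supseteq\cdots$ of members of $\U$ with $\bigcap_n B_n\notin\U$, and form the partition $I=\bigsqcup_{n\in\omega}J_n$ defined by $J_n=B_{n-1}\setminus B_n$ for $n\geq 1$ and $J_0=(I\setminus B_0)\cup\bigcap_n B_n$. Each $J_n$ is then $\U$-small: for $n\geq 1$ because $J_n$ is disjoint from $B_n\in\U$, and for $n=0$ because a union of two $\U$-small sets is $\U$-small by the ultrafilter property. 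Let $\phi\colon I\to\omega$ send each $i$ to its block index and let $\U'=\phi_*\U$ be the push-forward; this is a nonprincipal ultrafilter on $\omega$ precisely because no block $J_n=\phi^{-1}\{n\}$ lies in $\U$. Finally, set $H_n=\prod_{i\in J_n}G_i$, so that the regrouping gives a natural identification $\prod_{n\in\omega}H_n=\prod_{i\in I}G_i$ as groups.

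The key point will be that this identification descends to a well-defined surjective homomorphism $\prod_{\omega}H_n/\U'\to\prod_{I}G_i/\U$: if $g$ and $g'$ agree on a $\U'$-large set $S$ of block indices, then the set $\phi^{-1}(S)=\bigcup_{n\in S}J_n$ is in $\U$ and is contained in $\{i:g_i=g'_i\}$, forcing the latter to lie in $\U$ as well. Surjectivity is immediate since the same element of the underlying product maps to the same class on both sides, and composing with $f$ yields the sought-after surjection $\prod_{\omega}H_n/\U'\to B$, establishing \eqref{d.omega}. Because the argument uses only that products, quotients, and ultraproducts are formed componentwise, it extends verbatim to any variety of finitary algebras. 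The main obstacle --- really the one nonroutine step --- is the partition construction in the second paragraph: arranging a countable partition of $I$ whose blocks are all $\U$-small is precisely the point at which the failure of countable completeness is invoked, and everything downstream is functoriality of the ultraproduct construction under refinement of the index set.
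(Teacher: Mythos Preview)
Your proof is correct and follows essentially the same route as the paper: both directions are handled identically, and for the substantive implication you construct a countable partition of $I$ into $\U$-null blocks, push $\U$ forward to a nonprincipal ultrafilter on $\omega$, and observe that the regrouped ultraproduct $\prod_{n\in\omega}H_n/\U'$ surjects onto $\prod_{i\in I}G_i/\U$. The only difference is cosmetic---you spell out the partition via a descending chain $B_0\supseteq B_1\supseteq\cdots$ where the paper simply asserts such a partition exists---but the underlying argument is the same.
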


\begin{proof}
The equivalence referred to in~\eqref{d.non-meas} follows from
the fact that any countably complete
ultrafilter must be $\!\kappa\!$-complete
for some uncountable measurable cardinal $\kappa$
\cite[Proposition 4.2.7]{Ch+Keis}.

Since a nonprincipal ultrafilter on $\omega$ is not countably complete,
we have~\eqref{d.omega}$\!\implies\!$\eqref{d.non-meas}.
On the other hand, it is easy to show that if $\U$ is a
non-countably-complete
ultrafilter on a set $I,$ then $I$ can be partitioned
as $\bigcup_{n\in\omega} J_n$ where no $J_n$ belongs to $\U.$
In this situation we find
that $\{S\subseteq\omega\mid \bigcup_{n\in S} J_n\in\U\}$
is a nonprincipal ultrafilter $\U'$ on $\omega,$
and that given groups $G_i$ $(i\in I),$ the natural map
$\prod_{i\in I} G_i\to \prod_{i\in I} G_i/\U$ factors through
$\prod_{n\in\omega}(\prod_{i\in J_n} G_i)/\U'.$
Hence, writing $\prod_{i\in J_n} G_i=H_n,$ we see that
if, as in~\eqref{d.non-meas}, $B$ is a homomorphic image
of $\prod_{i\in I} G_i/\U,$ then
it is also a homomorphic image of $\prod_{n\in\omega} H_n/\U',$
giving~\eqref{d.omega}.

The final assertion is clear.
(The assumption that our algebras are finitary is needed to insure that
algebra structures are induced on ultraproducts of such algebras.)
\end{proof}

So below, it will suffice to examine which groups
are homomorphic images of nonprincipal ultraproducts of
countable families of groups.
For brevity, we shall call an ultraproduct
of a countable family a ``countable ultraproduct''.

My first guess was that if $B$ was such a homomorphic image, then
the cardinality of $B/Z(B)$ would have to be either finite
or at least the cardinality of the continuum.
But Tom Scanlon suggested the following counterexample.

\begin{lemma}[T.\,Scanlon, personal communication]\label{L.TS}
Let $B$ be the semidirect product of the additive group $\Q$
of rational numbers, and the $\!2\!$-element group $\{\pm 1\},$
determined by the multiplicative action of the latter on the former.
\textup{(}I.e., $B$ has underlying set $\{\pm 1\}\times\Q,$ and
multiplication $(\alpha,a)(\beta,b)=(\alpha\beta,\beta a + b).)$

Then every ultrapower of $B$ admits a homomorphism onto $B.$
Hence though $B=B/Z(B)$ is countable, it is a homomorphic image
of a nonprincipal countable ultraproduct of groups.
\end{lemma}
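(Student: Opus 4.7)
The plan is to unpack the ultrapower $B^I/\U$ as a semidirect product. Since the underlying set of $B$ is $\{\pm 1\}\times\Q$ and the multiplication respects the two coordinates in a Mal'cev-style way, the ultrapower is canonically isomorphic to $(\Q^I/\U)\rtimes(\{\pm 1\}^I/\U)$. The finite factor $\{\pm 1\}^I/\U$ is just $\{\pm 1\}$ (an ultrapower of a finite structure returns the structure), so I get $B^I/\U\cong V\rtimes\{\pm 1\}$, where $V=\Q^I/\U$ is the ultrapower of $\Q$ as an abelian group and $\{\pm 1\}$ acts on it by multiplication by $\pm 1$. Note that $V$ inherits its $\Q$-vector-space structure from the $\Q$-vector-space structure on $\Q$, and that it is nonzero because the diagonal copy of $\Q$ embeds in it.

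Next, I would construct a surjective homomorphism $V\rtimes\{\pm 1\}\to\Q\rtimes\{\pm 1\}=B$ that is the identity on $\{\pm 1\}$ and is given on $V$ by some $\Q$-linear surjection $\varphi\colon V\to\Q$. Such a $\varphi$ exists: extend $1\in\Q\subseteq V$ to a $\Q$-basis of $V$ and let $\varphi$ read off that coordinate. The map $(\varphi,\r{id})\colon V\rtimes\{\pm 1\}\to\Q\rtimes\{\pm 1\}$ is a group homomorphism precisely if $\varphi$ is equivariant for the $\{\pm 1\}$-action, i.e.\ if $\varphi(-v)=-\varphi(v)$; but this is automatic because $\varphi$ is $\Q$-linear. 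Surjectivity is clear. This proves the first claim.

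For the last sentence, I take $I=\omega$ and let $\U$ be any nonprincipal ultrafilter on $\omega$; the previous paragraph then produces a surjection $B^\omega/\U\to B$, so $B$ is a homomorphic image of a countable nonprincipal ultraproduct of copies of $B$. Finally, $Z(B)=\{e\}$: any central element $(\alpha,a)$ must commute with $(-1,0)$, giving $a=-a$, so $a=0$; and it must commute with $(1,1)$, forcing $\alpha=1$. Hence $B=B/Z(B)$ is the countable example sought.

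The argument is essentially one observation — that the divisible torsion-free abelian part of $B$ has plenty of $\Q$-linear quotient maps onto $\Q$, and $\Q$-linearity is exactly the equivariance needed for the $\{\pm 1\}$-action — so there is no real obstacle; the only thing to verify carefully is that the ultrapower of the semidirect product splits as a semidirect product of the ultrapowers in the way claimed, which follows either from a direct computation with the multiplication rule or from the fact that the defining identities of $B$ are first-order and hence preserved by ultrapowers.
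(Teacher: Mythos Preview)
Your argument is correct and is essentially the paper's own proof: identify $B^I/\U$ with the semidirect product of $\{\pm 1\}$ acting on the nonzero $\Q$-vector space $\Q^I/\U$, pick a $\Q$-linear surjection $\varphi$ onto $\Q$, and check that $(\alpha,\beta)\mapsto(\alpha,\varphi(\beta))$ is a surjective homomorphism; the computation of $Z(B)$ is also identical. You have merely been a bit more explicit than the paper about why the ultrapower inherits the semidirect-product form and why additivity of $\varphi$ gives the required $\{\pm 1\}$-equivariance.
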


\begin{proof}
Clearly, the only elements of $B$ that commute with $(-1,0)$
are those with second component $0,$ while the only elements
that commute with $(1,1)$ are those with first component $1;$
so $Z(B)=\{e\},$ justifying the formula $B=B/Z(B).$

It is easy to see that for any ultrafilter $\U$ on any index set $I,$
the ultrapower $B^I/\U$ will be the semidirect product
of $\{\pm 1\}$ and $\Q^I/\U$
determined by the natural action of the former group on the latter.
Now $\Q^I/\U,$ like $\Q,$
is a nontrivial torsion-free divisible group, i.e.,
a nontrivial $\!\Q\!$-vector-space, and, as such, admits a surjective
homomorphism $\varphi:\Q^I/\U\to\Q.$
The map $B^I/\U\to B$ given by
$(\alpha,\beta)\mapsto(\alpha,\varphi(\beta))$ is easily seen
to be a surjective homomorphism, as claimed.
\end{proof}

By Corollary~\ref{C.one_U}, every homomorphism from a direct product
group $\prod_{i\in I} G_i$ onto the above group $B$
factors through a single ultraproduct of the $G_i;$
but the above result shows that (even when the index-set is
countable) the ultrafilter involved need not be principal.

In fact, the only condition I know that guarantees
factorization through finitely many of the $G_i$ is based on
requiring appropriate abelian subgroups of $B$ to satisfy
similar factorization properties as abelian groups.
The key observation is

\begin{lemma}\label{L.ultra}
Suppose $B$ is a homomorphic image of a nonprincipal
countable ultraproduct of groups, $(\prod_{n\in\omega} G_n)/\U.$
Then every element $b\in B$ lies in a homomorphic image within
$B$ of $\Z^\omega/\U,$ a nonprincipal countable ultrapower of $\Z.$
\end{lemma}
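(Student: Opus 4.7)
The plan is to exhibit the desired homomorphism explicitly by lifting $b$ to an element of the ultraproduct and using its coordinates to define a componentwise power map from $\Z^\omega$.

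\medskip

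\noindent\textbf{Step 1: Lifting $b$.}
Let $h\colon(\prod_{n\in\omega}G_n)/\U\to B$ denote the given surjection, and choose a preimage $\bar g\in(\prod_{n\in\omega}G_n)/\U$ of $b$, represented by some $(g_n)_{n\in\omega}\in\prod_{n\in\omega}G_n$.

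\medskip

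\noindent\textbf{Step 2: Componentwise power map.}
For each $n\in\omega$ the assignment $m\mapsto g_n^{\,m}$ is a group homomorphism $\Z\to G_n$. Taking the product over $n$ yields a homomorphism
\[
\varphi\colon\Z^\omega\ \longrightarrow\ \prod_{n\in\omega}G_n,\qquad (m_n)_{n\in\omega}\ \longmapsto\ (g_n^{\,m_n})_{n\in\omega}.
\]
Composing with the quotient $\prod_{n\in\omega}G_n\to(\prod_{n\in\omega}G_n)/\U$ and with $h$ gives a homomorphism $\psi\colon\Z^\omega\to B$.

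\medskip

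\noindent\textbf{Step 3: Factoring through $\Z^\omega/\U$.}
I need to check that if $(m_n)$ and $(m'_n)$ in $\Z^\omega$ agree on a set in $\U$, then $\psi((m_n))=\psi((m'_n))$. But on that set of indices the corresponding coordinates $g_n^{\,m_n}$ and $g_n^{\,m'_n}$ of $\varphi((m_n))$ and $\varphi((m'_n))$ agree as well, so their images in $(\prod_{n\in\omega}G_n)/\U$ coincide, and hence so do their images under $h$. Thus $\psi$ factors through the quotient map $\Z^\omega\to\Z^\omega/\U$ to give a homomorphism $\bar\psi\colon\Z^\omega/\U\to B$.

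\medskip

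\noindent\textbf{Step 4: $b$ lies in the image.}
Taking the constant sequence $(1,1,1,\dots)\in\Z^\omega$, its image under $\varphi$ is exactly $(g_n)_{n\in\omega}$, whose class in the ultraproduct is $\bar g$; applying $h$ gives $b$. Hence $b$ lies in $\bar\psi(\Z^\omega/\U)$, which is a homomorphic image of $\Z^\omega/\U$ inside $B$, as required.

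\medskip

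There is no serious obstacle: the one thing that needs a moment's thought is that $\varphi$ is a well-defined homomorphism, which works precisely because $\Z$ is cyclic, so each coordinate map $\Z\to G_n$ is a homomorphism into the (abelian) cyclic subgroup $\langle g_n\rangle$ even though the $G_n$ themselves may be nonabelian; everything else is a routine unwinding of what ``ultraproduct'' means.
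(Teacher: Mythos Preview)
Your proof is correct and follows essentially the same approach as the paper: lift $b$ to $(g_n)_{n\in\omega}$, define the componentwise power map $\Z^\omega\to\prod_n G_n$, and observe that it induces a homomorphism $\Z^\omega/\U\to B$ whose image contains $b$. Your Step~3 spells out explicitly the factoring through the ultrapower that the paper handles by invoking the commuting square, but the content is identical.
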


\begin{proof}
Given $b\in B,$ let $b$ be the image of
$(g_n)_{n\in\omega}\in\prod_{n\in\omega} G_n.$
Then the homomorphism $\gamma:\Z^\omega\to\prod_{n\in\omega} G_n$
taking $(m_n)_{n\in\omega}$ to $(g_n^{m_n})_{n\in\omega}$
induces a homomorphism
$\gamma':\Z^\omega/\U\to(\prod_{n\in\omega} G_n)/\U,$
with which it forms a commuting square.
Hence the composite map $\Z^\omega\to \prod_{n\in\omega} G_n\to
\prod_{n\in\omega} G_n/\U\to B,$
which carries $(1,1,\dots)\in\Z^\omega$ to
$b,$ factors through $\Z^\omega/\U;$
so $b$ lies in a homomorphic image of that group.
\end{proof}

To see that this puts strong restrictions on
groups $B$ admitting such homomorphisms, note that every
slender abelian group, in particular, the infinite cyclic group,
has the property of {\em not} being a homomorphic image of a
nonprincipal countable ultrapower of $\Z.$
We will see wider classes of abelian groups with this
property in the next section.

Though this note emphasizes the separate
conditions that maps from infinite products yield
factorizations through finitely many ultraproducts, and that
the ultraproducts in all such factorizations are principal, let
us record how the above lemma allows one to combine results of the
former sort obtained in~\S\ref{S.gp_ultra} above,
and results of the latter sort for abelian groups, which will be
obtained in~\S\S\ref{S.ab}-\ref{S.ab_ac+cotor},
to give sufficient conditions
for all maps from a direct product of groups to factor
through finitely many projection maps.

\begin{theorem}\label{T.fin_many_i}
Let $B$ be a group with the property that for every homomorphism
from a direct product group,
\begin{equation}\begin{minipage}[c]{35pc}\label{d.f}
$f:\prod_{i\in I} G_i\to B$ such that the composite homomorphism
$\pi f:\prod_{i\in I} G_i\to B/Z(B)$ is surjective,
\end{minipage}\end{equation}
the map $\pi f$
factors through the projection to finitely many ultraproducts
of the~$G_i$ \textup{(}cf.~\S\ref{S.gp_ultra} above\textup{)}.

Suppose, moreover, that for every almost direct
factor $B'\neq Z(B)$ of $B,$ the group $B'/Z(B)$
contains at least one element $b$ which does
not lie in any homomorphic image therein
of a nonprincipal countable ultraproduct of copies of $\Z$
\textup{(}cf.~\S\S\ref{S.ab}-\ref{S.ab_Chase&&}
below\textup{)}.

Then for every homomorphism~\eqref{d.f} such that $\card(I)$ is
less than every uncountable measurable cardinal
\textup{(}if any such cardinals exist\textup{)},
the composite $\pi f:\prod_{i\in I} G_i\to B/Z(B)$
factors through the product of finitely many of the~$G_i.$
\end{theorem}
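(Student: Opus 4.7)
The plan is to apply the first hypothesis to obtain a factorization of $\pi f$ through finitely many ultraproducts, and then, using the cardinality assumption together with the second hypothesis, to show that each of the ultrafilters in that factorization must be principal.

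More concretely, the first hypothesis furnishes a factorization $\pi f = g\circ p,$ where $p:\prod_{i\in I}G_i\to\prod_{k<n}\prod_{i\in I}G_i/\U_k$ for some distinct ultrafilters $\U_0,\dots,\U_{n-1}$ on $I.$ Distinctness together with a standard finite-intersection argument yields a partition $I = S_0\sqcup\cdots\sqcup S_{n-1}$ with $S_k\in\U_k$ for each $k.$ Because $S_k\in\U_k$ while $S_k\notin\U_j$ for $j\neq k,$ the image under $p$ of the subgroup $\prod_{i\in S_k}G_i$ lies wholly in the $\!k\!$-th factor and in fact surjects onto it. Hence, in the notation of Lemma~\ref{L.B_S}, $B_{S_k}/Z(B)=\pi(B_{S_k})$ is the image of $\prod_{i\in I}G_i/\U_k$ under $g,$ and is in particular a homomorphic image of $\prod_{i\in I}G_i/\U_k.$ Moreover $B_{S_k}$ is an almost direct factor of $B$ by the observation preceding Theorem~\ref{T.gp_CC}, and since $I-S_k\notin\U_k\supseteq\F,$ the characterization of $\F$ in~\eqref{d.F} gives $B_{S_k}\neq Z(B).$

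Now suppose for contradiction that some $\U_k$ is nonprincipal. The cardinality hypothesis, together with the fact (noted just after~\eqref{d.non-meas}) that every nonprincipal countably complete ultrafilter must be $\!\kappa\!$-complete for some uncountable measurable cardinal $\kappa,$ forces $\U_k$ to be non-countably-complete. Applying Lemma~\ref{L.countable} to the surjection $\prod_{i\in I}G_i/\U_k\to B_{S_k}/Z(B),$ we deduce that $B_{S_k}/Z(B)$ is itself a homomorphic image of a nonprincipal countable ultraproduct of groups. Lemma~\ref{L.ultra} then tells us that \emph{every} element of $B_{S_k}/Z(B)$ lies in a homomorphic image therein of a nonprincipal countable ultrapower of $\Z.$ But the second hypothesis, applied to the almost direct factor $B'=B_{S_k}\neq Z(B),$ supplies an element of $B_{S_k}/Z(B)$ lying in no such homomorphic image---a contradiction. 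Hence every $\U_k$ is principal, say generated by $\{i_k\},$ so $\pi f$ factors through the projection of $\prod_{i\in I}G_i$ onto $\prod_{k<n}G_{i_k},$ as required.

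The main obstacle I anticipate is the bookkeeping in the second paragraph: one must verify carefully that $B_{S_k}/Z(B)$ is a homomorphic image of the \emph{single} ultraproduct $\prod_{i\in I}G_i/\U_k,$ rather than only of the larger product $\prod_{j<n}\prod_{i\in I}G_i/\U_j,$ since this is exactly what allows each ultrafilter to be handled in isolation (Lemmas~\ref{L.countable} and~\ref{L.ultra} speak about one ultraproduct at a time). Once the partition $(S_k)$ and the identifications $B_{S_k}/Z(B)\cong g(\prod_{i\in I}G_i/\U_k)$ are in place, the rest is a direct chaining of the two hypotheses with the two cited lemmas.
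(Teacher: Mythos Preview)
Your approach is essentially the paper's: factor $\pi f$ through finitely many ultraproducts, pick a partition $I=\bigsqcup S_k$ with $S_k\in\U_k$, note that each $B_{S_k}$ is an almost direct factor, and use Lemmas~\ref{L.countable} and~\ref{L.ultra} together with the second hypothesis to rule out nonprincipal $\U_k$.

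There is, however, a small gap in your justification that $B_{S_k}\neq Z(B)$. You assert $\U_k\supseteq\F$, but this can fail: if the $k$-th ultraproduct happens to have \emph{trivial} image in $B/Z(B)$ under $g$, then $\pi f$ already factors through the remaining $\U_j$'s, so $\F\supseteq\bigcap_{j\neq k}\U_j$, and there is no reason for this to lie inside $\U_k$. In that situation $B_{S_k}=Z(B)$, the second hypothesis does not apply to $B_{S_k}$, and your contradiction argument does not rule out $\U_k$ being nonprincipal. The paper avoids this by first passing, without loss of generality, to a factorization in which each $\prod_{i\in I}G_i/\U_k$ has nontrivial image in $B/Z(B)$; once that reduction is made, $B_{S_k}/Z(B)=g(\prod_{i\in I}G_i/\U_k)\neq\{e\}$ is immediate (no appeal to $\F$ is needed), and the rest of your argument goes through verbatim.
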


\begin{proof}
Given a homomorphism~\eqref{d.f} satisfying the indicated
bound on $\card(I),$ let us factor
$\pi f$ through a direct product
$\prod_{i\in I} G_i/\U_0\times\dots\times\prod_{i\in I} G_i/\U_{m-1},$
where $\U_0,\dots,\U_{m-1}$ are distinct ultrafilters on $I.$
Without loss of generality, we may assume that
each $\prod_{i\in I} G_i/\U_k$ has nontrivial image in $B/Z(B).$
Choosing a partition $I=J_0\cup\dots\cup J_{m-1}$
with $J_k\in\U_k,$ we get, by Lemma~\ref{L.B_S},
an almost direct decomposition of $B$ into subgroups $B_{J_k}.$
Now suppose one of our ultrafilters $\U_k$ were not principal.
By our assumption on the cardinality of $I,$ $\U_k$ is not
$\!\kappa\!$-complete for any uncountable measurable
cardinal $\kappa,$ hence by Lemma~\ref{L.countable},
\eqref{d.non-meas}$\!\implies\!$\eqref{d.omega},
$B_{J_k}/Z(B)$ satisfies the hypothesis of Lemma~\ref{L.ultra}.
But since $B_{J_k}$ is an almost direct factor of $B,$
by assumption $B_{J_k}/Z(B)$ has an element
$b$ whose properties contradict the conclusion of that lemma.
So, rather, every $\U_k$ must be principal, say
generated by a singleton $\{n_k\}\subseteq J_k.$
Hence our factorization through
$(\prod_{i\in I} G_i)/\U_0\times\dots\times
(\prod_{i\in I} G_i)/\U_{m-1}$ is in fact a
factorization through $G_{n_0}\times\dots\times G_{n_{m-1}}.$
\end{proof}

Quick examples of groups $B$ to which the above result applies are free
groups on more than one generator, and the infinite dihedral group.
Indeed, since both groups have trivial center, the ``$\!/Z(B)$\!''
in the statement can be ignored, and since neither has a nontrivial
direct product decomposition, it suffices to verify that each
has an element $b$ not contained in any
homomorphic image of a nonprincipal countable ultraproduct of
copies of $\Z.$
In a free group, every nontrivial abelian subgroup is infinite
cyclic, hence slender, so any nonidentity element can
serve as such a $b.$
In the dihedral group $D=\lang x,\,y\mid x^2=e=y^2\rang,$
the element $b=xy$ generates an infinite cyclic subgroup
which is its own centralizer,
again establishing the hypothesis of the theorem.
Another class of examples is noted in

\begin{corollary}[{to Lemma~\ref{L.ultra}}]\label{C.Sym}
Let $X$ be an infinite set, and $B$ a group of permutations
of $X$ having a cyclic subgroup $\lang b\rang$ whose action
on $X$ has exactly one infinite orbit
\textup{(}no restriction being assumed on the number
of finite orbits of $\lang b\rang).$
Then the centralizer of $b$ in $B$ admits a homomorphism
to $\Z$ taking $b$ to $1.$
Hence $B$ is not a homomorphic image of a nonprincipal
ultraproduct of a countable family of groups.

In particular, this is true if $B$ is the full symmetric
group on $X,$ or more generally, if for some filter $\F$ on $X$
not consisting entirely of cofinite subsets, $B$ is the group
of permutations of $X$ whose fixed sets belong to $\F.$
\end{corollary}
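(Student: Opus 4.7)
The plan is to build the claimed homomorphism $\phi\colon C_B(b)\to\Z$ directly, and then combine Lemma~\ref{L.ultra} with the slenderness of $\Z$ to rule out the ultraproduct representation of~$B.$

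Let $Y$ denote the unique infinite orbit of $\lang b\rang$ on $X,$ and fix a basepoint $y_0\in Y;$ the elements $b^k(y_0)$ for $k\in\Z$ are then all distinct.  Any $g\in C_B(b)$ permutes the orbits of $\lang b\rang$ and must carry the unique infinite one to itself, so $g(y_0)\in Y,$ and hence there is a unique integer $\phi(g)$ with $g(y_0)=b^{\phi(g)}(y_0).$  A short computation using $g\,b^n=b^n g$ gives $\phi(gh)=\phi(g)+\phi(h),$ so $\phi$ is a homomorphism, and evidently $\phi(b)=1.$

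For the second assertion, suppose for contradiction that $B$ is a homomorphic image of some nonprincipal countable ultraproduct of groups.  By Lemma~\ref{L.ultra} and its proof, there exist a nonprincipal ultrafilter $\U$ on $\omega$ and a homomorphism $\psi\colon\Z^\omega/\U\to B$ carrying the class of $(1,1,\dots)$ to $b.$  Since $\Z^\omega/\U$ is abelian and its image under $\psi$ contains $b,$ that image lies inside $C_B(b).$  Composing with $\phi$ then yields a homomorphism $\Z^\omega/\U\to\Z$ sending the class of $(1,1,\dots)$ to $1,$ which is therefore surjective --- making $\Z$ a homomorphic image of a nonprincipal countable ultrapower of itself, contrary to the slenderness of $\Z$ recalled in the remarks just preceding this corollary.

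For the final sentence, it suffices to exhibit a suitable $b$ in each $B.$  In the full symmetric group, take any countably infinite $Y\subseteq X$ and let $b$ permute $Y$ as a single $\Z$-cycle while fixing $X-Y.$  In the filter version, the hypothesis on $\F$ supplies some $S\in\F$ with infinite complement; choose a countably infinite $Y\subseteq X-S$ and take the analogous permutation $b$ fixing $X-Y.$  Then $\r{fix}(b)\supseteq S,$ so $b\in B,$ and $\lang b\rang$ has $Y$ as its only infinite orbit.  I expect the only real content to be the explicit construction of $\phi;$ once that is in hand, the remaining pieces fit together by Lemma~\ref{L.ultra} together with the abelianness of $\Z^\omega/\U$ (which forces $\psi$ to land in $C_B(b)$) and the slenderness of $\Z.$
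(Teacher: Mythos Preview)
Your proof is correct and follows essentially the same route as the paper: you construct the homomorphism $\phi$ by using that any centralizing element must preserve the unique infinite orbit $Y$ and act there as a power of $b$, and then invoke Lemma~\ref{L.ultra} together with the slenderness of $\Z$ exactly as the paper does (the paper phrases this as ``every commutative subgroup of $B$ containing $b$ admits a homomorphism onto $\Z$''). Your version is a bit more explicit---fixing a basepoint $y_0$ rather than simply asserting that $a|_Y=b^{n_a}$---but the argument is the same, and your handling of the final paragraph matches the paper's as well.
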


\begin{proof}
If two permutations $a$ and $b$ of a set $X$ commute, it is easy
to see that $a$ will carry orbits of $\lang b\rang$ to orbits
of $\lang b\rang,$ and, of course, the image orbits will have the
same cardinalities as the original orbits.
Hence, if $\lang b\rang$ has a unique infinite orbit $Y,$
then $a$ must carry $Y$ to itself;
and it is easy to verify that it must act on $Y$ by
some power $b^{n_a}$ of $b.$
The function $a\mapsto n_a$ now gives the
desired homomorphism of the centralizer of $b$ onto $\Z.$
Hence, every commutative subgroup of $B$ containing
$b$ admits a homomorphism onto $\Z,$ so as in the other examples
discussed above, $B$ is not a homomorphic image of a nonprincipal
ultraproduct of a countable family of groups.

Now if $\F$ is a filter on $X$ containing
a set $W$ which is not cofinite, we can take a countably
infinite subset $Y\subseteq X-W,$ and let $b$
be a permutation which
has $Y$ as an orbit, and fixes all other points of $X.$
This gives the final assertion of the corollary.
The full symmetric group on $X$ is the particular case where
$\F$ is the improper filter on $X.$
\end{proof}

(With a little more work, one can get a result similar to
the first paragraph of the above corollary under the weaker assumption
that $\lang b\rang$ has at least one but only finitely
many distinct infinite orbits, say
$\lang b\rang\,x_0,\dots, \lang b\rang\,x_{d-1}\subseteq X.$
In this case, for each $a$ centralizing $b,$ we
find that $a x_i = b^{n_{a,i}} x_{\pi_a(i)}$ $(0\leq i<d)$
for some permutation $\pi_a$ of $\{0,\dots,d-1\}$ and integers
$n_{a,0},\dots,n_{a,d-1}.$
It is then easy to verify
that the map $a\mapsto\sum_i n_{a,i}$ is a homomorphism
from the centralizer of $b$ to $\Z,$ which carries $b$ to~$d.)$

The next result, in contrast, gives a large class of groups that
{\em do} admit surjective homomorphisms from nonprincipal countable
ultraproducts.
The construction appears to be well-known, but I have not
been able to find a reference.

\begin{proposition}[reference?]\label{P.compact}
If a group $B$ admits a compact Hausdorff group topology,
then for any set $I$ and any ultrafilter $\U$ on $I,$ there
exists a homomorphism $B^I/\,\U\to B$ left-inverse to the
natural embedding $B\to B^I\to B^I/\,\U$ \textup{(}where the first
arrow is the diagonal map\textup{)}.

Hence, every group $B$ admitting a compact Hausdorff
group topology is a homomorphic
image of a nonprincipal countable ultraproduct of groups;
hence so is every homomorphic image of such a group.

These statements hold, more generally, with groups replaced
by the objects of any
variety of finitary algebras, in the sense of universal algebra.
\end{proposition}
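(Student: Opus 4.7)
The plan is to construct the required retraction $B^I/\,\U\to B$ as the standard \emph{ultralimit} map. Given $(b_i)_{i\in I}\in B^I$, define $\varphi((b_i))$ to be the unique point $b\in B$ such that $\{i\in I:b_i\in V\}\in\U$ for every open neighborhood $V$ of $b$. Existence follows from compactness by a finite-intersection-property argument: the family of closed sets
\[
\mathcal{C}\ =\ \{C\subseteq B\text{ closed}\mid\{i:b_i\in C\}\in\U\}
\]
is closed under finite intersection (since $\U$ is), contains $B$, and so has nonempty intersection in the compact space $B$; any $b$ in that intersection is an ultralimit, for if $V$ is an open neighborhood of $b$ then $B\setminus V\notin\mathcal{C}$, whence by the ultrafilter property $\{i:b_i\in V\}\in\U$. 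Uniqueness follows from Hausdorffness: distinct ultralimits would be separated by disjoint opens, forcing two disjoint sets to lie in $\U$.

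Next I would verify that $\varphi$ is a homomorphism and descends to $B^I/\,\U$. Descent is immediate: if $(a_i)$ and $(b_i)$ agree on some $S\in\U$, then for every open $V$ the sets $\{i:a_i\in V\}$ and $\{i:b_i\in V\}$ differ on a subset of $I\setminus S\notin\U$, so they are $\U$-equivalent and the two ultralimits coincide. For multiplication, given $\U$-limits $a=\varphi((a_i))$ and $b=\varphi((b_i))$ and any open neighborhood $W$ of $ab$, continuity of multiplication yields open $U\ni a$, $V\ni b$ with $UV\subseteq W$; then
\[
\{i:a_ib_i\in W\}\ \supseteq\ \{i:a_i\in U\}\cap\{i:b_i\in V\}\ \in\ \U,
\]
so $\varphi((a_ib_i))=ab$. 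Continuity of inversion handles the analogous check for $(a_i^{-1})$. Finally, the constant family $(b,b,\dots)$ has $\U$-limit $b$, so $\varphi$ is indeed left-inverse to the diagonal embedding $B\to B^I/\,\U$.

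The second assertion of the proposition then follows by taking $I=\omega$ and $\U$ any nonprincipal ultrafilter on $\omega$: the map $\varphi$ is surjective (being a retraction), exhibiting $B$ as a homomorphic image of the nonprincipal countable ultrapower $B^\omega/\,\U$; composing with any quotient map extends the conclusion to all homomorphic images of~$B$. For the universal-algebra generalization, the argument is the same verbatim once one replaces ``continuity of multiplication'' by ``continuity of each basic operation'': for an $\!n\!$-ary operation $\omega$ and a neighborhood $W$ of $\omega(a^{(1)},\dots,a^{(n)})$, continuity supplies neighborhoods $V_j$ of the $a^{(j)}$ with $\omega(V_1,\dots,V_n)\subseteq W$, and the intersection of the $n$ corresponding sets in $\U$ witnesses that $\varphi$ respects~$\omega$.

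The only real content is the existence of the ultralimit from compactness plus Hausdorffness; everything else is a bookkeeping check that ultralimits commute with continuous finitary operations. I do not anticipate a substantive obstacle, since this is essentially the classical topological-ultraproduct construction; the task is just to record it in the form the proposition requires.
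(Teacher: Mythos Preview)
Your proof is correct and follows essentially the same approach as the paper's: both construct the retraction as the ultralimit map, obtaining existence of the limit point from compactness via a finite-intersection-property argument on closed sets and uniqueness from Hausdorffness. Your version spells out in more detail the verification that the map is a homomorphism (via continuity of the operations) and the descent to $B^I/\U$, which the paper merely declares ``easy to verify,'' but the underlying argument is the same.
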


\begin{proof}[Sketch of proof]
Fix a compact Hausdorff group topology on $B.$
Given $x\in B^I,$ let us associate to each $S\in\U$
the set $X_S = \{x_i\mid i\in S\}\subseteq B.$
These sets clearly have the finite intersection property,
hence so do their closures.
On the other hand, with the help of the definition of ultrafilter
and the Hausdorffness of our topology, it is easy to verify
that those closures can have no more than one common point.
Hence by compactness, the system
of sets $X_S$ must converge to a single point of $B.$
It is immediate that
the map associating to $x$ the limit point of this system depends
only on the image of $x$ in $B^I/\,\U,$
and so induces a map $B^I/\,\U\to B,$ and it is easy to
verify that this is a homomorphism with the asserted properties.

The statements in the second paragraph of the lemma clearly follow.
The final generalization holds by the same reasoning.
\end{proof}

By the above result, such a $B$ is a homomorphic image
of $B^I/\U$ for {\em every} ultrafilter $\U$ on every set $I.$
This suggests the following question, where for simplicity
we limit ourselves to $I=\omega.$

\begin{question}\label{Q.B,B'}
If $\U,$ $\U'$ are nonprincipal ultrafilters on $\omega,$
can every group $B$ which can be written as a homomorphic
image of an ultraproduct of groups with respect to $\U$
also be written as a homomorphic
image of an ultraproduct of groups with respect to $\U'$?
\end{question}

\begin{question}\label{Q.B,B',B''}
If the answer to Question~\ref{Q.B,B'}
is negative, is it at least true that for
any two ultrafilters $\U$ and $\U'$ on $\omega,$
there exists an ultrafilter $\U''$ on $\omega$
such that every group which can be written as a homomorphic
image of an ultraproduct of groups with respect to $\U$
{\em or} with respect to $\U'$ can be written as a homomorphic
image of an ultraproduct with respect to $\U''$?
\end{question}

If Question~\ref{Q.B,B',B''} has a positive answer, one
can deduce that the class of groups which can be written as
homomorphic images of nonprincipal countable ultraproducts of groups is
closed under finite direct products.

Proposition~\ref{P.compact} also leads one to wonder whether every
group $B$ which can be written as a homomorphic image of
a nonprincipal countable ultraproduct of groups can in fact
be written as a homomorphic image of
a nonprincipal countable ultrapower $B^\omega/\U$ of itself,
via a left inverse to the natural embedding $B\to B^\omega/\U.$
The answer is negative; we shall see in the second paragraph
after Lemma~\ref{L.ab_cotor_not_cp}
that there exist abelian groups for which this is not true.

Let us note a couple of groups $B$ for which the results
of this section do not, as far as I can see, give us any information.

\begin{question}\label{Q.B_gp+}
Can either of the following groups be written as
a homomorphic image of a nonprincipal
ultraproduct of a countable family of groups?\\[.3em]
\textup{(i)} An infinite finitely generated Burnside group?\\[.3em]
\textup{(ii)} The group of those permutations of an infinite
set that move only finitely many elements?
\textup{(}Contrast Corollary~\ref{C.Sym}.\textup{)}
\end{question}

Let us also record, since we know no counterexample,

\begin{question}\label{Q.ultra}
Is the converse to Lemma~\ref{L.ultra} true?
That is, if $\U$ is an ultrafilter on $\omega,$ and
$B$ is a group such that every $b\in B$ lies in a homomorphic
image within $B$ of $\Z^\omega/\U,$ must $B$ be a homomorphic
image of an ultraproduct group $\prod_{i\in\omega} G_i/\U$?
\end{question}

A positive answer seems extremely unlikely.
It would imply, in particular, that every torsion
group was such a homomorphic image for every $\U.$
(So it would imply positive answers to both parts of
Question~\ref{Q.B_gp+}.)

We remark that the results we have obtained
so far show that the two sorts
of properties of an object $B$ that we are considering in this note --
(a)~that surjective homomorphisms from direct products onto $B$
yield factorizations through finitely many ultraproducts, and
(b)~that when one has such a factorization, and the index-set
of the product is countable, the ultraproducts involved
must be principal -- are independent, for groups.
Theorem~\ref{T.fin_many_i} gave us examples satisfying both~(a)
and~(b), such as the
free group on more than one generator, and the infinite dihedral group.
Any infinite direct product of free
groups on more than one generator will still satisfy~(b)
(since a homomorphism from a nonprincipal countable ultraproduct
group {\em into} such a product will have trivial composite with the
projection onto each factor, hence must be trivial), but will fail to
satisfy~(a), by virtue of {\em being} an infinite direct product.
Examples satisfying~(a) but not~(b) are given by groups
satisfying the hypotheses of both
Corollary~\ref{C.one_U} and Proposition~\ref{P.compact};
for instance, finite simple groups.
Finally,
infinite direct products of such examples satisfy neither~(a) nor~(b).

(Incidentally, if a map $\prod_{i\in I} G_i\to B$ factors
$\prod_{i\in I} G_i\to (\prod_{i\in I} G_i)/\U_0\times\dots\times
(\prod_{i\in I} G_i)/\U_{m-1}\to B,$ one or more of the factors
$(\prod_{i\in I} G_i)/\U_k$ may be irrelevant to the
factorization, i.e., may map trivially to $B.$
In condition (b) in the above discussion,
we understand the phrase ``the ultraproducts involved''
to exclude such ``irrelevant'' factors; if we did not, (b) could
never hold.)

\section{Abelian groups}\label{S.ab}

We have seen that in the study of homomorphisms on products of
nonabelian groups, the analogous questions for
abelian groups are important.
We now turn to that case.

Although, as just noted,
the two sorts of condition we are interested in
are independent for nonabelian groups, we shall find
that this is not true of the corresponding conditions on abelian groups.

First, some notation, language, and basic observations.

\begin{definition}\label{D.ab}
In \S\S\ref{S.ab}-\ref{S.ab_Chase&&},
we shall use additive notation in abelian groups.

In groups $\Z^\omega,$ $(\Z/p\Z)^\omega,$ etc.,
we shall write $\delta_n$ $(n\in\omega)$ for the element having $1$
in the $\!n\!$-th position and $0$ in all other positions.

An abelian group $B$ is called {\em slender} if
it is torsion-free, and every homomorphism $f:\Z^\omega\to B$
annihilates all but finitely many of the $\delta_n.$
\end{definition}

The above definition of a slender abelian group is standard, but
the condition that $B$ be torsion-free is redundant: no
$B$ with torsion satisfies the condition on homomorphisms.
For in such a $B,$ we can choose an element $b$ of prime order $p,$
define the homomorphism $\bigoplus_{n\in\omega} \Z/p\Z\to\lang b\rang$
taking each $\delta_n$ to $b,$ extend this,
by linear algebra over the field $\Z/p\Z,$ to a homomorphism
$(\Z/p\Z)^\omega\to \lang b\rang,$ and precompose with
the natural map $\Z^\omega\to(\Z/p\Z)^\omega,$ to get a map
$\Z^\omega\to B$ that does not annihilate any $\delta_n.$

The condition of slenderness is stronger than it looks.
Indeed, our statement of that condition
in~\S\ref{S.intro} implicitly incorporated the following striking
complementary fact.
\begin{equation}\begin{minipage}[c]{35pc}\label{d.0_if}
(\cite[fact~(f) on p.159]{Fuchs2})\ \ If $B$ is a slender
abelian group, then the only homomorphism $f:\Z^\omega\to B$ which
annihilates all the elements $\delta_n$ $(n\in\omega)$ is $0.$
\end{minipage}\end{equation}

We can now prove

\begin{proposition}\label{P.slender}
The following conditions on an abelian group $B$ are equivalent.
\begin{equation}\begin{minipage}[c]{35pc}\label{d.ab_not_fin_U}
There exists a surjective homomorphism $f:\prod_{i\in I} A_i\to B$
from the direct product of a family of abelian groups to $B,$
which does not factor through the natural map from $\prod_{i\in I} A_i$
to the direct product of finitely many
countably complete ultraproducts of the $A_i.$
\end{minipage}\end{equation}
\begin{equation}\begin{minipage}[c]{35pc}\label{d.ab_not_fin_n}
There exists a surjective homomorphism $f:\prod_{n\in\omega} A_n\to B$
from the direct product of a countable family of abelian groups to $B,$
which does not factor through the projection of
$\prod_{n\in\omega} A_n$ to the direct
product of finitely many ultraproducts \textup{(}principal
or nonprincipal\textup{)} of the $A_n.$
\end{minipage}\end{equation}
\begin{equation}\begin{minipage}[c]{35pc}\label{d.not_slender}
$B$ is not slender.
\end{minipage}\end{equation}

These are also equivalent to the variants
of conditions~\eqref{d.ab_not_fin_U} and~\eqref{d.ab_not_fin_n}
without the assumption that $f$ be surjective.
\end{proposition}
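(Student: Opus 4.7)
The plan is to prove the equivalences via the cycle $(3)\Rightarrow(2)\Rightarrow(1)\Rightarrow(3)$, noting also that each of $(1)$ and $(2)$ is equivalent to its non-surjective variant: given a non-surjective $f:\prod_{i\in I}A_i\to B$, the map $f':B\times\prod_{i\in I}A_i\to B$, $(b,x)\mapsto b+f(x)$, is surjective, and (using the fact that the only ultrafilters on the enlarged index set $\{*\}\cup I$ containing $\{*\}$ are principal, while the others live on $I$) $f'$ factors through finitely many (countably complete) ultraproducts if and only if $f$ does.

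For $(3)\Rightarrow(2)$, I would use that $B$ is not slender to produce either $g:\Z^\omega\to B$ (the torsion-free case) or, via the linear-algebra construction noted just after Definition~\ref{D.ab}, $g:(\Z/p\Z)^\omega\to B$ (the torsion case), with $g(\delta_n)\neq 0$ for infinitely many $n$. Setting $\F=\{S\subseteq\omega\mid g$ annihilates the subproduct indexed by $\omega-S\}$ and choosing a countable partition of $\omega$ whose every block meets the set $\{n\mid g(\delta_n)\neq 0\}$, condition~\eqref{d.cap_ultra} shows $\F$ is not a finite intersection of ultrafilters, so $g$ does not factor through finitely many ultraproducts of the $A_n$. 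Adjoining $B$ as an extra direct factor upgrades $g$ to a surjective map with the same non-factorization property. The implication $(2)\Rightarrow(1)$ is immediate, since on $\omega$ every countably complete ultrafilter is principal.

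The core of the proof is $(1)\Rightarrow(3)$, which I carry out contrapositively: assume $B$ is slender, let $f:\prod_{i\in I}A_i\to B$ be arbitrary, and set $\F=\{S\subseteq I\mid f$ vanishes on $\prod_{i\in I-S}A_i\}$. A ``disjoint supports'' argument combined with~\eqref{d.cap_ultra} shows $\F$ is a finite intersection of ultrafilters $\U_0,\dots,\U_{n-1}$: if some countable partition $I=\bigsqcup J_m$ had $f$ nonvanishing on every $\prod_{i\in J_m}A_i$, picking $x_m$ with $f(x_m)\neq 0$ and exploiting their pairwise disjoint supports to define $\phi:\Z^\omega\to\prod_{i\in I}A_i$ by $(n_m)\mapsto\sum_m n_m x_m$ would yield a map $f\circ\phi:\Z^\omega\to B$ sending each $\delta_m$ to $f(x_m)\neq 0$, contradicting slenderness.

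The main obstacle is then showing that each $\U_k$ in this decomposition is countably complete. Let $B_k\subseteq B$ be the image of the $k$-th factor $\prod A_i/\U_k$ under the induced factorization $\bar f$. If some $\U_k$ fails to be countably complete, then by Lemma~\ref{L.countable} (in its abelian form, \eqref{d.non-meas}$\Rightarrow$\eqref{d.omega}) $B_k$ is a homomorphic image of a nonprincipal countable ultraproduct, and Lemma~\ref{L.ultra}, whose proof carries over verbatim to abelian groups, places every $b\in B_k$ inside a homomorphic image of $\Z^\omega/\U'$ for some nonprincipal $\U'$ on $\omega$. But every $\delta_n$ maps to zero in $\Z^\omega/\U'$ (nonprincipal ultrafilters on $\omega$ contain all cofinite sets), and since the subgroup $B_k\subseteq B$ inherits slenderness, \eqref{d.0_if} forces the composite $\Z^\omega\to\Z^\omega/\U'\to B_k$ to be zero; surjectivity of the first arrow then makes the map $\Z^\omega/\U'\to B_k$ itself zero, whence $B_k=0$. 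So the $k$-th factor may be dropped from $\bar f$; iterating yields the required factorization of $f$ through finitely many countably complete ultraproducts.
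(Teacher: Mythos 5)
Your proof is correct, and its skeleton coincides with the paper's: the same cycle \eqref{d.not_slender}$\implies$\eqref{d.ab_not_fin_n}$\implies$\eqref{d.ab_not_fin_U}$\implies$\eqref{d.not_slender}, the same reduction of the non-surjective variants via the map $B\times\prod_{i\in I}A_i\to B$, and the same two essential ingredients, \eqref{d.cap_ultra} and the strengthened slenderness property \eqref{d.0_if}. The one genuine divergence is in the crucial step showing that, for slender $B,$ every ultrafilter in a factorization must be countably complete. The paper argues by hand: it isolates a nonzero component $f_k$ through a non-countably-complete $\U_k,$ chooses a partition $I=\bigcup_n J_n$ with no $J_n\in\U_k$ and an element $(x_n)_{n\in\omega}$ with $f_k(x_n)=0$ for each $n$ but $f_k((x_n)_n)\neq 0,$ and builds the homomorphism $(d_n)\mapsto f_k((d_nx_n)_n):\Z^\omega\to B,$ which kills every $\delta_n$ yet is nonzero, contradicting \eqref{d.0_if}. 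You instead route through Lemma~\ref{L.countable} and Lemma~\ref{L.ultra} of \S\ref{S.gp_prin}: the image $B_k$ of the offending factor is a homomorphic image of a nonprincipal countable ultraproduct, so each of its elements lies in a homomorphic image of $\Z^\omega/\U',$ which \eqref{d.0_if} forces to be zero in a slender group, whence $B_k=0$ and the factor can be dropped. This is a legitimate and clean modularization, though it is at bottom the same mathematics: the map $\gamma$ in the proof of Lemma~\ref{L.ultra} (in additive notation, $(d_n)\mapsto(d_nx_n))$ and the partition in Lemma~\ref{L.countable} are exactly the ingredients the paper assembles directly. Your contrapositive organization also merges the paper's two cases (no factorization at all; factorization with a non-countably-complete factor) into one argument, at the cost of invoking explicitly the observation from \S\ref{S.gp_ultra} that when the kernel filter $\F=\{S\mid f(\prod_{i\in I-S}A_i)=0\}$ equals $\U_0\cap\dots\cap\U_{n-1}$ for distinct ultrafilters, $f$ does factor through $\prod_k\,\prod_{i\in I}A_i/\U_k;$ you use this correctly. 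Two minor points: your torsion/torsion-free case split in \eqref{d.not_slender}$\implies$\eqref{d.ab_not_fin_n} is unnecessary, since the remark after Definition~\ref{D.ab} shows a non-slender $B$ always admits a map $\Z^\omega\to B$ with infinitely many $\delta_n$ surviving (the torsion construction there is already precomposed with $\Z^\omega\to(\Z/p\Z)^\omega);$ and your analysis of ultrafilters on $\{*\}\cup I$ in the surjectivity reduction supplies a verification the paper leaves implicit.
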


\begin{proof}
We start with the final sentence.
\eqref{d.ab_not_fin_n} and~\eqref{d.ab_not_fin_U} certainly imply
the corresponding statements without the condition of surjectivity.
Conversely (as noted in~\S\ref{S.intro}),
if we have an example of either of those conditions
minus the surjectivity restriction, we can get one satisfying that
condition by passing from the given
map $\prod_{i\in I} A_i\to B$ to the obvious surjective map
$B\times \prod_{i\in I} A_i\to B.$

Let us now show that
\eqref{d.not_slender}$\!\implies\!$\eqref{d.ab_not_fin_n}%
$\!\implies\!$\eqref{d.ab_not_fin_U}$\!\implies\!$\eqref{d.not_slender}.

Given~\eqref{d.not_slender}, take a map $f:\Z^\omega\to B$ witnessing
the failure of slenderness, i.e., carrying infinitely many
of the $\delta_n$ to nonzero values.
If $f$ factored through a product of finitely many ultrapowers,
$\Z^\omega/\U_0\times\dots\times\Z^\omega/\U_{m-1},$
then the only elements
$\delta_n$ which could have nonzero image under $f$ would be those
such that one of the $\U_k$ was the principal ultrafilter generated
by $\{n\},$ of which there can be at most finitely many.
So there is no such factorization,
so $f$ witnesses~\eqref{d.ab_not_fin_n}
(in its version without the hypothesis of surjectivity).

Clearly,~\eqref{d.ab_not_fin_n}$\!\implies\!$\eqref{d.ab_not_fin_U}.

Given $f$ as in~\eqref{d.ab_not_fin_U}, we shall
prove~\eqref{d.not_slender} by considering two cases.
First suppose that $f$ can be factored through a product
of finitely many ultraproducts
$\prod_{i\in I} A_i/\U_0\times\dots\times\prod_{i\in I} A_i/\U_{m-1},$
but that not all the $\U_k$ can be taken countably complete.
Note that $f$ is the {\em sum} of homomorphisms $f_k$
$(k=0,\dots,m-1)$ that factor
through the respective ultraproducts $\prod_{i\in I} A_i/\U_k,$ and
we can drop from this sum, and hence from our
factorization, any factors $\prod_{i\in I} A_i/\U_k$
such that $f_k$ is zero.
Hence for some $k$ with $\U_k$ {\em not} countably complete, we must
have a {\em nonzero} homomorphism $f_k:\prod_{i\in I} A_i/\U_k\to B.$
The statement that $\U_k$ is not countably complete is equivalent
to saying that
there exists a partition $I=J_0\cup\ldots\cup J_n\cup\dots$
such that none of the $J_n$ lie in $\U_k;$ in other words,
such that for each $n,$ $f_k\,|\prod_{i\in J_n} A_i=0.$
If we regard $f_k$ as a map
$\prod_{n\in\omega}(\prod_{i\in J_n} A_i)\to B,$
the fact that it is nonzero means that we can choose an element
$(x_n)_{n\in\omega}\in\prod_{n\in\omega}(\prod_{i\in J_n} A_i)$
which $f_k$ sends to a nonzero element of $B,$
though we know that it takes each $x_n$ to $0.$
Using this element $(x_n)_{n\in\omega},$
let us construct a map $\Z^\omega\to B$
by taking each $(d_n)_{n\in\omega}\in\Z^\omega$ to
$(d_n x_n)_{n\in\omega},$ and applying $f_k$ to this $\!\omega\!$-tuple.
This gives a homomorphism $\Z^\omega\to B$
which is zero on each $\delta_n,$ but not on $(1,\dots,1,\dots).$
Thus, by~\eqref{d.0_if}, $B$ is not slender.
(Alternatively,
we can get a direct contradiction to the definition of slenderness
by choosing $f_k$ and $(x_n)_{n\in\omega}$ as above, and
mapping $(d_n)_{n\in\omega}\in\Z^\omega$ to
$f_k((\sum_{m<n}d_m) x_n)_{n\in\omega}\in B.)$

There remains the case where $f$ cannot be factored through any
product of finitely many ultraproducts of the $A_i.$
Then the filter $\F$ of subsets $S\subseteq I$ such that
$f$ can be factored through $\prod_{i\in S} A_i$
is not a finite intersection of ultrafilters, so
by~\eqref{d.cap_ultra} there
exists a partition $I=J_0\cup\ldots\cup J_n\cup\dots$
such that $I-J_n\notin\F$ for all $n;$ in other words,
such that each $\prod_{i\in J_n} A_i\subseteq\prod_{i\in I} A_i$
has nonzero image under $f.$
Choosing an $x_n$ in each $\prod_{i\in J_n} A_i$ with nonzero image,
we construct as in the preceding case a homomorphism $\Z^\omega\to B.$
This time, that homomorphism will be nonzero
on every $\delta_n,$ showing that $B$ does not satisfy
the definition of slenderness.
\end{proof}

Slender abelian groups have been precisely characterized
(\cite{Nunke}, \cite[Proposition~95.2]{Fuchs2}):
they are the abelian groups which have no torsion elements,
and contain no embedded copies of either $\Q,$ or the group
of $\!p\!$-adic integers for any prime $p,$ or $\Z^\omega.$

In the statement of the above proposition, note that
condition~\eqref{d.ab_not_fin_U} is formally weaker
than~\eqref{d.ab_not_fin_n} in two ways: it allows
an arbitrary index set $I,$ and it excludes factorization
only through {\em countably complete} ultraproducts (which in the
context $I=\omega$ of~\eqref{d.ab_not_fin_n} would mean
principal ultraproducts, i.e., the given groups $A_n).$
Since~\eqref{d.ab_not_fin_U} and~\eqref{d.ab_not_fin_n} are
equivalent, they are also equivalent to two intermediate
conditions: the one obtained from~\eqref{d.ab_not_fin_n} by
replacing ``ultraproducts (principal or nonprincipal)'' by
``principal ultraproducts'', and the one obtained
from~\eqref{d.ab_not_fin_U} by deleting the
words ``countably complete''.

We can deduce from these observations that
the two sorts of conditions on an abelian group
$B$ that we are interested in -- namely,
(a)~that maps to $B$ from infinite direct products factor through
finitely many ultraproducts, and (b)~that in the case of
a countable product, if we have a such a factorization, the ultrafilters
involved are all principal -- are not independent;
precisely, that (a)~implies~(b).
Indeed, (a) is equivalent to the negation of
the version of~\eqref{d.ab_not_fin_U} without the ``countably
complete'' condition, which by the above observations is
equivalent to the negation of the version of~\eqref{d.ab_not_fin_n}
in which the ultrafilters are assumed principal, i.e., the
statement that every homomorphism from a countable product into
$B$ factors through finitely many $A_n,$ which clearly entails~(b).
Bringing in~\eqref{d.not_slender}, we see that
both~(a) and (a)$\wedge$(b) are equivalent to slenderness.

On the other hand, the three cases not
excluded by the implication (a)$\!\implies\!$(b) all do occur.
Slender groups, such as $\Z,$ satisfy both~(a) and~(b).
An infinite direct product of nontrivial slender groups,
e.g., $\Z^\omega,$ satisfies~(b) but not~(a).
Finally, any nonprincipal countable
ultraproduct of nontrivial abelian groups will not
satisfy~(b), hence, since (a)$\!\implies\!$(b), it will satisfy neither.

Having characterized the abelian groups $B$ that satisfy~(a),
it remains to characterize the larger class satisfying~(b).
As preparation, we shall first study the abelian groups that
are homomorphic images of a {\em single}
nonprincipal countable ultraproduct.
We will need a few more definitions from the theory of
infinite abelian groups.

\begin{definition}[\cite{Fuchs1},~\cite{Rotman}]\label{D.alg_cp,cotor}
A subgroup $B$ of an abelian group $A$ is called {\em pure} if
for every positive integer $n,$ $B\cap nA=nB.$

An abelian group $B$ is said to be {\em algebraically compact} if
for every overgroup $A\supseteq B$ in which $B$ is pure, $B$ is
a direct summand in $A;$ equivalently \cite[Theorem~38.1]{Fuchs1},
if for every set $X$ of group equations in constants
from $B$ and $\!B\!$-valued variables, such that every finite subset
of $X$ has a solution in $B,$ the whole set $X$ has a solution in $B.$

An abelian group $B$ is said to be {\em cotorsion} if
for every overgroup $A\supseteq B$ such that $A/B$ is torsion-free
\textup{(}a stronger condition than $B$ being pure
in $A),$ $B$ is a direct summand in $A.$
\end{definition}

Of the two definitions of algebraic compactness quoted above,
the first is the one commonly used.
I include the second because it motivates the name of the condition.
The theorem
cited for their equivalence establishes several other diverse
conditions as also being equivalent to algebraic compactness; below,
I shall pull these out of a hat as needed.

The cotorsion abelian groups clearly include the
algebraically compact abelian groups.
In fact, they are precisely the {\em homomorphic images} of such groups
\cite[Proposition~54.1]{Fuchs1}, a fact called on
in condition~\eqref{d.cotorsion} in the next result.

\begin{proposition}\label{P.cotor}
For $B$ an abelian group, the following conditions are equivalent.
\begin{equation}\begin{minipage}[c]{35pc}\label{d.ab_im_ultra}
There exists a family of abelian groups $(A_i)_{i\in I}$ and
a non-countably-complete ultrafilter $\U$ on $I$
such that $B$ is a homomorphic image of the ultraproduct
$\prod_{i\in I} A_i/\U.$
\end{minipage}\end{equation}
\begin{equation}\begin{minipage}[c]{35pc}\label{d.ab_im_ctbl_ultra}
There exists a countable family of abelian groups $(A_n)_{n\in\omega}$
and a nonprincipal ultrafilter $\U$ on $\omega$
such that $B$ is a homomorphic image of the ultraproduct
$\prod_{n\in\omega} A_n/\U.$
\end{minipage}\end{equation}
\begin{equation}\begin{minipage}[c]{35pc}\label{d.ab_im_redpr}
There exists a countable family of abelian groups $(A_n)_{n\in\omega}$
and a filter $\F$ on $\omega$ which is not contained in any
principal ultrafilter \textup{(}i.e., which
satisfies $\bigcap_{S\in\F} S=\emptyset),$ such
that $B$ is a homomorphic image of $\prod_{n\in\omega} A_n/\F.$
\end{minipage}\end{equation}
\begin{equation}\begin{minipage}[c]{35pc}\label{d.ab_im_frechet}
There exists a countable family of abelian groups $(A_n)_{n\in\omega}$
such that $B$ is a homomorphic image of the reduced product
$(\prod_{n\in\omega} A_n)/\bigoplus_{n\in\omega} A_n.$
\end{minipage}\end{equation}
\begin{equation}\begin{minipage}[c]{35pc}\label{d.ab_im_cp}
$B$ is a homomorphic image of an abelian group $C$ admitting
a compact Hausdorff group topology.
\end{minipage}\end{equation}
\begin{equation}\begin{minipage}[c]{35pc}\label{d.cotorsion}
$B$ is a cotorsion abelian group; i.e., a homomorphic image of an
algebraically compact abelian group.
\end{minipage}\end{equation}
\end{proposition}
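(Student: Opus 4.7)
The plan is to establish the cycle \eqref{d.ab_im_ultra}$\!\iff\!$\eqref{d.ab_im_ctbl_ultra}$\!\implies\!$\eqref{d.ab_im_redpr}$\!\implies\!$\eqref{d.ab_im_frechet}$\!\implies\!$\eqref{d.cotorsion}$\!\implies\!$\eqref{d.ab_im_cp}$\!\implies\!$\eqref{d.ab_im_ultra}. Several links are essentially formal: \eqref{d.ab_im_ultra}$\!\iff\!$\eqref{d.ab_im_ctbl_ultra} is Lemma~\ref{L.countable} applied to the variety of abelian groups; \eqref{d.ab_im_ctbl_ultra}$\!\implies\!$\eqref{d.ab_im_redpr} is immediate upon setting $\F=\U,$ since a nonprincipal ultrafilter on $\omega$ has empty intersection; and \eqref{d.ab_im_cp}$\!\implies\!$\eqref{d.ab_im_ultra} follows from Proposition~\ref{P.compact}, which presents any compact Hausdorff abelian group $C$ surjecting onto $B$ as a quotient of $C^\omega/\U$ for any nonprincipal ultrafilter $\U$ on $\omega,$ and composing with $C\to B$ yields the required presentation.

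The first substantive step is \eqref{d.ab_im_redpr}$\!\implies\!$\eqref{d.ab_im_frechet}. Starting from $\bigcap_{S\in\F}S=\emptyset,$ I extract a decreasing chain $\omega=S_0\supseteq S_1\supseteq\cdots$ in $\F$ with $\bigcap_n S_n=\emptyset,$ and partition $\omega$ by $J_n=S_n\setminus S_{n+1}.$ Regrouping $\prod_{n\in\omega}A_n$ as $\prod_{m\in\omega}B_m$ where $B_m=\prod_{n\in J_m}A_n,$ every element of $\bigoplus_m B_m$ has support in $\omega\setminus S_N$ for some $N,$ and $S_N\in\F,$ so it lies in the kernel of the composite $\prod A_n\to\prod A_n/\F\to B.$ Hence that composite factors through $\prod B_m/\bigoplus B_m,$ giving~\eqref{d.ab_im_frechet}.

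For \eqref{d.ab_im_frechet}$\!\implies\!$\eqref{d.cotorsion} I invoke the classical fact — to be proved or cited in \S\ref{S.ab_ac+cotor} — that reduced products of abelian groups modulo a countably incomplete filter (in particular the Fr\'echet reduced product $\prod A_n/\bigoplus A_n$) are algebraically compact; hence $B$ is a homomorphic image of an algebraically compact group, which is the definition of cotorsion. For \eqref{d.cotorsion}$\!\implies\!$\eqref{d.ab_im_cp} I reduce to showing that every algebraically compact abelian group is a homomorphic image of one admitting a compact Hausdorff topology, and handle this via the structure theorem: such a group is a direct sum of a divisible part and a reduced part of the form $\prod_p\hat B_p$ with each $\hat B_p$ Hausdorff in the $\!p\!$-adic topology, and each summand can be exhibited as a quotient of a sufficiently large product of compact groups such as $\Z_p$ and $\mathbb R/\mathbb Z.$

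I expect the main obstacle to be \eqref{d.cotorsion}$\!\implies\!$\eqref{d.ab_im_cp}: the structure-theoretic bookkeeping required to assemble a single compact surjection from the divisible and $\!p\!$-adic pieces of an arbitrary algebraically compact group is delicate and leans on nontrivial classical results about those groups. The other links in the cycle are either elementary filter manipulations, direct applications of Lemma~\ref{L.countable} and Proposition~\ref{P.compact}, or the single appeal to algebraic compactness of countably incomplete reduced products.
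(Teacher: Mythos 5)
Your proposal is correct, and it follows the same cycle of implications as the paper, with the same key citations at the two substantive nodes (algebraic compactness of $(\prod_n A_n)/\bigoplus_n A_n,$ which the paper gets from \cite[Corollary~42.2]{Fuchs1}, and Proposition~\ref{P.compact} for the closing step); so the comparison is mainly about two local choices. For \eqref{d.ab_im_redpr}$\!\implies\!$\eqref{d.ab_im_frechet}, your chain-and-regrouping argument works, but it is more machinery than needed: since $\bigcap_{S\in\F}S=\emptyset,$ for each $n$ some $S\in\F$ misses $n,$ so by upward closure $\omega-\{n\}\in\F;$ hence $\F$ contains the whole Fr\'echet filter $\mathcal{C},$ and $\prod_n A_n/\F$ is directly a quotient of $\prod_n A_n/\mathcal{C}=(\prod_n A_n)/\bigoplus_n A_n$ over the \emph{same} family $(A_n)$ -- this is the paper's one-line route, and indeed your own extraction of the chain $S_n=\omega-\{0,\dots,n-1\}$ is exactly this observation in disguise. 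The more consequential difference is at \eqref{d.cotorsion}$\!\implies\!$\eqref{d.ab_im_cp}, which you flag as the main obstacle and propose to handle through the structure theory of algebraically compact groups (divisible part plus $\!p\!$-adically complete pieces, each realized as a quotient of a product of compact groups). That outline can be made to work, but it amounts to re-deriving a classical theorem that the paper simply cites: by \cite[Theorem~38.1]{Fuchs1} (or \cite[Theorem~7.42]{Rotman}), an abelian group is algebraically compact if and only if it is a \emph{direct summand} of a group admitting a compact Hausdorff group topology, so every algebraically compact group -- and hence every cotorsion group, being a homomorphic image of one -- is a homomorphic image of a compact-topologizable group. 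Under that citation your ``delicate bookkeeping'' evaporates; if you do carry out the structural route, be aware that the reduced part requires completeness (not just Hausdorffness) in the $\!p\!$-adic topology, and that realizing an arbitrary divisible group as a quotient of a compact group takes an extra argument (e.g., $\mathbb{R}/\Z\cong(\Q/\Z)\oplus\Q^{(\mathfrak{c})}$ surjects onto $\Q$ and onto each $\Z(p^\infty)$). Your remaining links -- Lemma~\ref{L.countable} for \eqref{d.ab_im_ultra}$\!\iff\!$\eqref{d.ab_im_ctbl_ultra} (the paper inlines the same regrouping rather than citing), $\F=\U$ for \eqref{d.ab_im_ctbl_ultra}$\!\implies\!$\eqref{d.ab_im_redpr}, and Proposition~\ref{P.compact} with a nonprincipal $\U$ on $\omega$ for \eqref{d.ab_im_cp}$\!\implies\!$\eqref{d.ab_im_ultra} -- match the paper exactly.
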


\begin{proof}
We shall show
\eqref{d.ab_im_ultra}$\!\implies\!$%
\eqref{d.ab_im_ctbl_ultra}$\!\implies\!$%
\eqref{d.ab_im_redpr}$\!\implies\!$%
\eqref{d.ab_im_frechet}$\!\implies\!$%
\eqref{d.cotorsion}$\!\implies\!$%
\eqref{d.ab_im_cp}$\!\implies\!$%
\eqref{d.ab_im_ultra}.

In the situation of~\eqref{d.ab_im_ultra}, the fact that $\U$ is
not countably complete implies that we can find a partition
$I=\bigcup_{n\in\omega} J_n$ such that no $J_n$ belongs to $\U.$
Let us again write
$\prod_{i\in I} A_i=\prod_{n\in\omega}(\prod_{i\in J_n} A_i).$
As in the proof of Lemma~\ref{L.countable}, if we let
$\U'=\{S\subseteq\omega\mid \bigcup_{n\in S} J_n\in\U\},$
we find that $\U'$ is a nonprincipal ultrafilter on $\omega,$
yielding a factorization of the map from our
product group to our original ultraproduct as
$\prod_{i\in I} A_i\to\prod_{n\in\omega}(\prod_{i\in J_n} A_i)/\U'\to
\prod_{i\in I} A_i/\U.$
Since $B$ is a homomorphic image of $\prod_{i\in I} A_i/\U,$
it is a homomorphic image of the factoring object,
proving~\eqref{d.ab_im_ctbl_ultra}.

We get~\eqref{d.ab_im_ctbl_ultra}$\!\implies\!$\eqref{d.ab_im_redpr}
by taking $\F=\U.$

Given~\eqref{d.ab_im_redpr},
note that since the filter $\F$ on $\omega$ is not
contained in a principal ultrafilter, it contains the
complement of every singleton, hence it contains the
{\em Fr\'{e}chet} filter $\mathcal{C}$
of complements of finite sets.
So the quotient map
$\prod_{n\in\omega} A_n\to\prod_{n\in\omega} A_n/\F$
factors through $\prod_{n\in\omega} A_n/\mathcal{C}=
(\prod_{n\in\omega} A_n)/\bigoplus_{n\in\omega} A_n,$
giving~\eqref{d.ab_im_frechet}.

Given~\eqref{d.ab_im_frechet}, we call on \cite[Corollary~42.2]{Fuchs1}
which says that every group of the form
$(\prod_{n\in\omega} A_n)/\bigoplus_{n\in\omega} A_n$
is algebraically compact, yielding~\eqref{d.cotorsion}.

For the step \eqref{d.cotorsion}$\!\implies\!$\eqref{d.ab_im_cp}, we
call on \cite[Theorem~38.1]{Fuchs1} (or on \cite[Theorem~7.42]{Rotman})
which, among the equivalent conditions for an abelian group to be
algebraically compact, includes that of being a {\em direct summand}
in an abelian group that admits a compact Hausdorff group topology.
So an algebraically compact abelian group is, in particular, a
{\em homomorphic image}
of an abelian group admitting such a topology, hence so is
any homomorphic image of an algebraically compact group.

Finally, by Proposition~\ref{P.compact} above, any abelian group
$A$ admitting a compact Hausdorff group topology
can be written as a homomorphic image of its ultrapower $A^I/\U$
for any ultrafilter $\U$ on any set $I.$
So choosing a $\U$ which is not countably complete (e.g., any
nonprincipal ultrafilter on $I=\omega),$ we
get \eqref{d.ab_im_cp}$\!\implies\!$\eqref{d.ab_im_ultra}.
\end{proof}

We note that for a nonzero abelian group $B,$
the equivalent conditions of Proposition~\ref{P.cotor}
imply those of Proposition~\ref{P.slender}.
Indeed, thinking in terms of the conditions (a) and (b) that we have
been discussing, if we write (b$\!_1\!$) for the case of~(b)
where there is only a single ultraproduct involved
(i.e., the condition that if there exists a nonzero homomorphism
from an ultraproduct group $\prod_{n\in\omega} A_n/\U$
onto $B,$ then the ultrafilter $\U$ is principal), then we have
$(\r{a}){\implies}(\r{b}){\implies}(\r{b}_1),$
so $\neg(\r{b}_1){\implies}\neg(\r{a});$
moreover, we see that for $B\neq\{0\},$~\eqref{d.ab_im_ctbl_ultra}
is equivalent to $\neg(\r{b}_1),$ while we have previously
noted that the conditions of Proposition~\ref{P.slender} are equivalent
to $\neg(\r{a}).$
(Alternatively, it not hard to see directly that for $B\neq\{0\},$ an
example witnessing~\eqref{d.ab_im_ctbl_ultra}
also witnesses~\eqref{d.ab_not_fin_U}.)
Choosing the equivalent conditions of the two propositions that
have standard names, these observations say that for $B\neq\{0\},$
\eqref{d.cotorsion}$\!\implies\!$\eqref{d.not_slender};
in other words, no nonzero cotorsion abelian group is slender.
Since the class of cotorsion abelian groups is closed under
homomorphic images, this in fact gives

\begin{corollary}\label{C.slender_vs_cotorsion}
No cotorsion abelian group has a nonzero slender homomorphic image.\qed
\end{corollary}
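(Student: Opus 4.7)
The plan is to argue by contradiction, leveraging exactly the implication $\eqref{d.cotorsion}\!\implies\!\eqref{d.not_slender}$ (for nonzero $B$) that was established in the paragraph immediately preceding the corollary. Suppose $C$ is a cotorsion abelian group and $B$ is a nonzero slender homomorphic image of $C$; I will derive a contradiction by showing that $B$ itself must be cotorsion, and then invoking the just-proved non-slenderness of nonzero cotorsion groups.

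First I would check that the class of cotorsion abelian groups is closed under homomorphic images. This is immediate from the characterization of cotorsion groups as the homomorphic images of algebraically compact abelian groups, quoted in the paper just before Proposition~\ref{P.cotor} (and used as condition~\eqref{d.cotorsion} there): if $C$ is cotorsion, write $C$ as a homomorphic image of an algebraically compact $A$; then any homomorphic image $B$ of $C$ is also a homomorphic image of $A$, hence cotorsion.

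Applying this to the assumed $B$, we conclude that $B$ is cotorsion. Since $B\neq\{0\}$, the implication \eqref{d.cotorsion}$\!\implies\!$\eqref{d.not_slender} (established in the discussion preceding the corollary, via the chain $(\r{a})\!\implies\!(\r{b}_1)$ and the equivalences of Propositions~\ref{P.slender} and~\ref{P.cotor}) tells us that $B$ is not slender, contradicting our choice of $B$.

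There is no real obstacle here; the content of the corollary is essentially a bookkeeping step on top of the two propositions. The only thing one has to be a bit careful about is making sure the ``cotorsion is closed under homomorphic images'' step is pulled directly from the image-of-algebraically-compact description, rather than from the extension-theoretic definition (where closure under quotients is not transparent). Once that is in hand, the corollary follows in one line from the previously noted fact that no nonzero cotorsion abelian group is slender.
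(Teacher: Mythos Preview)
Your argument is correct and is essentially the same as the paper's: the paper deduces the corollary immediately from the two facts established just before it, namely that no nonzero cotorsion abelian group is slender, and that the class of cotorsion abelian groups is closed under homomorphic images (via the ``homomorphic image of an algebraically compact group'' description). Your framing as a proof by contradiction is a cosmetic difference only.
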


Corollary~\ref{C.slender_vs_cotorsion}
allows us to apply Theorem~\ref{T.fin_many_i} to many variants
of the examples immediately following it.
For instance, one of those was the infinite
dihedral group, i.e., the semidirect product arising from
the natural action of $\{\pm 1\}$ on the slender group $\Z.$
I claim that we can replace $\Z$ in that example by
any abelian group $A$
without $\!2\!$-torsion that has $\Z$ as a homomorphic image;
for instance, $\Z^\omega,$ or $\Z\times\Z/n\Z$ for any odd $n.$
Indeed, taking a homomorphism from such an abelian group $A$ onto $\Z,$
and any $b\in A$ that maps to a generator of $\Z$ under that
homomorphism, we see from the above corollary that no subgroup $B$
of $A$ containing $b$ is cotorsion, equivalently,
by Proposition~\ref{P.cotor}, that no such subgroup $B$
satisfies~\eqref{d.ab_im_ctbl_ultra}; hence $b$ satisfies
the condition of the second paragraph of Theorem~\ref{T.fin_many_i}.
(The assumption that $A$ has no $\!2\!$-torsion keeps the center of the
semidirect product trivial, to avoid complicating our considerations.)
In \S\ref{S.ab_ac+cotor} we will obtain
more information on which abelian groups are cotorsion.

We can now answer the question of which abelian groups $B$
have the property we called~(b) in our earlier discussion,
namely, that any map $f$ from a countable direct product
of abelian groups $A_n$ onto $B$ which factors through finitely many
ultrafilters in fact factors through the projection
to the product of finitely many of the $A_n.$
We shall see that this is true if and only if $B$ contains
no nontrivial cotorsion subgroup.
Although the class of cotorsion abelian groups is
difficult to describe
exactly, a simple criterion is known for an abelian group to be
{\em cotorsion-free}, i.e., to contain no nontrivial cotorsion subgroup:
It is that the group be torsion-free, and contain no copy
of the additive group of $\Q,$ nor of the $\!p\!$-adic integers
for any prime $p$ \cite[Theorem~2.4 (1)$\!\implies\!$(4)]{MD+RG}.
(So it is like the condition characterizing slenderness, but
without the exclusion of subgroups isomorphic to $\Z^\omega.)$
This condition is also equivalent to that of
containing no nonzero algebraically compact subgroup:
it implies the latter because every algebraically compact group
is cotorsion, while the reverse implication holds because
$\Q,$ and the groups of $\!p\!$-adic integers, and all finite
abelian groups, are algebraically compact.
As is usual in this note, the statement below
will be the contrapositive of the version suggested by this discussion.

\begin{theorem}\label{T.not_prin}
The following conditions on an abelian group $B$ are equivalent.
\begin{equation}\begin{minipage}[c]{35pc}\label{d.ab_I_not_c_cplt}
There exist a set $I,$ a family of abelian groups $(A_i)_{i\in I},$
and a surjective homomorphism $f:\prod_{i\in I} A_i\to B$ such that $f$
factors through the product of finitely many ultraproducts
$\prod_{i\in I} A_i/\U_k,$ but does not factor through
the product of finitely many {\em countably complete} ultraproducts.
\end{minipage}\end{equation}
\begin{equation}\begin{minipage}[c]{35pc}\label{d.ab_omega_not_prin}
There exist a countable
family $(A_n)_{n\in\omega}$ of abelian groups and
a surjective homomorphism $f:\prod_{n\in \omega} A_n\to B$ such that $f$
factors through the product of finitely many ultraproducts
$\prod_{n\in\omega} A_n/\U_k,$ but does not factor through
the product of finitely many of the $A_n.$
\end{minipage}\end{equation}
\begin{equation}\begin{minipage}[c]{35pc}\label{d.ab_has_cotorsion}
$B$ has a nontrivial cotorsion subgroup; equivalently \textup{(}by
the result from \cite{MD+RG} quoted above\textup{)}, $B$ either
has nonzero elements of finite order, or contains a
copy of the additive group of $\Q,$ or contains a copy of the additive
group of the $\!p\!$-adic integers for some prime $p;$
equivalently, $B$ has a nontrivial algebraically compact subgroup.
\end{minipage}\end{equation}

These conditions are also equivalent to the variants
of~\eqref{d.ab_omega_not_prin} and~\eqref{d.ab_I_not_c_cplt}
without assumption that $f$ be surjective.
\end{theorem}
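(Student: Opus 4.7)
The plan is to prove the cycle
\eqref{d.ab_omega_not_prin}$\!\implies\!$\eqref{d.ab_I_not_c_cplt}$\!\implies\!$\eqref{d.ab_has_cotorsion}$\!\implies\!$\eqref{d.ab_omega_not_prin},
and to reduce the variants without the surjectivity hypothesis to the surjective versions by the same device used in Proposition~\ref{P.slender}: given any $f:\prod_{i\in I}A_i\to B,$ the induced map $B\times\prod_{i\in I}A_i\to B$ is surjective, and it factors through finitely many ultraproducts, through finitely many countably complete ultraproducts, or through finitely many of the original factors, if and only if $f$ itself does.

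The implication \eqref{d.ab_omega_not_prin}$\!\implies\!$\eqref{d.ab_I_not_c_cplt} is immediate on taking $I=\omega,$ since a countably complete ultrafilter on $\omega$ is necessarily principal (otherwise, containing no finite set, it would contain $\omega-\{n\}$ for every $n,$ hence also their empty intersection); so ``factor through finitely many of the $A_n$'' coincides with ``factor through finitely many countably complete ultraproducts'' in this setting.

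For \eqref{d.ab_I_not_c_cplt}$\!\implies\!$\eqref{d.ab_has_cotorsion}, given a factorization of $f$ through a product $\prod_{i\in I}A_i/\U_0\times\dots\times\prod_{i\in I}A_i/\U_{m-1},$ I use that a homomorphism from a finite direct product of abelian groups is the sum of its restrictions to the factors, writing $f=\sum_{k<m} f_k$ with each $f_k$ factoring through $\prod_{i\in I}A_i/\U_k.$ After discarding those $f_k$ that are zero, the hypothesis of~\eqref{d.ab_I_not_c_cplt} forces at least one remaining $\U_{k_0}$ to fail countable completeness. The image of the corresponding nonzero $f_{k_0}$ is then a nontrivial subgroup of $B$ which is a homomorphic image of $\prod_{i\in I}A_i/\U_{k_0};$ by \eqref{d.ab_im_ultra}$\!\iff\!$\eqref{d.cotorsion} of Proposition~\ref{P.cotor}, that subgroup is cotorsion, giving~\eqref{d.ab_has_cotorsion}.

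The main obstacle is \eqref{d.ab_has_cotorsion}$\!\implies\!$\eqref{d.ab_omega_not_prin}, where one must produce a map {\em onto} $B$ that factors through finitely many ultraproducts but not through finitely many factors. I pick a nontrivial cotorsion subgroup $C\subseteq B$ and invoke \eqref{d.cotorsion}$\!\implies\!$\eqref{d.ab_im_ctbl_ultra} of Proposition~\ref{P.cotor} to write $C$ as a homomorphic image of some $\prod_{n\in\omega}A_n/\U$ with $\U$ nonprincipal, obtaining $h:\prod_{n\in\omega}A_n\to C\subseteq B$ by composing with the canonical projection. To upgrade $h$ to a surjection onto $B$ without disturbing the essential feature that $h$ kills every element of bounded support, I enlarge the zeroth factor: set $A'_0=A_0\oplus B,$ $A'_n=A_n$ for $n\geq 1,$ and define $f:\prod_{n\in\omega}A'_n\to B$ by $f((a_0,b),a_1,a_2,\dots)=b+h(a_0,a_1,a_2,\dots).$ Then $f$ is clearly surjective, and it factors through a product of two ultraproducts: the ``$b$'' summand factors through the principal ultrafilter at $0,$ and the ``$h$'' summand inherits from $h$ a factorization through $\prod_{n}A'_n/\U.$ A putative factorization of $f$ through a finite subproduct $\prod_{n\in T}A'_n$ would, on restriction to inputs with $a'_0=(0,0)$ and support in the cofinite set $\omega-(T\cup\{0\}),$ force $h$ to vanish on the subgroup of $\prod_n A_n$ with support in $\omega-(T\cup\{0\});$ but since $\U$ is nonprincipal this cofinite set lies in $\U,$ so that subgroup already surjects onto $\prod_n A_n/\U,$ whence $h=0,$ contradicting $C\neq 0.$
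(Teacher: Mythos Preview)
Your proof is correct and follows essentially the same route as the paper's: the same cycle of implications, the same use of Proposition~\ref{P.cotor} to pass between cotorsion subgroups and non-countably-complete ultraproducts, and the same $B\times\prod_i A_i$ trick for the surjectivity reduction. The only organizational difference is that the paper first reduces to the non-surjective variants and then runs the cycle there (so \eqref{d.ab_has_cotorsion}$\!\implies\!$\eqref{d.ab_omega_not_prin} needs only a nonzero map \emph{into} $B$), whereas you run the cycle directly on the surjective versions and therefore build the surjection explicitly by absorbing $B$ into $A'_0$; this is the same device, just deployed inside the implication rather than as a preliminary lemma.
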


\begin{proof}
The equivalence
of~\eqref{d.ab_I_not_c_cplt} and~\eqref{d.ab_omega_not_prin}
to the corresponding conditions without the assumption of
surjectivity is seen as in the first paragraph of the
proof of Proposition~\ref{P.slender}.
We shall use those variants
in proving \eqref{d.ab_has_cotorsion}$\!\implies\!$%
\eqref{d.ab_omega_not_prin}$\!\implies\!$%
\eqref{d.ab_I_not_c_cplt}$\!\implies\!$%
\eqref{d.ab_has_cotorsion}.

Assuming~\eqref{d.ab_has_cotorsion}, let $C\subseteq B$ be a
nonzero cotorsion subgroup.
Then our earlier result
\eqref{d.cotorsion}$\!\implies\!$\eqref{d.ab_im_ctbl_ultra}
gives a surjective homomorphism $\prod_{n\in\omega} A_n/\U\to C$
for a family of abelian groups $A_n$ and a nonprincipal
ultrafilter $\U,$ which we regard as a nonzero homomorphism into $B.$
Since $\U$ is not principal, $f$ annihilates each of the $A_n,$
so it cannot be factored through the product of finitely many
of these, giving~\eqref{d.ab_omega_not_prin}.

Clearly,
\eqref{d.ab_omega_not_prin}$\!\implies\!$\eqref{d.ab_I_not_c_cplt},
since the countably complete ultrafilters on $\omega$ are the
principal ultrafilters.

Assuming~\eqref{d.ab_I_not_c_cplt}, let $f:\prod_{i\in I} A_i\to B$
be a homomorphism that factors through a product of ultraproducts
$\prod_{i\in I} A_i/\U_0\times\dots\times\prod_{i\in I} A_i/\U_{m-1},$
but not through such a product in which all the
$\U_k$ are countably complete.
As noted in the proof of Proposition~\ref{P.slender},
the given factorization is equivalent to an expression of $f$
as the sum of maps that factor
$\prod_{i\in I} A_i\to\prod_{i\in I} A_i/\U_k\to B,$ and
if any of these maps are zero,
we can drop them, leaving a factorization with all these maps
nonzero, and which, by choice of $f$ must
still have at least one with $\U_k$ not countably complete.
So there exists a nonzero map $g:\prod_{i\in I} A_i/\U\to B$ for
some non-countably-complete ultrafilter $\U$ on $I.$
Our earlier result
\eqref{d.ab_im_ultra}$\!\implies\!$\eqref{d.cotorsion}
now tells us that the nonzero image of $g$ is a
cotorsion submodule of $B,$ proving~\eqref{d.ab_has_cotorsion}.
\end{proof}

We have not yet said much about algebraically compact groups, except
that the cotorsion groups are their homomorphic images.
We record

\begin{lemma}\label{L.ab_ultra_retr}
The following conditions on an abelian group $B$ are equivalent.
\begin{equation}\begin{minipage}[c]{35pc}\label{d.ab_oA_retr}
For every proper filter $\F$ on a nonempty set $I,$ the
natural embedding $B\to B^I/\F$ has a left inverse.
\end{minipage}\end{equation}
\begin{equation}\begin{minipage}[c]{35pc}\label{d.ab_oE_retr}
There exists a nonprincipal ultrafilter $\U$ on $\omega$
such that the natural embedding $B\to B^\omega/\U$ has a left inverse.
\end{minipage}\end{equation}
\begin{equation}\begin{minipage}[c]{35pc}\label{d.ab_alg_cp}
$B$ is algebraically compact.
\end{minipage}\end{equation}
\end{lemma}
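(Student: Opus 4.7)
The plan is to prove the cycle $\eqref{d.ab_oA_retr} \Rightarrow \eqref{d.ab_oE_retr} \Rightarrow \eqref{d.ab_alg_cp} \Rightarrow \eqref{d.ab_oA_retr}$. The first implication is immediate: a nonprincipal ultrafilter on $\omega$ is, in particular, a proper filter on a nonempty set.

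For $\eqref{d.ab_alg_cp} \Rightarrow \eqref{d.ab_oA_retr}$, I would first verify that for any proper filter $\F$ on a nonempty set $I$, the composite $B \to B^I \to B^I/\F$ is a \emph{pure} embedding, after which the defining property of algebraic compactness (Definition~\ref{D.alg_cp,cotor}) supplies the left inverse. Injectivity holds because for nonzero $b \in B$, the set $\{i \in I \mid b = 0\} = \emptyset$ does not belong to the proper filter $\F$. Purity amounts to $B \cap n(B^I/\F) = nB$ for every positive integer $n$: given $c \in B^I/\F$ with $nc = b$ for some $b \in B$, lift $c$ to $(c_i) \in B^I$; the set $S = \{i : nc_i = b\}$ lies in $\F$ and hence is nonempty, so any $i \in S$ yields $b = nc_i \in nB$.

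For $\eqref{d.ab_oE_retr} \Rightarrow \eqref{d.ab_alg_cp}$, the key observation is that because $\U$ is a nonprincipal ultrafilter on $\omega$, every cofinite subset of $\omega$ lies in $\U$; hence $\bigoplus_{n \in \omega} B \subseteq \ker(B^\omega \to B^\omega/\U)$, so the quotient map factors through $B^\omega / \bigoplus_{n \in \omega} B$. Composing the hypothesized left inverse $\rho : B^\omega/\U \to B$ with the intermediate map $B^\omega/\bigoplus_{n \in \omega} B \to B^\omega/\U$ produces a left inverse to the diagonal embedding $B \to B^\omega/\bigoplus_{n \in \omega} B$; that is, $B$ is a direct summand of $B^\omega/\bigoplus_{n \in \omega} B$. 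By \cite[Corollary~42.2]{Fuchs1} (already called on in the proof of Proposition~\ref{P.cotor}), the group $B^\omega/\bigoplus_{n \in \omega} B$ is algebraically compact, and direct summands of algebraically compact abelian groups are algebraically compact -- apply the retraction to pull a solution of a finitely satisfiable equation system in the overgroup back to a solution in $B$, using the equational characterization in Definition~\ref{D.alg_cp,cotor}. Hence $B$ is algebraically compact.

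The main obstacle I anticipate is the last implication, where the hypothesis gives only a retraction from a single ultrapower with respect to a single ultrafilter, whereas algebraic compactness is a much stronger closure property. The trick that unlocks the step is exploiting the nonprincipality of $\U$ to route the retraction through the algebraically compact object $B^\omega/\bigoplus_{n \in \omega} B$, for which the direct-summand criterion is already in hand.
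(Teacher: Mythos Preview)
Your proof is correct. The cycle you run and the argument for
$\eqref{d.ab_alg_cp}\Rightarrow\eqref{d.ab_oA_retr}$ match the paper's
(the paper merely asserts purity of $B\hookrightarrow B^I/\F$ without
spelling it out as you do). The difference lies in
$\eqref{d.ab_oE_retr}\Rightarrow\eqref{d.ab_alg_cp}$: the paper invokes
a result of Eklof that any nonprincipal countable ultrapower
$B^\omega/\U$ is itself algebraically compact, so the hypothesized
retraction immediately exhibits $B$ as a direct summand of an
algebraically compact group. You instead observe that the quotient
$B^\omega\to B^\omega/\U$ factors through
$B^\omega/\bigoplus_\omega B$, and so the given retraction yields a
retraction of $B^\omega/\bigoplus_\omega B$ onto $B$; then you cite
\cite[Corollary~42.2]{Fuchs1} (already used in
Proposition~\ref{P.cotor}) for the algebraic compactness of
$B^\omega/\bigoplus_\omega B$. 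Both arguments finish the same way
(direct summands of algebraically compact groups are algebraically
compact), but yours has the mild advantage of staying within the
references the paper has already called on, whereas the paper's
version is one composition shorter.
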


\begin{proof}
For any filter $\F$ on a set $I,$
the natural embedding $B\to B^I/\F$ is easily seen to be pure,
so the definition of algebraic compactness
gives \eqref{d.ab_alg_cp}$\!\implies\!$\eqref{d.ab_oA_retr}.
Clearly, \eqref{d.ab_oA_retr}$\!\implies\!$\eqref{d.ab_oE_retr}.

To show that \eqref{d.ab_oE_retr}$\!\implies\!$\eqref{d.ab_alg_cp},
we use the result \cite[third sentence of \S2]{Eklof}, that a
nonprincipal countable ultrapower of any abelian group $B$
is algebraically compact.
Hence~\eqref{d.ab_oE_retr} implies that $B$ is a direct
summand in an algebraically compact abelian group, from which one
easily sees that it itself is algebraically compact.
\end{proof}

Since the cotorsion abelian groups are the homomorphic images of the
algebraically compact ones, the above result shows that the analog
of Question~\ref{Q.B,B'} has a positive answer for abelian groups.
(This can also be seen from the proof of Proposition~\ref{P.cotor},
where the closing step
\eqref{d.ab_im_cp}$\!\implies\!$\eqref{d.ab_im_ultra}
allows us to choose $\U$ essentially arbitrarily.)

Another interesting necessary and sufficient
condition for $B$ to be algebraically compact, obtained
(in the more general context of modules) as \cite[Theorem~7.1(vi)]{J+L},
is that for every
set $I,$ the summation map $\bigoplus_{i\in I} B\to B$ extend
to a map $B^I\to B.$

\section{More on algebraically compact and cotorsion abelian groups}\label{S.ab_ac+cotor}

The distinction between the class of cotorsion abelian groups and
its subclass, the algebraically compact abelian groups, is a subtle one.
It follows from the
definitions that every cotorsion abelian group $B$ that is torsion-free
is algebraically compact \cite[Corollary~54.5]{Fuchs1}.
The only example I have found in the literature of a
cotorsion abelian group that is not algebraically compact,
that of \cite[Proposition~7.48(ii)]{Rotman}, is
described as an $\r{Ext}$ of other groups, rather than explicitly.
(It is known that for any abelian groups $A$ and $A',$
$\r{Ext}(A,A')$ is cotorsion
\cite[Theorem~54.6]{Fuchs1}, \cite[Corollary~7.47]{Rotman}.)
Let us begin this section by constructing a more explicit example.

We will use the characterization of an algebraically compact
abelian group as an abelian group $B$ such that
whenever a system of equations has the property that all its
finite subsystems have solutions in $B,$
then the whole system has such a solution.
An easy example of an infinite system of
equations is the following, where $x_0$ is a
given element of $B,$ and $x_1,\dots,x_n,\dots$ are to be found.
\begin{equation}\begin{minipage}[c]{35pc}\label{d.ab_x0...}
$x_0=p\,x_1,\quad x_1=p\,x_2,\quad\dots,\quad
x_{n-1}=p\,x_n,\quad\dots\,.$
\end{minipage}\end{equation}

The necessary condition for algebraic compactness that
this system yields is

\begin{lemma}\label{L.ab_divisible}
If $B$ is an algebraically compact group and $p$ a prime,
then the subgroup $B'=\bigcap_{n\in\omega} p^n\,B\subseteq B$ is
$\!p\!$-divisible, i.e., satisfies $p\,B'=B'.$
\end{lemma}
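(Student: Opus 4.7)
The plan is to apply the ``finite-solvability implies solvability'' characterization of algebraic compactness directly to the system~\eqref{d.ab_x0...}, with $x_0$ taken to be an arbitrary element of $B'$.

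First I would observe that the inclusion $pB' \subseteq B'$ is immediate (multiplication by $p$ carries $\bigcap_n p^nB$ into itself), so the content is the reverse inclusion. To establish $B' \subseteq pB'$, I fix $x_0 \in B'$ and consider the countably infinite system~\eqref{d.ab_x0...} in unknowns $x_1, x_2, \dots$, treating $x_0$ as a constant from $B$.

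Next I would verify that every finite subsystem has a solution in $B$. Any such subsystem is contained in one of the form $\{x_{k-1} = p x_k : 1 \le k \le N\}$. Since $x_0 \in B' \subseteq p^N B$, there is some $w \in B$ with $x_0 = p^N w$; setting $x_k = p^{N-k} w$ for $1 \le k \le N$ solves this finite subsystem. By algebraic compactness (Definition~\ref{D.alg_cp,cotor}), the entire system~\eqref{d.ab_x0...} has a solution $(x_1, x_2, \dots)$ in $B$.

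Finally, I would extract the conclusion. From $x_0 = p x_1$ we see $x_0 \in pB$ with witness $x_1$; and from $x_n = p x_{n+1}$ iterated, $x_1 = p^{n} x_{n+1} \in p^n B$ for every $n \in \omega$, so $x_1 \in \bigcap_{n\in\omega} p^n B = B'$. Hence $x_0 = p x_1 \in p B'$, as required. There is no real obstacle here: the argument is essentially the observation that the ``height sequence'' of $x_0$ can be realized coherently, which is exactly the kind of infinite coherence that algebraic compactness was designed to guarantee. The only point to be careful about is writing~\eqref{d.ab_x0...} as a system of group equations with constants from $B$ (the constant being $x_0$) and $B$-valued variables, so that the second formulation in Definition~\ref{D.alg_cp,cotor} applies verbatim.
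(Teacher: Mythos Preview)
Your proof is correct and follows essentially the same route as the paper's: fix $x_0\in B'$, verify finite solvability of the system~\eqref{d.ab_x0...} by choosing a single $p^N$-th root $w$ of $x_0$ and setting $x_k=p^{N-k}w$, invoke algebraic compactness for a global solution, and observe that the resulting $x_1$ lies in $B'$ because $x_1=p^n x_{n+1}$ for every $n$. The paper's write-up is slightly terser (it leaves the verification that $x_1\in B'$ as an observation), but the argument is the same.
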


\begin{proof}
Suppose $x_0\in B'.$
Let us fix $n\geq 0,$ and choose $x_n\in B$ such that
$x_0=p^n x_n.$
If we now let $x_m=p^{n-m} x_n$ for $0<m<n,$ we see that
$x_0,\dots,x_n$ satisfy the first $n$ equations of~\eqref{d.ab_x0...}.
Since we can do this for any $n,$
every finite subfamily of~\eqref{d.ab_x0...} has a solution, so
algebraic compactness implies that we can choose
$x_1,\dots,x_n,\dots$ satisfying the full set of equations.
For such $x_1,\dots,x_n,\dots$ we see that
$x_1$ also belongs to $B';$ so $x_0\in p\,B',$ as required.
\end{proof}

(It is also not hard to prove the above lemma from the
definition of algebraic compactness in terms of pure extensions:
Given algebraically compact $B,$ and $x_0\in B',$
let $B^+$ be the extension of $B$ gotten by adjoining new generators
$x_1,\dots,x_n,\dots$ and the relations~\eqref{d.ab_x0...}.
It is straightforward to show that $B$ embeds in $B^+,$ and
from the fact that $x_0\in B',$
one can deduce that $B$ is pure in $B^+.$
Hence the definition of algebraic compactness says that there
exists a retraction of $B^+$ onto $B,$ i.e., a solution
to~\eqref{d.ab_x0...} in $B;$ hence, as above, $x_0=p\,x_1\in p\,B'.)$

So let us try to construct a cotorsion abelian group $B$
with an element that we force to lie in $B',$ without
creating any apparent reason why it should lie in $p\,B'.$
To do this, let $\Z_p$ denote the additive group
of $\!p\!$-adic integers, which is algebraically compact
by Proposition~\ref{P.compact} and Lemma~\ref{L.ab_ultra_retr};
within its countable power $\Z_p^\omega,$ let $\delta_n$ be,
as usual, the element with $1$ in the $\!n\!$-th coordinate and $0$
in all others; and for a first try, let $B$ be the factor group of
$\Z_p^\omega$ by the subgroup generated by the elements
\begin{equation}\begin{minipage}[c]{35pc}\label{d.*d0=p^n*dn}
$\delta_0-p^n\,\delta_n$ $(n\in\omega).$
\end{minipage}\end{equation}
Letting $x$ be the image of $\delta_0$ in $B,$ we
clearly have $x\in B'.$

But this group is messy, making it hard to see whether
some $y\in B'$ might satisfy $x=p\,y.$
It becomes nicer if we impose~\eqref{d.*d0=p^n*dn} as $\!\Z_p\!$-module
relations rather than just as additive group relations.
If we then change coordinates in $\Z_p^\omega,$ so that
the elements $\delta_n-p\,\delta_{n+1}$ become the new $\delta_n$
(namely, we map $(a_n)_{n\in\omega}$ to
$(\sum_{m\leq n} p^{n-m} a_m)_{n\in\omega}),$
the resulting construction takes the form shown in the next lemma.

\begin{lemma}\label{L.ab_cotor_not_cp}
Let $p$ be a prime number, and $B$ the group
$\Z_p^\omega/\bigoplus_{n\in\omega} p^n\,\Z_p.$
Then $B$ is cotorsion, but fails to satisfy the conclusion
of Lemma~\ref{L.ab_divisible}; hence $B$ is not algebraically compact.
\end{lemma}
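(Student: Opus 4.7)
The cotorsion half follows immediately from Proposition~\ref{P.cotor}: since $\Z_p$ carries its natural compact Hausdorff topology, so does the countable power $\Z_p^\omega$; hence $B=\Z_p^\omega/\bigoplus_n p^n\Z_p$ is a homomorphic image of a compact Hausdorff abelian group, i.e.\ satisfies~\eqref{d.ab_im_cp}, so it satisfies~\eqref{d.cotorsion} and is cotorsion.

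For the remaining (main) half, I would exhibit a specific element $x\in B':=\bigcap_n p^n B$ that fails to lie in $pB'$. The natural candidate, motivated by the construction in the paragraph preceding the lemma, is the image in $B$ of $(p^k)_{k\in\omega}=(1,p,p^2,\ldots)\in\Z_p^\omega$. Since this sequence has infinite support, it does not lie in $\bigoplus_n p^n\Z_p$, so $x\neq 0$. To verify that $x\in p^n B$ for every $n$, let $v=(v_k)\in\Z_p^\omega$ be given by $v_k=p^{k-n}$ for $k\ge n$ and $v_k=0$ for $k<n$; then $p^n v-(p^k)_k$ has zero $k$-th entry for $k\ge n$ and $k$-th entry $-p^k\in p^k\Z_p$ for $k<n$, hence is a finite-support element of $\bigoplus_n p^n\Z_p$, as required.

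The heart of the argument is showing $x\notin pB'$. Suppose for contradiction that $x=py$ for some $y\in B'$, and fix a representative $(c_k)\in\Z_p^\omega$ of $y$; the equation $py=x$ then reads $pc_k-p^k=\gamma_k$ for some $(\gamma_k)\in\bigoplus_n p^n\Z_p$. Using torsion-freeness of $\Z_p$ one deduces $c_k=p^{k-1}+\beta_k$ for $k\ge 1$, where $\beta_k\in p^{k-1}\Z_p$ and $\beta_k=0$ for all but finitely many $k$. Now $y\in p^M B$ is equivalent to the existence of $(d_k)\in\Z_p^\omega$ with $c_k-p^M d_k\in\bigoplus_n p^n\Z_p$; for each index $k\le M$ the subgroup membership forces $c_k\in p^k\Z_p$, because $p^M d_k$ already lies in $p^k\Z_p$. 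But for any $k$ with $\beta_k=0$ this would require $p^{k-1}\in p^k\Z_p$, which is false. Taking $M$ larger than the largest index in the finite set $\{k\ge 1\mid\beta_k\neq 0\}$ therefore contradicts $y\in p^M B$, so $y\notin B'$ and hence $x\notin pB'$, violating the conclusion of Lemma~\ref{L.ab_divisible}.

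The principal obstacle I anticipate is keeping straight the distinction between intrinsic properties of $y\in B$ and properties of a chosen representative $(c_k)$: the key point is that changes of representative $(c_k)\mapsto(c_k+\eta_k)$ with $(\eta_k)\in\bigoplus_n p^n\Z_p$ preserve the class of $c_k$ modulo $p^k\Z_p$, so the condition ``$c_k\in p^k\Z_p$'' is well-defined on $y$; beyond that, the argument is routine bookkeeping with $p$-adic valuations.
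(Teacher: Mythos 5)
Your proof is correct and takes essentially the same route as the paper's: the same element $x$ (the image of $(p^k)_{k\in\omega}),$ the same shifted sequences witnessing $x\in B'=\bigcap_n p^nB,$ and the same coordinate-wise contradiction showing that any $y$ with $py=x$ fails to lie in $p^MB$ for suitable $M$ --- your $\beta_k$ bookkeeping just makes explicit the paper's observation that the representative of $y$ must have $n\!$-th coordinate $p^{n-1}$ for all but finitely many $n.$ The cotorsion half via the compact Hausdorff topology on $\Z_p^\omega$ and Proposition~\ref{P.cotor} is likewise equivalent to the paper's one-line appeal to $B$ being a homomorphic image of an algebraically compact group.
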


\begin{proof}
As a homomorphic image of an algebraically compact group, $B$
is cotorsion.

To see the failure of the conclusion of Lemma~\ref{L.ab_divisible},
let $x\in B$ be the image of $(p^n)_{n\in\omega}\in\Z_p^\omega.$
(Note that the above coordinates $p^n$ are ``ghosts'', in the
sense that any {\em finite} set of them may, by the definition of $B,$
be changed to $0$ without changing the element $x.)$
For each $n>0,$ if we let $x_n\in B$ be the image of the element
of $\Z_p^\omega$ whose coordinate in position $m$ is $0$ for $m<n,$
and $p^{m-n}$ for $m\geq n,$ then we see that $x=p^n\,x_n.$
Hence $x\in B'.$

Now let $y$ be any element satisfying $x=p\,y.$
Writing $y$ as the image of $(a_n)_{n\in\omega}\in\Z_p^\omega,$
we see from the definition of $B$
that for all but finitely many $n$ we must have $a_n=p^{n-1}.$
(And note that coordinates with this property are not ``ghosts''!)
But for any $n$ such that this relation holds,
we can see by looking at the $\!n\!$-th
coordinate that $y\notin p^n B.$
So $y\notin B';$ and since we have shown
this for all $y$ with $x=p\,y,$ we have $x\notin p\,B'.$
Since $x\in B',$ this shows that $B'\neq p\,B'.$
\end{proof}

(L.\,Fuchs (personal communication) points out
another way to see that the above group $B$ is not
algebraically compact:  by noting that its torsion subgroup
$\bigoplus_{n\in\omega} \Z_p/p^n\,\Z_p$ is not torsion-complete,
and calling on \cite[Theorem~68.4, (ii)$\!\implies\!$(i)]{Fuchs2}.)

Note that any group $B$ which, like the one constructed above,
is cotorsion but not algebraically compact is,
by the former fact, a homomorphic image of
a nonprincipal countable ultraproduct of groups, but by
Lemma~\ref{L.ab_ultra_retr}
\eqref{d.ab_oE_retr}$\!\implies\!$\eqref{d.ab_alg_cp}, does not
admit a left inverse to a diagonal embedding $B\to B^\omega/\U,$
confirming the assertion
made in the second paragraph after Question~\ref{Q.B,B',B''}.

Let us obtain, next, some restrictions on
the class of cotorsion abelian groups.
These will allow us to deduce that many sorts
of groups are not cotorsion, and so give more
examples to which we can apply Theorem~\ref{T.fin_many_i}.
In the next lemma we combine the fact that the
cotorsion groups are the homomorphic images of the algebraically
compact groups with another of the criteria for algebraic
compactness given in \cite[Theorem~38.1]{Fuchs1},
namely, that an abelian group $C$ is algebraically compact if
and only if it is {\em pure-injective}, meaning that for any
pure subgroup $A_0$ of an abelian group $A_1,$ every homomorphism
$A_0\to C$ extends to a homomorphism $A_1\to C.$
In an earlier version of this note, I asked whether the
direction~``\eqref{d.extend}$\!\implies\!$cotorsion'' in the lemma held;
I am indebted to K.\,M.\,Rangaswamy and Manfred Dugas for
(independently) showing me why it does.

\begin{lemma}\label{L.pure}
An abelian group $B$ is cotorsion if and only if it satisfies
\begin{equation}\begin{minipage}[c]{35pc}\label{d.extend}
For every abelian group $A$ having
a pure subgroup $F$ which is free abelian,
every homomorphism $F\to B$ extends to a homomorphism $A\to B.$
\end{minipage}\end{equation}
\end{lemma}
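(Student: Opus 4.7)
The plan is to prove the two implications separately, using the characterization of algebraically compact groups as pure-injective groups (cited from \cite[Theorem~38.1]{Fuchs1}), and the characterization of cotorsion groups as homomorphic images of algebraically compact groups, together with the standard homological characterization $B$ cotorsion $\iff$ $\r{Ext}^1(T,B)=0$ for every torsion-free abelian group $T.$

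For the direction ``cotorsion $\implies$~\eqref{d.extend}'', write $B = C/D$ with $C$ algebraically compact, and let $\pi:C\to B$ be the quotient map. Given pure $F\subseteq A$ with $F$ free abelian, and a homomorphism $\varphi:F\to B,$ first lift $\varphi$ through $\pi$ to a homomorphism $\tilde\varphi:F\to C,$ which is possible because $F$ is free (choose preimages in $C$ of the images in $B$ of a free basis of $F$). Now $C,$ being algebraically compact, is pure-injective; since $F$ is pure in $A,$ this map $\tilde\varphi$ extends to a homomorphism $\tilde\Psi:A\to C.$ Then $\pi\tilde\Psi:A\to B$ is the desired extension of $\varphi.$

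For the converse ``\eqref{d.extend} $\implies$ cotorsion'', let $T$ be an arbitrary torsion-free abelian group, and choose a free resolution $0\to F'\to F\to T\to 0$ (with $F'$ free since it is a subgroup of a free abelian group). The key observation is that $F'$ is \emph{pure} in $F$: for any $n\geq 1,$ if $f\in F'\cap nF,$ say $f=nf_0$ with $f_0\in F,$ then the image of $f_0$ in $T$ is killed by $n,$ and since $T$ is torsion-free, $f_0\in F';$ thus $f\in nF'.$ Applying $\r{Hom}(-,B)$ to the resolution and using $\r{Ext}^1(F,B)=0,$ we obtain $\r{Ext}^1(T,B)\cong \r{Hom}(F',B)/\r{image}(\r{Hom}(F,B)\to\r{Hom}(F',B)).$ Condition~\eqref{d.extend}, applied to the pure inclusion $F'\subseteq F,$ says exactly that the restriction map $\r{Hom}(F,B)\to\r{Hom}(F',B)$ is surjective, so $\r{Ext}^1(T,B)=0.$ Since $T$ was arbitrary torsion-free, $B$ is cotorsion.

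The only step requiring real care is the purity check in the reverse direction, and the invocation of pure-injectivity in the forward direction; neither is an obstacle, since the former is a one-line computation using the torsion-freeness of $T,$ and the latter is a named consequence of algebraic compactness recalled just above the lemma. The main conceptual point is recognizing that condition~\eqref{d.extend} is a disguised version of the statement ``$\r{Ext}^1(T,B)=0$ for torsion-free $T$'', with free resolutions providing the bridge.
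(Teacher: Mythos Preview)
Your forward direction is identical to the paper's.

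Your reverse direction is correct but follows a genuinely different route. The paper argues constructively: it writes $B$ as a quotient of a free abelian group $F,$ embeds $F$ purely in a compact abelian group $A$ (explicitly, $A=\hat{\Z}^I$ where $F=\bigoplus_I\Z$), applies~\eqref{d.extend} to get a surjection $A\to B,$ and then invokes Proposition~\ref{P.cotor}, \eqref{d.ab_im_cp}$\!\implies\!$\eqref{d.cotorsion}, to conclude that $B$ is cotorsion. You instead work homologically: taking a free resolution $0\to F'\to F\to T\to 0$ of an arbitrary torsion-free $T,$ you observe that $F'$ is pure in $F$ precisely because $T$ is torsion-free, so~\eqref{d.extend} forces the restriction map $\r{Hom}(F,B)\to\r{Hom}(F',B)$ to be surjective, whence $\r{Ext}^1(T,B)=0.$ This is essentially the paper's Definition~\ref{D.alg_cp,cotor} of cotorsion, rephrased in $\r{Ext}$ language, so no extra input is needed. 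Your argument is shorter and stays entirely inside elementary homological algebra, avoiding the detour through compact groups and the earlier Proposition~\ref{P.cotor}; the paper's argument, on the other hand, keeps the proof self-contained within the framework already built up, and yields the slightly stronger intermediate statement that $B$ is actually a quotient of a compact group.
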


\begin{proof}
Assuming $B$ cotorsion, let us write it as a homomorphic image
of an algebraically compact abelian group $C.$
Since $F$ is free, we can lift the given map $F\to B$ to a map
$F\to C,$ and then, since $C$ is algebraically compact, equivalently,
pure-injective, we can extend that lifted map to a map $A\to C.$
Composing with our map $C\to B,$
we get the desired extension to $A$ of the given map $F\to B.$

Conversely, assuming~\eqref{d.extend},
write $B$ as a homomorphic image of a free abelian group $F.$
Now by \cite[\S38, Exercise~8, p.\,162]{Fuchs1}, every abelian
group embeds as a pure subgroup in a group admitting
a compact Hausdorff group topology; let $A$ be such an overgroup of $F.$
(For an explicit embedding in  this case,
let $\hat{\Z}$ denote the completion of $\Z$
with respect to its subgroup topology.
Then $\Z$ is a pure subgroup of the compact group $\hat{\Z},$
so writing $F=\bigoplus_I \Z,$ we see that
$F$ is pure in the compact group $\hat{\Z}^I.)$
By~\eqref{d.extend}, our homomorphism of $F$
onto $B$ extends to a homomorphism of $A$ onto $B,$ so
by~Proposition~\ref{P.cotor},
\eqref{d.ab_im_cp}$\!\implies\!$\eqref{d.cotorsion}, $B$ is cotorsion.
\end{proof}

Our first application of this result will show that in a
cotorsion abelian group $B,$ highly divisible elements abound;
for instance, that if $p_1$ and $p_2$ are distinct primes, then
every element of $B$ is the sum of an element divisible by all
powers of $p_1$ and an element divisible by all powers of $p_2.$
To state the result in greater generality, let us, for any set $P$
of primes, write $\Z[P^{-1}]$ for the subring of $\Q$ consisting of
elements whose denominators lie in the multiplicative monoid
generated by $P,$ and call an element $x$
of an abelian group~$A$ {\em $\!P\!$-divisible} if it lies
in the image of a homomorphism from the additive group of
$\Z[P^{-1}]$ to $A.$
We shall call an abelian group $\!P\!$-divisible
if all its elements are.

\begin{proposition}\label{P.Pj}
If $B$ is a cotorsion abelian group,
and $P_0,\dots,P_{m-1}$ are sets of prime numbers such that
$P_0\cap\dots\cap P_{m-1}=\emptyset,$
then every element $b\in B$ can be written
$b_0+\dots+b_{m-1},$ where for each $j,$ $b_j$ is $\!P_j\!$-divisible.
Equivalently, $B$ is a sum of subgroups $B_0+\dots+B_{m-1}$
such that each group $B_j$ is $\!P_j\!$-divisible.
\end{proposition}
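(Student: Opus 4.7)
The plan is to apply Lemma~\ref{L.pure} to a pure extension of $\Z$ constructed so that its generator becomes an internal sum of $P_j$-divisible elements. Given $b\in B$, let $F=\Z$ and let $\varphi\colon F\to B$ be the homomorphism sending $1\in F$ to $b$. If one can exhibit a pure embedding $F\hookrightarrow A$ and a decomposition $1=a_0+\dots+a_{m-1}$ in $A$ with each $a_j$ being $P_j$-divisible, then Lemma~\ref{L.pure} extends $\varphi$ to $\tilde\varphi\colon A\to B$, and the elements $b_j:=\tilde\varphi(a_j)$ are $P_j$-divisible (the homomorphic image of a $P_j$-divisible element is $P_j$-divisible, since post-composing a homomorphism out of $\Z[P_j^{-1}]$ with $\tilde\varphi$ gives another such homomorphism) and sum to $b$.

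For the construction of $A$, I would take $A=\bigoplus_{j<m}\Z[P_j^{-1}]$, with $F=\Z$ embedded diagonally by $1\mapsto(1,\dots,1)$. Let $e_j\in A$ have a $1$ in the $j$-th coordinate and $0$ elsewhere, so that $\sum_{j<m}e_j=(1,\dots,1)$ is the image of $1\in F$. Each $e_j$ is $P_j$-divisible in $A$, since the inclusion of $\Z[P_j^{-1}]$ as the $j$-th summand of $A$ is a homomorphism sending $1$ to $e_j$, and $\Z[P_j^{-1}]$ is itself $P_j$-divisible.

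The main step -- and essentially the only place where the hypothesis $P_0\cap\dots\cap P_{m-1}=\emptyset$ enters -- is verifying that $F$ is pure in $A$. Suppose $(n,\dots,n)=k(x_0,\dots,x_{m-1})$ in $A$ for some $k\geq 1$ and $x_j\in\Z[P_j^{-1}]$. Since each $\Z[P_j^{-1}]$ embeds in $\Q$, every $x_j$ must equal the rational number $n/k$; hence $n/k\in\bigcap_j\Z[P_j^{-1}]$. Writing $n/k$ in lowest terms, its denominator must consist entirely of primes lying in every $P_j$, hence in $\bigcap_jP_j=\emptyset$; so the denominator is $1$ and $k\mid n$. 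Thus $F\cap kA=kF$ for every $k\geq 1$, establishing purity. I do not expect any serious obstacle beyond this little rational-number calculation, which is where the disjointness hypothesis does its work.

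Finally, the ``equivalently'' clause is routine: the set of $P_j$-divisible elements of $B$ is closed under sums and negatives (pointwise addition of homomorphisms out of $\Z[P_j^{-1}]$ again gives a homomorphism into the abelian group $B$), so it forms a subgroup $B_j$, and the element-wise decomposition statement is then identical to $B=B_0+\dots+B_{m-1}$.
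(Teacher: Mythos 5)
Your proposal is correct and takes essentially the same route as the paper: the paper also applies Lemma~\ref{L.pure} to the diagonal copy of $\Z$ in $\Z[P_0^{-1}]\times\dots\times\Z[P_{m-1}^{-1}]$, extends the map $(1,\dots,1)\mapsto b,$ and reads off the $b_j$ as images of the coordinate idempotents, with purity established from $P_0\cap\dots\cap P_{m-1}=\emptyset$ (phrased there via a prime power $p^i$ with $p\notin P_k,$ rather than your lowest-terms computation in $\Q,$ but the content is identical). The only cosmetic differences are your use of the direct sum instead of the direct product, which coincide for finitely many summands, and your explicit verification that $P_j$-divisible elements are preserved under homomorphisms and form a subgroup, which the paper asserts without proof.
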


\begin{proof}
Let $A$ be the additive group of
$\Z[P_0^{-1}]\times\dots\times\Z[P_{m-1}^{-1}],$
and $F$ the infinite cyclic subgroup thereof generated by $(1,\dots,1).$
That the inclusion $F\subseteq A$ is pure follows from
the fact that $P_0\cap\dots\cap P_{m-1}=\emptyset.$
Indeed, if an element $d(1,\dots,1)\in F$ is not divisible in $F$ by
some positive integer $n,$ then $d$ is not divisible by $n,$
so $n$ has a prime power factor $p^i$ not dividing $d.$
Choosing $k$ such that
$p\notin P_k,$ we see that the $\!k\!$-th coordinate of
$d(1,\dots,1)$ is not divisible by $p^i$ in $\Z[P_k^{-1}],$
so in $A,$ $d(1,\dots,1)$ is not divisible by $p^i,$
hence not divisible by $n.$

Hence by Lemma~\ref{L.pure}, for any $b\in B,$ the map $F\to B$ taking
$(1,\dots,1)$ to $b$ extends to $A,$
giving a representation of $b$ as the sum of the images of
the elements $(0,\dots,1,\dots,0),$ each of which is
$\!P_j\!$-divisible for some $j.$
The equivalence of this result to
the final statement of the lemma follows from the
fact that for any set $P$ of primes, the $\!P\!$-divisible elements of
an abelian group form a subgroup.
\end{proof}

As a quick illustration, consider the group $\Z_p$ of $\!p\!$-adic
integers, which we have seen is algebraically
compact, and hence cotorsion.
That group is $\!P\!$-divisible for $P$ the set of all primes
other than $p.$
Given $P_0,\dots,P_{m-1}$ as in Proposition~\ref{P.Pj}, at least
one $P_j$ will fail to contain $p,$ so $\Z_p$ is
$\!P_j\!$-divisible for that~$j,$ confirming the
conclusion of the proposition.

Of course, the much smaller group of rational numbers with
denominators relatively prime to $p$ (of which the group $\Z_p$
is a completion) is $\!P\!$-divisible for the
same set $P,$ and so also satisfies the conclusion of
Proposition~\ref{P.Pj}.
However, that group is not cotorsion.
Indeed, from the characterization of slender abelian groups recalled
immediately after the proof of Proposition~\ref{P.slender},
every abelian group which is torsion-free and which contains no
copy of $\Q$ and has less than continuum cardinality is slender,
hence, if nonzero, is non-cotorsion.

The next result generalizes the above restriction on cotorsion groups.

\begin{proposition}\label{P.ab_cap_dB}
If $B$ is a cotorsion abelian group such that
$dB\neq\{0\}$ for every positive integer $d,$
but $\bigcap_{d\in\Z,\,d>0} dB=\{0\},$
then $B$ has at least continuum cardinality.
\end{proposition}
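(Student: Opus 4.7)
The plan is to exhibit $2^{\aleph_0}$ distinct elements of $B$ by combining Lemma~\ref{L.pure} with the pure embedding $\bigoplus_{n\in\omega}\Z\subseteq\prod_{n\in\omega}\Z$ (pure because $\Z$ is torsion-free) and a carefully chosen test homomorphism.

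The key preliminary is to construct an adaptive divisibility chain $1=d_1\mid d_2\mid d_3\mid\cdots$ of positive integers with $d_nB\supsetneq d_{n+1}B$ for every $n.$ Inductively, given $d_n,$ I seek some $k>1$ with $(d_nk)B\subsetneq d_nB;$ were no such $k$ to exist, $d_nB$ would be $k$-divisible for every $k>1,$ so $d_nB=k\cdot d_nB\subseteq kB$ for every $k,$ giving $d_nB\subseteq\bigcap_{d>0}dB=\{0\}$ and contradicting $d_nB\neq\{0\}.$ Set $d_{n+1}:=d_nk$ and continue. For each $n,$ choose $b_n\in d_nB\setminus d_{n+1}B$ and write $b_n=d_nc_n$ for some $c_n\in B.$ Since $B$ is cotorsion, Lemma~\ref{L.pure} extends the homomorphism $\varphi\colon\bigoplus_{n\in\omega}\Z\to B$ sending $\delta_n\mapsto c_n$ to a homomorphism $\Phi\colon\prod_{n\in\omega}\Z\to B.$

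For each $\alpha\in\{0,1\}^\omega,$ set $\mu(\alpha):=\Phi((\alpha_nd_n)_{n\in\omega})\in B;$ the claim is that $\mu$ is injective, which immediately yields $\card(B)\geq 2^{\aleph_0}.$ Suppose $\mu(\alpha)=\mu(\beta)$ with $\alpha\neq\beta,$ set $\gamma:=\alpha-\beta\in\{-1,0,1\}^\omega,$ and let $N$ be minimal with $\gamma_N\neq 0.$ Splitting $(\gamma_nd_n)_{n\in\omega}=\gamma_Nd_N\delta_N+(\gamma_nd_n)_{n>N}$ and applying $\Phi,$ the first summand contributes $\gamma_Nb_N.$ Because $d_{N+1}\mid d_n$ for every $n\geq N+1,$ the second summand can be written as $d_{N+1}\eta$ with $\eta\in\prod_{n\in\omega}\Z,$ so its image under $\Phi$ lies in $d_{N+1}B$ by $\Z$-linearity. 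Hence $\pm b_N=\gamma_Nb_N\in d_{N+1}B,$ contradicting $b_N\notin d_{N+1}B.$

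The principal subtlety is the adaptive construction of $(d_n).$ Naive choices such as $d_n=n!$ or $d_n=p^n$ for a fixed prime $p$ need not produce strict decrease at every step (for instance, $n!B=B$ for $n<5$ when $B=\Z_5,$ and in groups like $\prod_q\Z_q$ one must choose a prime with a nontrivial contribution). Only the inductive choice above, which jointly leverages $dB\neq\{0\}$ (to keep the induction going) and $\bigcap_ddB=\{0\}$ (to guarantee that a genuinely shrinking multiplier is always available), is certain to work under just the hypotheses given; this is the one place where both halves of the hypothesis are essential in tandem.
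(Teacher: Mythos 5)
Your proposal is correct and takes essentially the same route as the paper's proof: a recursively constructed divisibility chain $1=d_0\mid d_1\mid\cdots$ whose strict descent is forced jointly by $dB\neq\{0\}$ and $\bigcap_{d>0}dB=\{0\},$ extension of a map on $\bigoplus_\omega\Z$ to $\Z^\omega$ via Lemma~\ref{L.pure}, and separation of the images of the elements $(\alpha_nd_n)_{n\in\omega}$ at the least index of disagreement, using that the tail lies in $d_{N+1}B$ while the head term does not. The only difference is notational: your $b_n=d_nc_n$ plays the role of the paper's $d_nb_n$ (the paper sends $\delta_n$ to what you call $c_n$), and your subgroup-chain formulation $d_nB\supsetneq d_{n+1}B$ is an equivalent repackaging of the paper's elementwise condition $d_nb_n\notin d_{n+1}B.$
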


\begin{proof}
We shall construct a homomorphism $\bigoplus_\omega\Z\to B,$
extend it to a map $\Z^\omega\to B$ by Lemma~\ref{L.pure}, and
show that under the extended map,
continuum many elements of $\Z^\omega$ have distinct images.
We begin by carefully selecting the elements to which to
send the free generators of $\bigoplus_\omega\Z.$

I claim that we can choose
positive integers $d_0,d_1,\dots\,,$ each a multiple of the
one before, and elements $b_0,b_1,\dots\in B,$
such that for each $n\in\omega,$ we have $d_n b_n\notin d_{n+1} B.$
We start with $d_0=1,$ and $b_0$ any nonzero element of $B.$
Assuming that for some $n\geq 0,$ $d_n$ and $b_n$ have been
chosen with $d_n b_n\neq 0,$
the hypothesis $\bigcap_{d\in\Z,\,d>0} dB=\{0\}$ allows us to choose
$d_{n+1}>0$ such that $d_n b_n\notin d_{n+1}B.$
Replacing $d_{n+1}$ by a proper multiple if necessary,
we may assume $d_n\,|\,d_{n+1}.$
Using the fact that $d_{n+1} B\neq\{0\},$ we can then choose
$b_{n+1}$ such that $d_{n+1} b_{n+1}\neq 0.$
Continuing recursively, we get
$d_0,d_1,\dots$ and $b_0,b_1,\dots$ with the asserted properties.

We now map $\bigoplus_\omega\Z$ to $B$
by sending each $\delta_n$ to $b_n.$
Since $\bigoplus_\omega\Z$ is
a pure subgroup of $\Z^\omega,$ Lemma~\ref{L.pure}
allows us to extend this map to a homomorphism
$f:\Z^\omega\to B,$ which still carries each $\delta_n$ to $b_n.$

For each $\varepsilon=(\varepsilon_n)_{n\in\omega}\in\{0,1\}^\omega,$
let $\varepsilon d$ denote
$(\varepsilon_0 d_0,\dots,\varepsilon_n d_n,\dots)\in\Z^\omega.$
I claim that distinct strings $\varepsilon$ yield distinct
elements $f(\varepsilon d)\in B.$
Indeed, for $\varepsilon\neq\varepsilon',$
let $n\in\omega$ be the least index such that
$\varepsilon_n\neq\varepsilon'_n,$ and
let us write $f(\varepsilon d)$ as
$f(\varepsilon_0 d_0,\dots,\varepsilon_n d_n,0,0,\dots)+
f(0,\dots,0,\varepsilon_{n+1}d_{n+1},\,\varepsilon_{n+2}d_{n+2},\dots).$
If we compare this with the corresponding expression for
$f(\varepsilon' d),$ we see that the left-hand summands in these
expressions differ by exactly $f(d_n\delta_n),$ i.e.,
$d_n b_n,$ which by assumption does
not lie in $d_{n+1} B;$ while the right-hand
summands {\em do} lie in $d_{n+1} B,$ since for all $m\geq n$
we have $d_{n+1}|d_m.$
Hence $f(\varepsilon d)-f(\varepsilon' d)\neq 0;$
so we indeed have continuum many distinct elements of~$B.$
\end{proof}

As an application, it is easy to deduce that no subgroup $B$ of
$\prod_{\mbox{\scriptsize{primes }}p}\,\Z/p\Z$ which is
infinite, but of less than continuum cardinality, can be cotorsion.
Hence, if we take such a subgroup with no $\!2\!$-torsion,
containing an element $b$ of infinite order,
its semidirect product with $\pm 1$ will again be a group to which
Theorem~\ref{T.fin_many_i} applies.

On the other hand, we saw in Lemma~\ref{L.TS} that for
the semidirect product of $\{\pm 1\}$ with the group $\Q,$
the conclusion of Theorem~\ref{T.fin_many_i} fails; and
Proposition~\ref{P.compact} shows the same for the semidirect
product of $\{\pm 1\}$ with any {\em finite} abelian group.
In fact, $\Q$ and all finite abelian groups are cotorsion;
the next result includes these statements as special cases.
It is curious that its formulation is analogous to that of
Proposition~\ref{P.compact}, but the reasoning is quite different.

\begin{proposition}[cf.~{\cite[p.178, last paragraph of Notes]{Fuchs1}}]\label{P.injective}
Let $B$ be an abelian group which is divisible, or is of
finite exponent, or more generally, is
the sum of a divisible group and one of finite exponent;
or, still more generally, is the underlying
additive group of an injective module over some ring $R.$
Then for any set $I$ and any ultrafilter $\U$ on $I,$ there
is a group homomorphism $B^I/\,\U\to B$ left-inverse to the
natural embedding $B\to B^I\to B^I/\,\U.$

Hence by Lemma~\ref{L.ab_ultra_retr},
\eqref{d.ab_oE_retr}$\!\implies\!$\eqref{d.ab_alg_cp}, every
such $B$ is algebraically compact, and so in particular, is cotorsion.
\end{proposition}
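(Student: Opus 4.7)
The plan is to reduce everything to the most general hypothesis -- that $B$ is the underlying additive group of an injective module over some ring $R$ -- and construct the desired left inverse there. The conclusion of algebraic compactness then follows from Lemma~\ref{L.ab_ultra_retr}, implication~\eqref{d.ab_oE_retr}$\!\implies\!$\eqref{d.ab_alg_cp}.

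For the main case, suppose $B$ is injective as an $R$-module. The product $B^I$ inherits a pointwise $R$-module structure, and the subgroup $N_{\U}=\{(b_i)\in B^I \mid \{i\in I : b_i=0\}\in\U\}$ by which we quotient to form $B^I/\,\U$ is automatically an $R$-submodule, since scaling by an element of $R$ cannot shrink a coordinate's zero-set, hence cannot remove a tuple from $N_{\U}.$ Consequently $B^I/\,\U$ is an $R$-module, the diagonal map $\Delta : B\to B^I/\,\U$ is $R$-linear, and it is injective, because for $b\neq 0$ the zero-set of the constant tuple is $\emptyset\notin\U.$ Injectivity of $B$ as an $R$-module, applied to $\Delta$ and $\r{id}_B,$ produces an $R$-linear extension $\rho : B^I/\,\U\to B$ with $\rho\circ\Delta=\r{id}_B.$ This $\rho$ is in particular a group homomorphism, so it is the desired left inverse.

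The listed special cases fold into this one. Divisible abelian groups are injective $\Z$-modules by Baer's criterion, handling the divisible case with $R=\Z.$ For groups of finite exponent the statement is classical (cf.\ \cite[p.\,178, Notes]{Fuchs1}); via Pr\"{u}fer's theorem one writes such a $B$ as a direct sum of cyclic groups of order dividing its exponent, and the required retraction can be assembled using the self-injectivity of each $\Z/p^k\Z.$ Finally, for $B$ a sum of a divisible subgroup $D$ and a finite-exponent subgroup $E,$ the divisibility of $D$ makes $D$ a direct summand of $B,$ so the claim reduces to the two prior cases combined with the observation that a finite direct sum of algebraically compact groups is algebraically compact (since finite direct sum coincides with finite direct product, and products preserve algebraic compactness).

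The step I would scrutinize most carefully is the finite-exponent case: a $\Z/n\Z$-module need not be injective over $\Z/n\Z$ (e.g.\ $\Z/2\Z$ is not injective over $\Z/4\Z$), so one cannot naively apply the main argument with this obvious ring. I would either simply invoke the Fuchs reference, or work out pure-injectivity directly from Pr\"{u}fer's decomposition, using that the argument reduces to extending a map on the quotient $A/nA$ inside $A'/nA',$ where purity of $A$ in $A'$ translates into purity (hence splitting) over $\Z/n\Z$ on each primary component.
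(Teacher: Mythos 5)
Your construction in the general injective-module case is exactly the paper's: the maps $B\to B^I\to B^I/\,\U$ are $\!R\!$-module homomorphisms whose composite is an embedding (your explicit check that the subgroup $N_\U$ is an $\!R\!$-submodule and that the diagonal is $\!R\!$-linear and one-to-one just spells out what the paper leaves tacit), and injectivity of $B$ over $R$ yields the retraction. The divisible case, via injectivity over $\Z,$ is likewise identical.

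The genuine gap is the one you flagged yourself: the finite-exponent case. You are right that $\Z/n\Z$ is the wrong ring ($\Z/2\Z$ is indeed not injective over $\Z/4\Z$), but instead of retreating to a citation or an unfinished pure-injectivity sketch, the paper closes this case \emph{inside} the general one by changing the ring. By Pr\"ufer's theorem, a group $B$ of exponent $n$ is a direct sum of cyclic groups of order dividing $n;$ grouping summands of equal order $d$ exhibits $B$ as a finite direct sum of free $\!\Z/d\Z\!$-modules as $d$ ranges over the divisors of $n,$ hence as a module over the finite product ring $R=\prod_{d\mid n}\Z/d\Z.$ Each $\Z/d\Z$ is self-injective, so its free modules are injective \cite[Corollary~3.13(1) and Theorem~3.46(4)$\!\implies\!$(2)]{TYL}, and therefore $B$ is injective over $R;$ the general case then applies verbatim and delivers the retraction directly, with no detour through pure-injectivity or the Fuchs reference. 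The mixed case is handled the same way: $D$ splits off as $B=D\oplus E'$ with $E'\cong B/D$ a homomorphic image of $E,$ hence again of finite exponent (a point you elide in saying the claim ``reduces to the two prior cases''), and $B$ is made injective over $\Z\times\prod_d\Z/d\Z.$ Your alternative route for these cases --- algebraic compactness of finite direct sums plus Lemma~\ref{L.ab_ultra_retr}, using \eqref{d.ab_alg_cp}$\!\implies\!$\eqref{d.ab_oA_retr} to recover the retraction for every $I$ and $\U$ --- is logically salvageable, but as written it rests on algebraic compactness of bounded groups, which is precisely the step you left unproved.
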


\begin{proof}
First suppose $B$ has the property introduced above by the
words ``still more generally''.
Then the maps $B\to B^I\to B^I/\,\U$ are $\!R\!$-module
homomorphisms whose composite is an embedding.
The injectivity of $B$ as an $\!R\!$-module
thus yields the desired left inverse map.
Taking $I=\omega$ and $\U$ nonprincipal, we conclude that $B$ is
algebraically compact
(by Lemma~\ref{L.ab_ultra_retr},
\eqref{d.ab_oE_retr}$\!\implies\!$\eqref{d.ab_alg_cp}).

It remains to show that the various sorts of abelian groups
named are indeed injective modules over appropriate rings.
Any divisible abelian group is an injective $\!\Z\!$-module by
\cite[Proposition~3.19]{TYL}.
An abelian group $B$ of finite exponent $n$ can be
written as a direct product
of free $\!\Z/d\Z\!$-modules as $d$ ranges over the divisors
of $n;$ and each of the rings $\Z/d\Z$ is self-injective,
so that its free modules are injective by
\cite[Corollary~3.13(1) and Theorem~3.46(4)$\!\implies\!$(2)]{TYL}.
Finally, if $B$ is the sum of a divisible subgroup $D$ and a
subgroup $E$ of finite exponent, then the injectivity of $D$
over $\Z$ allows us to split it off as a direct summand,
and the complementary summand will be a homomorphic image $E'$ of $E,$
hence again of finite exponent.
We can now make $B=D\oplus E'$ a module over
the direct product $R$ of $\Z$ and finitely many rings $\Z/d\Z,$
in such a way that the component over each of these factor rings
is injective over that ring.
The group $B$ will then be injective over~$R.$
\end{proof}

I do not know the answer to

\begin{question}\label{Q.ab_<=}
For an abelian group $B$ to be cotorsion, is it sufficient
that every homomorphism $\bigoplus_\omega\Z\to B$
extend to a homomorphism $\Z^\omega\to B$?
\textup{(}In other words, in Lemma~\ref{L.pure},
is condition~\eqref{d.extend}
equivalent to the special case where the inclusion $F\subseteq A$
is $\bigoplus_\omega\Z\subseteq\Z^\omega$?\textup{)}
\end{question}

The following example shows that the converse of
Corollary~\ref{C.slender_vs_cotorsion} is not true: a group
$B$ with no nonzero slender homomorphic image need not be cotorsion.

\begin{lemma}\label{L.B/T(B)}
Within the group $A=\prod_{\mbox{\scriptsize\rm{primes }}p}\,\Z/p\Z,$
let $u$ be the element having $1$ in every coordinate,
and let $B$ consist of all elements $b\in A$ such that
$db=nu$ for some integer $n$ and nonzero integer $d$
\textup{(}mnemonic for ``numerator'' and ``denominator''\textup{)}.

Then $B$ is a countable subgroup of $A,$ such that every
cotorsion subgroup of $B$ is torsion \textup{(}so that $B$
is {\em not} itself cotorsion\textup{)}, but the factor-group
of $B$ by its torsion subgroup is isomorphic to $\Q,$ and so
is cotorsion.

Hence $B$ has no nonzero slender homomorphic images.
\end{lemma}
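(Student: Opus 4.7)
The plan is to exploit the group homomorphism $\phi\colon B\to\Q$ defined by $\phi(b)=n/d$ whenever $db=nu$. First I would verify that $\phi$ is well-defined (using that $u$ has infinite order in $A$, which forces $(dn'-d'n)u=0\Rightarrow dn'=d'n$), that its kernel is exactly the torsion subgroup $T(B)=\bigoplus_p\Z/p\Z$ (every finitely supported element of $A$ lies in $B$ via $db=0=0\cdot u$), and that it is surjective onto $\Q$. For surjectivity, given $n/d\in\Q$ in lowest terms, I would exhibit a preimage by solving $d^{\,2}b=dn\,u$ coordinatewise in $A$: at each prime $p\mid d$ both sides vanish modulo $p$ so $b_p$ is unconstrained, while at the remaining coordinates $b_p$ is forced. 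This gives $B/T(B)\cong\Q$; hence $B$ is countable, and $B/T(B)$ is cotorsion since $\Q$ is divisible and therefore algebraically compact by Proposition~\ref{P.injective}.

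The heart of the proof is showing that every cotorsion subgroup $C\subseteq B$ is torsion. Restricting $\phi$ to $C$ gives an embedding $C/T(C)\hookrightarrow\Q$; as a homomorphic image of $C$ this quotient is cotorsion, torsion-free, and of cardinality $\leq\aleph_0<2^{\aleph_0}$. The remark following Proposition~\ref{P.Pj} (that a nonzero torsion-free abelian group of less than continuum cardinality containing no copy of $\Q$ is slender, hence non-cotorsion) implies that the only nonzero cotorsion subgroup of $\Q$ is $\Q$ itself. So either $C/T(C)=0$, in which case $C$ is torsion and we are done, or $C/T(C)=\Q$. To rule out the latter I would apply Proposition~\ref{P.ab_cap_dB}: since $C$ is cotorsion, $|C|\leq\aleph_0<2^{\aleph_0}$, and $dC\neq\{0\}$ for every $d>0$ (because $C$ surjects onto the nonzero divisible group $\Q$), the conclusion of that proposition fails, which forces $\bigcap_{d>0}dC\neq\{0\}$. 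But any nonzero $c$ in this intersection lies in $pB$ for every prime $p$, and $c\in pB$ forces the $p$-th coordinate of $c$ to vanish in $\Z/p\Z$; so $c=0$, a contradiction.

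The final assertion then follows at once: given a homomorphism $h\colon B\to S$ with $S$ slender, the torsion-freeness of $S$ forces $h$ to vanish on $T(B)$ and thus to factor through $B/T(B)\cong\Q$; any nonzero homomorphic image of $\Q$ is a nonzero divisible group, which a slender group (being torsion-free and containing no copy of $\Q$) cannot contain, so the image of $h$ must be zero. The main obstacle is eliminating the case $C/T(C)=\Q$ in the middle step: this requires combining the countability of $B$ with Proposition~\ref{P.ab_cap_dB} and the coordinate-level observation that $B$ itself admits no nonzero element divisible by every positive integer.
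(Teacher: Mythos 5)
Your proof is correct, and its skeleton is the paper's: the map $b\mapsto n/d$ identifying $B/T(B)$ with $\Q,$ Proposition~\ref{P.injective} for $\Q$ being cotorsion, Proposition~\ref{P.ab_cap_dB} combined with the coordinatewise observation that $\bigcap_{d>0} dC=\{0\}$ for any subgroup $C\subseteq A$ to dispose of cotorsion subgroups, and torsion-freeness of slender groups for the last assertion. The one place you genuinely diverge is the middle step. The paper simply cites the observation recorded immediately after the proof of Proposition~\ref{P.ab_cap_dB}: no infinite subgroup of $A$ of less than continuum cardinality is cotorsion, so cotorsion subgroups of the countable group $B$ are \emph{finite}, hence torsion. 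That direct route obtains the hypothesis $dC\neq\{0\}$ for all $d>0$ from infiniteness alone (if $dC=\{0\}$ then $C\subseteq\bigoplus_{p\mid d}\Z/p\Z$ is finite), needs no information about $\phi(C),$ and yields the stronger conclusion ``finite'' rather than just ``torsion.'' Your detour through the dichotomy $\phi(C)=0$ versus $\phi(C)=\Q$ --- using the remark after Proposition~\ref{P.Pj} to show $\Q$ is the only nonzero cotorsion subgroup of $\Q,$ and extracting $dC\neq\{0\}$ from the surjection onto $\Q$ --- is valid but does extra work for a weaker intermediate statement; its compensating virtue is that it makes explicit how the structure of the torsion-free quotient enters. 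Your supplementary details are also sound and fill in what the paper leaves implicit: well-definedness of $\phi$ from $u$ having infinite order, surjectivity via solving $d^{2}b=dn\,u$ coordinatewise (a nice device, since $db=nu$ itself is unsolvable at primes $p\mid d$ when $\gcd(n,d)=1$), and the divisibility argument at the end, which replaces the paper's appeal to Corollary~\ref{C.slender_vs_cotorsion} by the elementary fact that a slender group contains no nonzero divisible subgroup.
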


\begin{proof}
That $B$ is a subgroup of $A$ is immediate.
It is countable because each $b\in B$ is determined by any choice
of $n$ and $d$ satisfying $db=nu,$ together with the
coordinates of $b$ at the finitely many primes dividing $d.$

By the observation following the proof of Proposition~\ref{P.ab_cap_dB},
cotorsion subgroups of $B$ are finite, hence are torsion.

On the other hand, the factor group of $B$ by its torsion
subgroup is isomorphic to $\Q$
via the map sending the image of each $b\in B$ to the common value
of $n/d\in\Q$ for all relations $db=nu$ satisfied by $b;$ and
$\Q,$ being divisible, is cotorsion by Proposition~\ref{P.injective}.

Since slender groups are torsion-free, a homomorphism $f$
from $B$ to a slender group must annihilate the torsion subgroup
of $B,$ hence $f(B)$ must be a homomorphic image of $\Q,$ hence
by Corollary~\ref{C.slender_vs_cotorsion} must be zero.
\end{proof}

Here is a question of a different flavor.

\begin{question}\label{Q.ab<gp}
If an abelian group $B$ can be written as a homomorphic image of a
nonprincipal countable ultraproduct of
{\em not necessarily abelian} groups $G_n,$ must it be
a homomorphic image of a nonprincipal countable ultraproduct of
{\em abelian} groups, i.e., must it be cotorsion?
\end{question}

The reason this question is nontrivial is that
abelianization does not commute with ultraproducts.
For instance, let $G$ be a group which is perfect
(satisfies $G=[G,\,G])$ but which
for each $n$ has an element $x_n$ that cannot be
written as the product of fewer than $n$ commutators.
(The latter property is called
``infinite commutator width'';
for examples of such $G$ see \cite{width}.)
Then no nonprincipal ultrapower $G^\omega/\U$ will be perfect,
because for such a family of elements $x_n,$
the image of $(x_n)_{n\in\omega}\in G^\omega$ in
$G^\omega/\U$ will not be a product of finitely many commutators.
Hence the abelianization $B$ of $\prod_{n\in\omega} G_n/\U$ is
a nontrivial abelian group
satisfying the hypothesis of Question~\ref{Q.ab<gp}, but there is
no obvious candidate for a representation of $B$ as in the conclusion
of that question.

We can, however, prove a weak result in the direction
of a positive answer.

\begin{lemma}\label{L.ab<gp}
If an abelian group $B$ can be written as a homomorphic image of a
nonprincipal countable ultraproduct $\prod_{n\in\omega} G_n/\U$ of
not necessarily abelian groups, then $B$ is
a directed union of cotorsion abelian subgroups.
\end{lemma}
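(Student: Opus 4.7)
The plan is to produce, for each finite subset $F \subseteq B$, a cotorsion subgroup of $B$ containing $F$. These subgroups will form the desired directed family, and the work reduces to two separate tasks: handling singletons via the ultraproduct structure, and combining finitely many such subgroups into one cotorsion subgroup.

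For a single element $b \in B$, I would invoke Lemma~\ref{L.ultra} to obtain a homomorphism $h_b : \Z^\omega/\U \to B$ whose image $C_b$ contains $b$. The domain $\Z^\omega/\U$, being a nonprincipal countable ultraproduct of copies of $\Z,$ is cotorsion by Proposition~\ref{P.cotor} (taking $A_n = \Z$ in \eqref{d.ab_im_ctbl_ultra}$\!\implies\!$\eqref{d.cotorsion}); and since the class of cotorsion abelian groups is closed under homomorphic images (immediate from the characterization as homomorphic images of algebraically compact groups), the subgroup $C_b \subseteq B$ is itself cotorsion. Note that this step uses the hypothesis in its full generality: the $G_n$ may be nonabelian, but Lemma~\ref{L.ultra} still applies since powers of a single element commute.

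Next, for a finite $F = \{b_1,\dots,b_k\} \subseteq B$, I would take $C_F := C_{b_1} + \cdots + C_{b_k}$, which contains $F$ and is a homomorphic image of $C_{b_1} \oplus \cdots \oplus C_{b_k}$ via the addition map (here I use that $B$ is abelian). The main thing to check is that $C_F$ is cotorsion, and this reduces to the assertion that a finite direct sum of cotorsion abelian groups is cotorsion. I would argue this by writing each $C_{b_i}$ as a homomorphic image of an algebraically compact group $D_i$; then $\bigoplus_i D_i$ is algebraically compact (direct products of algebraically compact abelian groups are algebraically compact, since the equivalent property of pure-injectivity is preserved under products; cf.\ \cite[Theorem~38.1]{Fuchs1}), so $\bigoplus_i C_{b_i}$ is cotorsion as a quotient thereof, and therefore so is its further quotient $C_F$.

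Finally, the family $\{C_F : F \subseteq B \text{ finite}\}$ is directed, since clearly $C_{F_1} + C_{F_2} = C_{F_1 \cup F_2}$, and its union is all of $B$ because each $b \in B$ lies in $C_{\{b\}}$. So $B$ is expressed as a directed union of cotorsion abelian subgroups, as required. The main obstacle in this plan is the closure of the cotorsion class under finite direct sums; everything else is a formal consequence of Lemma~\ref{L.ultra}, Proposition~\ref{P.cotor}, and the closure of cotorsion under homomorphic images.
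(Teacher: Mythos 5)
Your proposal is correct and takes essentially the same route as the paper: the paper likewise uses Lemma~\ref{L.ultra} (together with Proposition~\ref{P.cotor} and closure of the cotorsion class under homomorphic images) to place every element of $B$ inside a cotorsion subgroup, and then gets directedness from closure of the cotorsion class under finite direct sums, hence under finite sums inside an abelian overgroup. The only difference is cosmetic: where the paper says closure under finite direct sums is ``easily seen'' from the characterizations \eqref{d.ab_im_redpr}, \eqref{d.ab_im_frechet} or \eqref{d.ab_im_cp}, you justify it by writing each summand as a quotient of an algebraically compact group and using that pure-injectivity is preserved under finite products --- an equally valid verification of the same step.
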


\begin{proof}
By Lemma~\ref{L.ultra}, every cyclic subgroup of $B$ is
contained in a cotorsion subgroup.
Now the class of cotorsion
abelian groups, as characterized by any of~\eqref{d.ab_im_redpr},
\eqref{d.ab_im_frechet} or~\eqref{d.ab_im_cp}, is easily seen to be
closed under finite direct sums, hence since it is closed under
homomorphic images, it is closed under finite sums
in abelian overgroups, so the cotorsion subgroups
of such an overgroup form a directed system.
\end{proof}

But not every directed union of cotorsion groups is cotorsion.
For instance, every torsion abelian group is the
directed union of its finite subgroups, which are cotorsion
by Proposition~\ref{P.injective};
but by Proposition~\ref{P.ab_cap_dB}, the group
$\bigoplus_{\mbox{\scriptsize\rm{primes }}p}\,\Z/p\Z$
is not cotorsion.

We remark that Questions~\ref{Q.ultra} and~\ref{Q.ab<gp}
cannot both have positive answers,
since as noted earlier, a positive answer to Question~\ref{Q.ultra}
would make every torsion abelian
group, including the abovementioned group
$\bigoplus_{\mbox{\scriptsize\rm{primes }}p}\,\Z/p\Z,$ a homomorphic
image of a countable ultraproduct of (not necessarily abelian) groups.
But as we also said earlier, a positive answer to
Question~\ref{Q.ultra} seems highly unlikely.

A noticeable difference between our results on
general groups in \S\S\ref{S.gp_ultra}-\ref{S.gp_prin}, and our
results on abelian groups in the above three sections, is that in the
former, we composed maps $\prod_{i\in I} G_i\to B$ with
the natural map $B\to B/Z(B)$ before looking at factorization
properties, but we have done nothing of the sort for abelian groups.
It might be of interest to see whether one can
improve the results of these sections by composing
homomorphisms $\prod_{i\in I} A_i\to B$ with the map $B\to B/X(B)$
for some natural choice of $X(B),$ such as the torsion subgroup
of $B,$ the subgroup of divisible elements, their sum, or
the sum of all cotorsion subgroups of $B.$
(Lemma~\ref{L.B/T(B)} shows that for the last of these
choices, $B/X(B)$ may not itself be cotorsion-free; but this
need not be a problem; cf.\ the fact that
for a nonabelian group $B,$ the group $B/Z(B)$ may not
have trivial center.)
In the opposite direction, it might be possible to strengthen the
results of \S\S\ref{S.gp_ultra}-\ref{S.gp_prin} by dividing
$B,$ not by $Z(B),$ but by a smaller subgroup $X(Z(B))$
for one of the above constructions $X.$
I leave these ideas for others to explore.

\section{Some related questions that have been studied}\label{S.ab_Chase&&}

The direct sum $\bigoplus_{i\in I} A_i$ of a family of abelian
groups -- or more generally, of a family of modules over any ring
$R$ -- is their {\em coproduct} in the category of abelian groups
or $\!R\!$-modules; hence for such objects,
their coproduct can be regarded as
the subgroup or submodule of elements of finite support in their
direct product $\prod_{i\in I} A_i.$
Now in any category, a homomorphism from a coproduct
of objects $A_i$ to an arbitrary object $B$ is determined simply by
choosing a homomorphism from each $A_i$ to $B.$
So the phenomena we have been investigating in the last two sections
can be looked at as consequences
of the fact that not every such map on a coproduct of abelian
groups can be extended consistently to the
elements of $\prod_{i\in I} A_i$ with infinite supports.
The slender modules are those modules $B$ for which this
restriction on maps to $B$ is so strong that it can only be
satisfied by maps that factor
through the product of finitely many of the $A_i.$

Dually, one gets homomorphisms from an abelian group
or $\!R\!$-module $B$ to a direct product $\prod_{j\in J} C_j$ simply
by choosing a homomorphism into each
$C_j;$ but if we wish to map $B$ into the coproduct
$\bigoplus_{j\in J} C_j\subseteq\prod_{j\in J} C_j,$ we face the
problem of choosing those homomorphisms so that the resulting
map takes each element of $B$ to an element of finite support.
The question of which modules $B$ have the property that the
only way to achieve this is by mapping
into a finite subsum of $\bigoplus_{j\in J} C_j$
is answered by El Bashir, Kepka and N\v{e}mec in
Proposition~4.1 of \cite{EKN}; that paper also studies the
corresponding questions for colimit constructions other than coproducts.

Several workers, beginning with
Chase \cite{Chase1}, \cite{Chase2}, have looked at
the two-headed situation of
module homomorphisms $f:\prod_{i\in I} A_i\to\bigoplus_{j\in J} C_j.$
Here one may ask when every such map is a sum of one homomorphism
which factors through the projection of $\prod_{i\in I} A_i$ onto
a finite subproduct, and another which factors through the inclusion of
a finite subsum in $\bigoplus_{j\in J} C_j.$
Just as, in studying nonabelian groups
in~\S\S\ref{S.gp_ultra}-\ref{S.gp_prin}, we found it desirable
to divide out by $Z(B)$ to avoid certain easy ways that maps
could involve infinitely many factors, so in the results
of this sort, two adjustments turn out to be useful: dividing out
by submodules of ``highly divisible'' elements of the $C_j,$
and multiplying the given homomorphism by some nonzero ring
element $d;$ which essentially means restricting it to
$\prod_{i\in I} d A_i.$
Thus, \cite[Theorem~1.2]{Chase2}, more or less the
starting point for the development of the subject, says,
if restricted to the case where the base ring is $\Z$ and
where a certain filter of principal right ideals
in the statement of that
theorem consists of {\em all} the nonzero ideals of $\Z,$
that given any homomorphism of abelian groups
$f:\prod_{n\in\omega} A_n\to\bigoplus_{i\in I} C_i,$
there exists an integer $d>0$ such
that when $f$ is applied to $\prod_{n\in\omega} d A_n,$
and followed by the factor map
$\bigoplus_{i\in I} C_i\to\bigoplus_{i\in I} (C_i/\bigcap_{e>0} eC_i),$
it carries the product of some cofinite subfamily of the $d A_n$
into the sum of a finite subfamily of the $C_i/\bigcap_{e>0} eC_i.$

That result (in its general module-theoretic form)
is strengthened in \cite[Theorem~2]{MD+BZB} to
allow products $\prod_{i\in I} A_i$ over any index set $I$ of
cardinality
less than all uncountable measurable cardinals, and to remove the
requirement that the right ideals considered be principal, while
in \cite[Theorem~9]{P_vs_cP}, the direct product is replaced by
a general inverse limit.
For further related work, see references
in \cite[first paragraph of p.46]{P_vs_cP}.

(It is curious that the proof of the abovementioned theorem from
\cite{Chase2}, and that
of Proposition~\ref{P.ab_cap_dB} of this note
use virtually the same construction, but for
very different purposes: in \cite{Chase2}, to obtain a
contradiction by constructing an element whose image
in the direct sum would have infinitely many nonzero components;
in Proposition~\ref{P.ab_cap_dB}, to get
continuum many distinct elements in the image of our map.)

Turning back to the results of the three preceding sections, it would,
of course, be desirable to investigate the corresponding
questions with abelian groups replaced
by modules over a general ring $R.$
In \cite[Chapters~7--8]{J+L}, algebraically compact modules over
general $R$ are studied, but cotorsion modules are not mentioned.
(There {\em are} numerous MathSciNet results for ``cotorsion module'',
but I have not had time to examine them.)
One might also take a hint from \cite{Chase1}, \cite{Chase2},
and see whether
one gets nonobvious variants of our results if one considers those
$B$ such that all homomorphisms $\prod_{i\in I} A_i\to B$ acquire
the factorization properties we are looking for {\em after} multiplying
$\prod_{i\in I} A_i$ by some integer (or ideal), and/or
dividing $B$ by an appropriate subgroup (or submodule) of highly
divisible elements.
(This is related to the suggestion in the last paragraph of the
preceding section.)

\section{Rings.}\label{S.rings}

As mentioned in \S\ref{S.intro}, the results of this note were inspired
by investigations of factorization properties of
homomorphisms on direct products of not-necessarily-associative
algebras over an infinite field \cite{prod_Lie1},
\cite{prod_Lie2}, \cite{cap_ultra}, \cite{FM}.
In those papers, the assumption of infinite base field
was used to show that, under appropriate bounds on the
vector-space dimension of $B,$
the ultrafilters occurring had to be principal.

If we look at not-necessarily-associative {\em rings},
without assuming a structure of algebra over a field,
then as we shall see below, we can still get results
analogous to the main results of \S\ref{S.gp_ultra}
(on when maps must factor through finitely many ultraproducts)
and those of \S\ref{S.gp_prin} (saying that such ultraproducts
must be principal under appropriate assumptions on
the additive structure of $B).$
Between these we shall insert Proposition~\ref{P.ring_main},
which will say that if our rings
have unit, then the absence of factorization through finitely many
ultraproducts implies the existence of an
associative commutative subring of $B$ with
the cardinality of the continuum, of an explicitly describable form,
over which $B$ becomes an algebra.
I will not repeat here the results on algebras over an infinite field
from the papers cited above; and having
spent many words on those papers, I will be brief in this section.

In a direct product ring $\prod_{i\in I}R_i,$
we define the support of an element $x=(x_i)_{i\in I}$
to be $\{i\in I\mid x_i\neq 0\}.$
Whereas in \S\S\ref{S.gp_ultra}-\ref{S.gp_prin}, our basic tool
was the {\em commutativity} of elements with disjoint supports in
a product group, and the phenomenon that this tool could not
handle was avoided by dividing out by the
center, $Z(B),$ the corresponding
tool in \cite{prod_Lie1}, \cite{prod_Lie2}, \cite{cap_ultra}, \cite{FM}
was the fact that ring elements with disjoint supports
have product zero; and the ideal one had to divide out by
(which was also denoted $Z(B))$ was the zero-multiplication ideal.
In this section, for $B$ a ring, we will, as in those papers, write
\begin{equation}\begin{minipage}[c]{35pc}\label{d.ring_B}
$Z(B)\ =\ \{b\in B\mid b\,B=B\,b=0\}.$
\end{minipage}\end{equation}
As in \S\S\ref{S.gp_ultra}-\ref{S.gp_prin},
we let $\pi: B\to B/Z(B)$ be the quotient homomorphism.

There was one commutativity result in
Theorem~\ref{T.gp_main} above that arose for a reason other than that
elements in different factors of a direct product commute,
namely,~\eqref{d.commute}, which followed from the fact that every
element commutes with itself.
Thus, the analog of that one statement,~\eqref{d.ring_commute}
below, again concerns commutativity, rather than zero products.

The obvious analog of Lemma~\ref{L.gp_Us} holds for rings,
and yields the following analog of Theorem~\ref{T.gp_main}.

\begin{theorem}\label{T.ring_main}
Let $B$ be a ring \textup{(}understood here to mean an abelian
group given with an arbitrary bilinear multiplication $B\times B\to B),$
and suppose there exist a family $(R_i)_{i\in I}$
of rings, and a surjective
ring homomorphism $f:\prod_{i\in I} R_i\to B,$ such that
the induced homomorphism $\pi f:\prod_{i\in I} R_i\to B/Z(B)$
does not factor
through the natural map from $\prod_{i\in I} R_i$ to any finite
product of ultraproducts of the $R_i.$
Then $B$ contains families of elements $(a_S),$ $(b_S),$
indexed by the subsets $S\subseteq\omega,$ such that
\begin{equation}\begin{minipage}[c]{35pc}\label{d.ring_commute}
All the elements $a_S$ $(S\subseteq\omega)$ commute with one another,
and all the elements $b_S$ likewise commute with one another.
\end{minipage}\end{equation}
\begin{equation}\begin{minipage}[c]{35pc}\label{d.ring_a_disj}
For {\em disjoint} subsets $S$ and $T$ of $\omega,$
we have $a_S + a_T=a_{S\cup T},$ $b_S + b_T=b_{S\cup T},$
and $0=a_S\,a_T=a_S\,b_T=b_S\,a_T=b_S\,b_T.$
\end{minipage}\end{equation}
\begin{equation}\begin{minipage}[c]{35pc}\label{d.ring_one}
For subsets $S$ and $T$ of $\omega$ with $\card(S\cap T)=1,$
we have $a_S\,b_T\neq 0.$\qed 
\end{minipage}\end{equation}
\end{theorem}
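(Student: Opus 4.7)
The plan is to follow the structure of the proof of Theorem~\ref{T.gp_main}, substituting two adaptations appropriate to the ring setting: noncommutation of images is replaced by nonvanishing of their product, and the multiplicative combination of elements with singleton supports is replaced by their additive combination (legitimate because the $R_i$ are abelian groups under addition).

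First, I would invoke the ring analog of Lemma~\ref{L.gp_Us}, which the author asserts holds with its obvious formulation: the hypothesis on $f$ makes the filter $\F=\{S\subseteq I\mid f(\prod_{i\in I-S}R_i)\subseteq Z(B)\}$ fail to be a finite intersection of ultrafilters, and by~\eqref{d.cap_ultra} this yields a partition $I=\bigsqcup_{n\in\omega}J_n$ together with elements $x_n,y_n\in\prod_{i\in J_n}R_i$ for each $n$, chosen so that $f(x_n)\,f(y_n)\neq 0$ in $B$. Writing $H_n=\prod_{i\in J_n}R_i$, we identify $\prod_{i\in I}R_i$ with $\prod_{n\in\omega}H_n$ and view $x_n,y_n$ as elements of singleton support $\{n\}$ in this product.

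Second, for each $S\subseteq\omega$ define $x_S\in\prod_{n\in\omega}H_n$ to have $n$-th coordinate $x_n$ if $n\in S$ and $0$ otherwise, and define $y_S$ analogously from the $y_n$; then set $a_S=f(x_S)$ and $b_S=f(y_S)$. The three conclusions~\eqref{d.ring_commute}--\eqref{d.ring_one} are verified coordinate-wise in $\prod_{n\in\omega}H_n$ and then pushed through $f$. For~\eqref{d.ring_commute}, any two $x_S,x_T$ commute because, coordinate by coordinate, the product $x_Sx_T$ equals $x_nx_n$ when $n\in S\cap T$ and $0$ otherwise, with the same value for $x_Tx_S$; similarly for the $y_S$. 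For~\eqref{d.ring_a_disj} with $S\cap T=\emptyset$, coordinate-wise additivity gives $x_S+x_T=x_{S\cup T}$ and $y_S+y_T=y_{S\cup T}$, while disjointness of supports forces each of $x_Sx_T$, $x_Sy_T$, $y_Sx_T$, $y_Sy_T$ to vanish, since in every coordinate at least one factor is $0$. For~\eqref{d.ring_one} with $S\cap T=\{n\}$, the element $x_Sy_T$ has $n$-th coordinate $x_ny_n$ and all other coordinates $0$, so $f(x_Sy_T)$ equals the image under $f$ of a singleton-supported element whose value at $n$ is $x_ny_n$; by the choice of $x_n,y_n$ this image is nonzero.

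The one step requiring thought is the ring analog of Lemma~\ref{L.gp_Us} itself, which must produce $x_n,y_n$ with the product $f(x_n)\,f(y_n)$ (in that order) nonzero, rather than merely with $f(x_n)\notin Z(B)$. Given $x_n$ with the latter property, one picks $b\in B$ with $f(x_n)\,b\neq 0$ (if instead only $b\,f(x_n)\neq 0$ is available, swap the roles of the two families globally after passing to a subsequence on which a single case holds); lifting $b$ modulo $Z(B)$ to some $y\in\prod_{i\in I}R_i$ and writing $y=y_n+y'$ additively according to $J_n$ and $I-J_n$, the product $f(x_n)\,f(y')=f(x_ny')$ vanishes because $x_ny'=0$ in $\prod_{i\in I}R_i$, forcing $f(x_n)\,f(y_n)\neq 0$. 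Beyond this, the argument is routine bookkeeping.
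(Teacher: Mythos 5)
Your proposal is correct and is essentially the proof the paper intends: the paper gives no separate argument for Theorem~\ref{T.ring_main}, saying only that the obvious ring analog of Lemma~\ref{L.gp_Us} holds and yields the theorem by the method of Theorem~\ref{T.gp_main}, which is exactly what you carry out (with $e$ replaced by $0,$ noncommutation replaced by nonvanishing products, and the mod-$Z(B)$ lifting still harmless because adding an element of $Z(B)$ does not change products). One small simplification: your global pass-to-a-subsequence-and-swap to handle the two one-sided cases of noncentrality is unnecessary, since if at some index $n$ only $b\,f(x_n)\neq 0$ fails and $b\,f(x_n)\neq 0$ is replaced by the left-sided witness, you may simply interchange the roles of $x_n$ and $y_n$ at that index alone -- nothing in the verification of \eqref{d.ring_commute}--\eqref{d.ring_one} ties the choice at one index to the choices at the others.
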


One gets from this the obvious analog of Corollary~\ref{C.c},
which I will not write down, only noting one minor way in which
the statement is weaker than that corollary:
In a nonassociative ring,
a family of pairwise commuting elements need not generate a commutative
subring, so the assertion of commutativity in the
last sentence of Corollary~\ref{C.c} disappears here.

One likewise has the analog of Theorem~\ref{T.gp_CC}.
Namely, following \cite[Definitions~13 and~15]{prod_Lie1},
we define an almost direct decomposition of a ring $B$ as
an expression $B=B'+B'',$ where $B'$ and $B''$ are ideals of $B,$
each of which is the $\!2\!$-sided annihilator of the other;
and we shall say that $B$ has chain condition on almost direct
factors if every chain of such ideals is finite.
Then we get

\begin{theorem}[cf.\ {\cite[Proposition~16]{prod_Lie1}}]\label{T.ring_CC}
Let $B$ again be a ring such that there exist a family of rings
$(R_i)_{i\in I},$ and a surjective ring homomorphism
$f:\prod_{i\in I} R_i\to B$ such that the induced homomorphism
$\pi f:\prod_{i\in I} R_i\to B/Z(B)$ does not factor
through the natural map of $\prod_{i\in I} R_i$ to any finite
direct product of ultraproducts of the $R_i.$

Then $B$ does not have chain condition on almost direct factors.
In fact, it has a family of almost direct factors order-isomorphic
to the lattice $2^\omega,$ and forming a sublattice
of the lattice of ideals of $B.$
\end{theorem}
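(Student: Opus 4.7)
The plan is to mirror the proof of Theorem~\ref{T.gp_CC}, substituting two-sided annihilators for centralizers, zero products for commutators, and reading $Z(B)$ in its new meaning as the zero-multiplication ideal. The backbone is a ring analog of Lemma~\ref{L.B_S}, which I would establish first. For each $S\subseteq I$, set $B_S=f(\prod_{i\in S}R_i)+Z(B)$, a two-sided ideal of $B$. One then verifies, by arguments exactly parallel to those of Lemma~\ref{L.B_S}, that $B_\emptyset=Z(B)$, $B_I=B$, $B_S+B_T=B_{S\cup T}$, $B_S\cap B_T=B_{S\cap T}$, and (the key fact) that the two-sided annihilator of $B_T$ in $B_S$ equals $B_{S-T}$.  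Throughout, the observation ``elements of $\prod_{i\in I}R_i$ with disjoint supports commute'' is replaced by the stronger one ``elements of $\prod_{i\in I}R_i$ with disjoint supports multiply to $0$.''

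Next I would record the ring analog of Lemma~\ref{L.gp_Us}: under the hypotheses of the theorem, there is a partition $I=\bigcup_{n\in\omega}J_n$ such that for each $n$ the ring $\prod_{i\in J_n}R_i$ contains elements $x_n,y_n$ with $f(x_n)\,f(y_n)\neq 0$. Its proof is a translation of that of Lemma~\ref{L.gp_Us}: each $J_n$ is chosen so that $f(\prod_{i\in J_n}R_i)\not\subseteq Z(B)$, one picks $x_n$ with $f(x_n)\notin Z(B)$, uses surjectivity of $\pi f$ to choose $y\in\prod_{i\in I}R_i$ with $f(x_n)\,f(y)\neq 0$, and then restricts $y$ to supports in $J_n$, noting that the complementary piece annihilates $f(x_n)$ by disjointness of supports.

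With these tools in hand, the proof of Theorem~\ref{T.ring_CC} runs as in Theorem~\ref{T.gp_CC}.  For $S\subseteq\omega$, set $C_S:=B_{\bigcup_{n\in S}J_n}$. Since the $J_n$ partition $I$, the annihilator identity says that $C_S$ and $C_{\omega-S}$ are mutual two-sided annihilators in $B$, and $C_S+C_{\omega-S}=B_I=B$, so each $C_S$ is an almost direct factor of $B$. The identities $C_S+C_T=C_{S\cup T}$ and $C_S\cap C_T=C_{S\cap T}$ follow directly from the corresponding identities for the $B_S$, so $S\mapsto C_S$ is a lattice homomorphism from $2^\omega$ into the lattice of ideals of $B$. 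To show this homomorphism is injective, suppose $S\not\subseteq T$ and pick $m\in S-T$; if $C_S\subseteq C_T$, then $B_{J_m}\subseteq C_S\subseteq C_T$, and since $J_m$ is disjoint from $\bigcup_{n\in T}J_n$, the annihilator identity gives that $B_{J_m}$ two-sided annihilates $C_T$, hence annihilates itself, contradicting $f(x_m),\,f(y_m)\in B_{J_m}$ with $f(x_m)\,f(y_m)\neq 0$.

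The main obstacle is the annihilator identity in the ring analog of Lemma~\ref{L.B_S}. For the nontrivial containment, suppose $b\in B_S$ annihilates $B_T$ on both sides, and write $b=f(u')+f(u'')+z$ with $u'\in\prod_{i\in S\cap T}R_i$, $u''\in\prod_{i\in S-T}R_i$, $z\in Z(B)$; one must show $f(u')\in Z(B)$. Because $u'$ and $u''$ have disjoint supports, $f(u'')$ two-sided annihilates $f(u')$, and $z$ does so by definition; so the hypothesis that $b$ annihilates $B_T$ forces $f(u')$ to annihilate $B_T$. On the other hand, $u'$ has support in $S\cap T$, disjoint from $I-T$, so $f(u')$ automatically annihilates $B_{I-T}$. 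Therefore $f(u')$ annihilates $B_T+B_{I-T}=B$, i.e., $f(u')\in Z(B)$, as required. Once this identity is in place, the rest of the argument is bookkeeping, and the ring case of Lemma~\ref{L.B_S}'s surjectivity statement~\eqref{d.surj} is not even needed.
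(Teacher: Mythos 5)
Your proposal is correct and takes essentially the same route as the paper, which establishes Theorem~\ref{T.ring_CC} exactly by the translation you carry out -- replacing centralizers by two-sided annihilators, commutators by zero products, and the center by the zero-multiplication ideal in the ring analogs of Lemma~\ref{L.gp_Us}, Lemma~\ref{L.B_S} (with $B_S=f(\prod_{i\in S}R_i)+Z(B)$), and the proof of Theorem~\ref{T.gp_CC} (the paper leaves this translation implicit, citing \cite{prod_Lie1}). One small misstatement to repair in your annihilator-identity argument: the fact needed to isolate $f(u')$ is that $f(u'')$ two-sidedly annihilates $B_T$ (since its support lies in $S-T,$ disjoint from $T),$ not that it annihilates $f(u'),$ which is beside the point; the correct fact follows from the same disjoint-supports observation you use throughout.
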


So far we have not assumed our rings unital, since that hypothesis
is unnatural for many important classes of nonassociative rings.
The next result shows how in the
unital case, the above theorems can be simplified and strengthened.
For unital rings $B$ we have $Z(B)=0,$ so
$B/Z(B)$ everywhere becomes $B.$
Moreover, we can take each of the systems of elements
$x_n,\,y_n\in\prod_{i\in J_n} R_i$ from which we obtain
the elements $a_S$ and $b_S$ in Theorem~\ref{T.ring_main} to
consist of the multiplicative identity elements of the rings
$\prod_{i\in J_n} R_i.$
With a little further work, we shall get:

\begin{proposition}\label{P.ring_main}
Under the common hypotheses of Theorem~\ref{T.ring_main}
and~\ref{T.ring_CC}, if the rings
$B$ and $R_i$ are unital, with homomorphisms preserving
multiplicative identity elements, then $B$ is a faithful unital algebra
over a commutative associative unital subring
of the form $\prod_{n\in\omega} \Z/d_n\Z,$
where each $d_n$ is a nonnegative integer $\neq 1.$
Moreover, one can take all but one of the $d_n$ to be
equal, and that one to be a multiple \textup{(}not necessarily
proper\textup{)} of the common value of the others.
\end{proposition}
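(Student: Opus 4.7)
My plan is to exploit the structure that the ring-analog of Lemma~\ref{L.gp_Us} places on $B$ via a system of central idempotents arising from the partition. That analog is obtained by adapting the group proof, replacing ``do not commute'' with ``have nonzero product'' (elements of disjoint support in a direct product of rings annihilate one another), and it supplies a partition $I=\bigsqcup_{n\in\omega}J_n$ on which $f$ does not vanish identically on any $\prod_{i\in J_n}R_i$. Since $B$ is unital, $Z(B)=0$ and $\pi f=f$; and because each $r\in\prod_{i\in J_n}R_i$ equals $r\cdot 1_{J_n}$, the non-vanishing of $f$ on that factor forces $e_n:=f(1_{J_n})\ne 0$ for every $n$.

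For $S\subseteq\omega$ put $J_S:=\bigcup_{n\in S}J_n$. The elements $1_{J_S}$ are central idempotents of $\prod_{i\in I}R_i$ satisfying $1_{J_S}\,1_{J_T}=1_{J_{S\cap T}}$ and $1_{J_S}+1_{J_{\omega\setminus S}}=1_I$, so their images $e_S=f(1_{J_S})$ are central idempotents of $B$ with the analogous relations. I would define a unital ring homomorphism $\phi\colon\Z^\omega\to\prod_{i\in I}R_i$ by sending $(m_n)_n$ to the element whose $i$-th coordinate is $m_{n(i)}\cdot 1_{R_i}$, where $n(i)$ is the unique $n$ with $i\in J_n$; its image lies in the center. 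Hence $A:=f\phi(\Z^\omega)\subseteq B$ is a central commutative associative unital subring, and $B$ is automatically a faithful unital $A$-algebra.

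Let $d_n$ be the additive order of $e_n$ (taking $d_n=0$ when that order is infinite); since $e_n\ne 0$, $d_n\ne 1$. Writing $\Psi=f\phi$, the containment $\ker\Psi\subseteq\prod_n d_n\Z$ is immediate: for $(m_n)\in\ker\Psi$, multiplication by the element of $\Z^\omega$ with $1$ in position $n$ and $0$ elsewhere gives $m_ne_n=0$, so $d_n\mid m_n$. The reverse inclusion is the main point and in general fails for the original partition. To secure it, and simultaneously to obtain the ``moreover'' clause, I would coarsen the partition so that almost all the resulting orders coincide. Writing $c$ for the characteristic of $B$, one has $d_n\mid c$; if $c>0$, pigeonhole gives an infinite $T\subseteq\omega$ on which the $d_n$ share a common value $d\mid c$, and the new partition is obtained by reindexing $\{J_n\colon n\in T\}$ as $J'_1,J'_2,\dots$ while merging the remaining $J_n$ into a single exceptional block $J'_0$. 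A parallel construction handles $c=0$, using $d=0$ whenever infinitely many of the original $d_n$ vanish and otherwise sweeping the finite-order blocks into $J'_0$. By construction the new orders satisfy $d'_n=d$ for $n\ne 0$ and $d'_0=dk$ for some $k\ge 0$ (with $k=0$ encoding $d'_0=0$). After replacing $d$ by $dk$ if necessary, $d\cdot 1_B=0$, so $d\Z^\omega\subseteq\ker\Psi'$; combined with the easy containment this gives $\ker\Psi'=\prod_n d'_n\Z$ and hence $A'\cong\prod_n\Z/d'_n\Z$ in the advertised form.

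The principal obstacle is the coarsening step. Non-vanishing of each new $e'_n$ is automatic, because each new block contains some original $J_m$ with $e_m\ne 0$ as a summand of its idempotent. The delicate part is the $c=0$ case, where I must extract a uniform value of $d$ from the original data; this seems to require pushing on the non-factorization hypothesis itself, by arguing that if no value of $d_n$ were attained infinitely often then one could after all factor $f$ through finitely many ultraproducts, contradicting the standing hypothesis.
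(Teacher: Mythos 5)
Your setup is sound as far as it goes: the central idempotents, the map $\Psi=f\phi,$ the easy containment $\ker\Psi\subseteq\prod_n d_n\Z,$ and faithfulness are all fine, and you have correctly located the main difficulty in the coarsening step. But both of your proposed remedies for that step fail. First, even in your ``safe'' case $c>0,$ uniformity of the singleton-block orders does not give the kernel equality: in the line ``after replacing $d$ by $dk$ if necessary, $d\cdot 1_B=0$'' the premise $d\cdot 1_B=0$ need not hold, because $d$ is only the common additive order of the \emph{individual} block idempotents, and the order of the idempotent of an \emph{infinite union} of blocks can strictly exceed $d$ (it is not a finite sum of the constituent idempotents, so nothing bounds it by their lcm). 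Concretely, let $R_n=\Z/4\Z$ and let $f$ be the quotient map onto $B=(\prod_{n\in\omega}\Z/4\Z)/\bigoplus_{n\in\omega}2\,\Z/4\Z,$ which satisfies the non-factorization hypothesis with $J_n=\{n\}.$ Every $e_n$ has order $2,$ so your construction predicts $A'\cong\prod_n\Z/2\Z;$ but $(2,2,2,\dots)\in\prod_n 2\Z$ maps to $2\cdot 1_B\neq 0$ (here $1_B$ has order $4),$ so $\ker\Psi'\subsetneq\prod_n d'_n\Z.$ Second, your fallback for $c=0$ rests on a false implication: if no value of $d_n$ is attained infinitely often, it does \emph{not} follow that $f$ factors through finitely many ultraproducts --- the identity map of $\prod_{n\geq 1}\Z/p^n\Z$ satisfies the non-factorization hypothesis while the orders $d_n=p^n$ are pairwise distinct. (A smaller separate issue: your merged exceptional block $J'_0$ has order the lcm of the leftover $d_n,$ which need not be a multiple of $d$ unless you fold one uniform block into it.)

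The paper's proof supplies exactly the missing idea, and it works at the level of infinite unions rather than single blocks. For $S\subseteq\omega$ let $c_S$ be the additive order of the image under $f$ of the identity of $\prod_{i\in\bigcup_{n\in S}J_n}R_i;$ then $c_T\,|\,c_S$ for $\emptyset\neq T\subseteq S,$ so by the ascending chain condition on ideals of $\Z$ one may choose an \emph{infinite} $S$ making the ideal $c_S\Z$ maximal, whence $c_T=c_S$ for \emph{every} infinite $T\subseteq S.$ One then partitions $S$ into infinitely many infinite sets $T_0,T_1,\dots,$ and sets $S_m=T_m$ for $m>0$ and $S_0=(\omega-S)\cup T_0$ (folding $T_0$ into $S_0$ is what makes $d_0=\mathrm{lcm}(c_{\omega-S},c_S)$ a multiple of the common value, giving the ``moreover'' clause). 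Since every relevant infinite union of the non-exceptional blocks is again an infinite subset of $S,$ the unital subring of $B$ it generates has identity of order exactly $c_S,$ hence additive exponent $c_S$ (in a unital ring the exponent equals the order of $1),$ and this is precisely what upgrades your easy containment to the equality $\ker\Psi=\prod_m d_m\Z.$ A pigeonhole on the original $d_n$ cannot see the characteristics of infinite unions, which is why your kernel computation breaks.
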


\begin{proof}[Sketch of proof]
As in the proof of Theorem~\ref{T.gp_main}, the non-factorization
of $f$ tells us that we can partition $I$ into subsets
$J_n$ $(n\in\omega)$ such that for
each $n,$ $f(\prod_{i\in J_n} R_i)\neq\{0\}.$
(Here we regard the direct product of each subfamily
of the $R_i$ as an ideal of $\prod_{i\in I} R_i.$
Each of these ideals has a multiplicative identity element, generally
different from that of $\prod_{i\in I} R_i.)$
For each $S\subseteq\omega,$ let $x_S$ denote the multiplicative
identity element
of $\prod_{i\in\bigcup_{n\in S} J_n} R_i\subseteq\prod_{i\in I} R_i,$
and let $a_S=f(x_S).$
We see that the operations
of multiplication by $x_S$ and $x_{\omega-S}$ are idempotent
endomorphisms of the additive group
of $\prod_{i\in I} R_i,$ which give the projection
homomorphisms to the two factors of its decomposition as
$(\prod_{i\in \bigcup_{n\in S} J_n} R_i)\times
(\prod_{i\in \bigcup_{n\in\omega-S} J_n} R_i).$
Hence their images $a_S$ and $a_{\omega-S}$ likewise determine a direct
product decomposition of the ring $B.$

Now for every $S\subseteq\omega,$ let $c_S$ denote the nonnegative
integer such that the additive subgroup of $B$ generated
by $a_S$ is isomorphic to $\Z/c_S\Z$
(the characteristic of
the ring $f(\prod_{i\in \bigcup_{n\in S} J_n} R_i)).$
Note that for $\emptyset\neq T\subseteq S,$
we have $1\neq c_T\,|\,c_S.$

The behavior of $c_S$ as a function of $S$ can be complicated;
but with the help of the Noetherian property of the integers,
we can find a family of subsets of $\omega$ on which that
function has an easy description.
Namely, let us choose, among all {\em infinite} $S\subseteq\omega,$
one which gives a maximal value for the ideal $c_S\Z.$
Then for every infinite subset $T\subseteq S,$
we necessarily have $c_T=c_S.$
Hence, let us partition $S$ into countably many infinite
subsets, $T_0,\dots, T_m,\dots\,,$ and use these
to partition $\omega$ into subsets $S_m,$ where for $m>0,$
we let $S_m=T_m,$ while we let $S_0=(\omega-S)\cup T_0.$
Thus, for $m>0$ we have $c_{S_m}=c_S$ by choice of $S,$
while $c_{S_0}=\r{lcm}(c_{\omega-S},\,c_{T_0}),$
a multiple of $c_{T_0}=c_S.$

Let us now map the ring $\Z^\omega$ into $\prod_{i\in I} R_i$
by sending each element $(e_m)_{m\in\omega}$ to the element whose
value on each factor
$\prod_{n\in S_m}(\prod_{i\in J_n} R_i)$ is $e_m$ times the
multiplicative identity element,
and then apply the map $f:\prod_{i\in I} R_i\to B.$
I claim that the image of $\Z^\omega$ in $B$ will be
isomorphic to $\prod_{m\in\omega}\Z/d_m\Z,$ where $d_m=c_{S_m}.$
Indeed, it is easy to verify
that an element of $\Z^\omega$ that goes to zero under the
componentwise map into $\prod_{m\in\omega}\Z/d_m\Z$
goes to zero in $B,$ as a result of our choice of $S$ and the $S_m.$
Conversely, if an
element $(e_m)_{m\in\omega}\in\Z^\omega$ does not have zero image
in $\prod_{m\in\omega}\Z/d_m\Z,$ we can choose an $m_0$ such that
$e_{m_0}$ is not divisible by $d_{m_0};$ and taking the image, in
$B,$ of the ring relation
$(e_m)_{m\in\omega}\,\delta_{m_0}=e_{m_0}\,\delta_{m_0}$
in $\Z^\omega,$
we see that the image of $(e_m)_{m\in\omega}$ in $B$ is also nonzero.

Finally, the fact that every ring $R$ is a $\!\Z\!$-algebra, and
that if $R$ has a multiplicative
identity element $1_R,$ its $\!\Z\!$-algebra
structure is induced by the operations of multiplication
in $R$ by members of $\Z\cdot 1_R,$ easily leads to the result that
$\prod_{n\in S_m}(\prod_{i\in J_n} R_i)$ is a $\!\Z^\omega\!$-algebra,
and that this structure leads to a structure of
$\!\prod_{m\in\omega}\Z/d_m\Z\!$-algebra on its homomorphic image $B.$
\end{proof}

Going back to not-necessarily-unital rings, and
turning to the question of when finitely many ultraproducts
through which a map factors must all be principal,
we can combine Theorem~\ref{T.not_prin} with the idea of
Lemma~\ref{L.ultra} to get the following result.

\begin{theorem}\label{T.principal}
Suppose $B$ is a ring which admits a surjective homomorphism
from a direct product ring, $f:\prod_{i\in I} R_i\to B,$
such that the composite $\pi f:\prod_{i\in I} R_i\to B\to B/Z(B)$
factors through the product of finitely many
ultraproducts of the $R_i,$ but not through the product
of finitely many countably complete ultraproducts.
\textup{(}So if $\card(I)$ is less than all uncountable
measure cardinals, if any exist, the latter condition simply says that
$\pi f$ does not factor through any finite product of the $A_i.)$

Then the additive group of $B/Z(B)$ has a nonzero cotorsion subgroup;
equivalently, it either contains nonzero elements of finite order, or
a copy of the additive group of $\Q,$ or a copy of the additive
group of the $\!p\!$-adic integers for some prime $p.$\qed
\end{theorem}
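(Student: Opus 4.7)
The plan is to combine the argument underlying Theorem~\ref{T.not_prin} with the ``cyclic subgroup'' construction of Lemma~\ref{L.ultra}. First I would write $\pi f$ as a sum of ring homomorphisms $g_k:\prod_{i\in I} R_i/\U_k\to B/Z(B)$ $(0\le k<m),$ one for each ultrafilter in the given factorization. This decomposition is a genuine ring decomposition (not merely an abelian-group one) because in a finite direct product of rings the factors pairwise annihilate; so $\pi f(r_0,\dots,r_{m-1})=\sum_k g_k(r_k)$ with each $g_k$ a ring map and the $g_k$'s having pairwise annihilating images in $B/Z(B).$ Dropping those $k$ with $g_k=0$ still yields a ring factorization of $\pi f$ through the product of the remaining ultraproducts; so by hypothesis, at least one of the surviving $\U_k$ must fail to be countably complete. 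Fix such a $k$; we obtain a nonzero ring homomorphism $g:\prod_{i\in I} R_i/\U_k\to B/Z(B)$ with $\U_k$ not countably complete.

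Next I would invoke the ``variety of finitary algebras'' version of Lemma~\ref{L.countable}, applied to rings, to replace the image of $g$ with a homomorphic image of a nonprincipal countable ultraproduct of rings $\prod_{n\in\omega} R'_n/\U'.$ Choose any nonzero element $b$ in this image, and lift it to $(r_n)_{n\in\omega}\in\prod_{n\in\omega} R'_n.$ I would then mimic Lemma~\ref{L.ultra}: form the abelian-group homomorphism $\gamma:\Z^\omega\to \prod_{n\in\omega} R'_n$ sending $(d_n)_{n\in\omega}$ to $(d_n r_n)_{n\in\omega},$ using each $R'_n$'s intrinsic $\Z$-module structure. This descends modulo $\U'$ to an abelian-group map $\Z^\omega/\U'\to \prod_{n\in\omega} R'_n/\U',$ and composing with the underlying additive map of the given surjection $\prod_{n\in\omega} R'_n/\U'\to B/Z(B)$ yields a homomorphism of additive groups $\Z^\omega/\U'\to B/Z(B)$ taking $(1,1,\dots)$ to $b.$

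The image of this composite is then a nonzero abelian subgroup of $B/Z(B)$ which is a homomorphic image of a nonprincipal countable ultraproduct of abelian groups, hence cotorsion by Proposition~\ref{P.cotor}, \eqref{d.ab_im_ctbl_ultra}$\!\implies\!$\eqref{d.cotorsion}. The equivalent reformulation asserted in the theorem then follows from the characterization recorded in condition~\eqref{d.ab_has_cotorsion} of Theorem~\ref{T.not_prin}.

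The only delicate point I anticipate is the first step: verifying that the additive decomposition $\pi f=\sum_k g_k\circ p_k$ really is a ring decomposition, and that discarding the zero summands preserves a ring factorization through the surviving ultraproducts (so that the hypothesis validly forces some remaining $\U_k$ to be not countably complete). Once that is settled, everything else is a routine transcription into the ring setting of the abelian-group argument, using the additive group structure on ultraproducts of rings at the places where Lemma~\ref{L.ultra} exploited the cyclic subgroup generated by a group element.
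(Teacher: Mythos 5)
Your proof is correct and is essentially the paper's own argument: the paper states Theorem~\ref{T.principal} with no separate proof, as obtained by combining Theorem~\ref{T.not_prin} (whose proof contains exactly your decompose-and-discard-zero-summands step, with the same pairwise-annihilation observation handling your ``delicate point'') with the idea of Lemma~\ref{L.ultra} (your passage through $\Z^\omega/\U'$), via Lemma~\ref{L.countable} and Proposition~\ref{P.cotor} just as you invoke them. The only remark worth adding is that the delicate first step can also be settled more cheaply: the natural map onto a finite product of distinct ultraproducts is a surjective ring homomorphism, so any additive factorization through it is automatically a ring factorization, and one may simply forget the multiplication and apply Theorem~\ref{T.not_prin} to the additive groups $(R_i,+).$
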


\section{Monoids}\label{S.monoids}

In studying homomorphisms from direct product monoids
onto a monoid $B,$
it is useful to assume some cancellation condition on $B.$
One that will suffice for our present purposes is
\begin{equation}\begin{minipage}[c]{35pc}\label{d.cancel}
$xy=x\implies y=e$ for $x,y\in B.$
\end{minipage}\end{equation}
Note that~\eqref{d.cancel} implies that one-sided inverses
are two-sided, since if $xy=e,$ we get $xyx=x,$ which
by~\eqref{d.cancel} gives $yx=e.$

We shall encounter two sorts of obstruction to
mapping infinite products onto $B$ in ways that indiscriminately
merge the factors.
On the one hand, there is the same effect of noncommutativity
that we took advantage of in the case of groups.
On the other hand, noninvertible elements create restrictions.
For instance, though linear algebra shows that the additive
group $\Q$ admits homomorphisms from the additive
group $\Q^\omega$ that behave arbitrarily on
$\bigoplus_\omega \Q,$ it is not hard to show that,
writing $\Q^{\geq 0}$ for the additive
monoid of nonnegative rational numbers, it is impossible
to have a homomorphism
$(\Q^{\geq 0})^\omega\to\Q^{\geq 0}$
that acts in a nonzero way on infinitely many of the
summands of $\bigoplus_\omega \Q^{\geq 0}.$

In our factorization results for groups, we
divided out by the center of $B;$ in the case of monoids, we will
divide out by the group of {\em central invertible} elements.
There are two versions of this concept: $Z(U(B)),$
the center of the group of units (invertible elements) of $B,$
and $U(Z(B)),$ the group of units of the center;
the former may be larger than the latter.
It is $U(Z(B)),$ the smaller of the two, that we will divide out by
(though the other will make a brief appearance in a proof).
Note that since $U(Z(B))$ consists of $\!B\!$-centralizing
invertible elements, one can speak (without distinguishing
right from left) of the orbits of $B$ under
multiplication by that group, and the set of such
orbits forms a factor-monoid $B/U(Z(B)).$
Clearly, the noninvertible elements of this factor-monoid
are precisely the cosets of the noninvertible elements of $B.$
We shall write $\pi$ for the projection map $B\to B/U(Z(B)).$

Given a monoid homomorphism $f:\prod_{i\in I} M_i\to B,$ we
define the analog of the filter $\F$ of~\eqref{d.F}, namely
\begin{equation}\begin{minipage}[c]{35pc}\label{d.monoid_F}
$\F\ =\ \{S\subseteq I\mid$ the composite map
$\prod_{i\in I} M_i\to B \to B/U(Z(B))$\\
\hspace*{13em} factors through the
projection $\prod_{i\in I} M_i\to \prod_{i\in S}M_i\}$\\[.2em]
\hspace*{1.5em}%
$=\ \{S\subseteq I\mid f(\prod_{i\in I-S} M_i)\subseteq U(Z(B))\}.$
\end{minipage}\end{equation}

The version of Lemma~\ref{L.gp_Us} that we will use for monoids
is not, as for rings, a carbon
copy of that lemma, so we shall give the statement and proof.
(But we will cut corners where the method of proof is the
same; so the reader might want to review the proof of
Lemma~\ref{L.gp_Us} before beginning this one.)
We do not yet assume the cancellativity condition~\eqref{d.cancel}.

\begin{lemma}\label{L.monoid_Us}
Let $f:\prod_{i\in I} M_i\to B$ be a homomorphism from a direct
product of monoids $M_i$ to a monoid $B,$ which is surjective
\textup{(}or more generally, such that the homomorphism
$\pi f:\prod_{i\in I} M_i\to B\to B/U(Z(B))$ is surjective\textup{)}.
Then the following two conditions are equivalent.
\begin{equation}\begin{minipage}[c]{35pc}\label{d.monoid_no_Us}
The homomorphism $\pi f:\prod_{i\in I} M_i\to B\to B/U(Z(B))$
does not factor through the natural map
$(\prod_{i\in I} M_i)/\U_0\times\dots
\times(\prod_{i\in I} M_i)/\U_{n-1}$ for any finite family
of ultrafilters $\U_0,\dots,\U_{n-1}$ on $I.$
\end{minipage}\end{equation}
\vspace{.2em}
\begin{equation}\begin{minipage}[c]{35pc}\label{d.monoid_Jn}
There exists a partition of $I$ into countably
many subsets $J_0,\,J_1,\dots\,,$ such that either\\[.2em]
\textup{(\ref{d.monoid_Jn}a)}\  Each submonoid
$\prod_{i\in J_n} M_i\subseteq \prod_{i\in I} M_i$ contains a
pair of invertible elements $x_n,$ $y_n$
whose images under $f$ do not commute in $B,$\\[.2em]
or\\[.2em]
\textup{(\ref{d.monoid_Jn}b)}\  Each submonoid
$\prod_{i\in J_n} M_i\subseteq \prod_{i\in I} M_i$ contains an
element $z_n$ whose image in $B$ is noninvertible.
\end{minipage}\end{equation}
\end{lemma}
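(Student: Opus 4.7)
The plan is to follow the pattern of Lemma~\ref{L.gp_Us}, adapting to monoids by tracking separately the two possible reasons an element's image can fail to lie in $U(Z(B))=Z(B)\cap U(B)$: either its image is non-central, or it is non-invertible.

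For~\eqref{d.monoid_Jn}$\Rightarrow$\eqref{d.monoid_no_Us} the argument is immediate. In conclusion~(a), $f(x_n)\in U(B)$ fails to commute with $f(y_n)$, so $f(x_n)\in U(B)\setminus Z(B)\subseteq B\setminus U(Z(B))$; in conclusion~(b), $f(z_n)\notin U(B)\supseteq U(Z(B))$. Either way $I-J_n\notin\F$, and by~\eqref{d.cap_ultra} $\F$ cannot be a finite intersection of ultrafilters.

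For the reverse direction, apply~\eqref{d.cap_ultra} to~\eqref{d.monoid_no_Us} to obtain a partition $(J^0_n)$ of $I$ with each $I-J^0_n\notin\F$, so every $\prod_{i\in J^0_n}M_i$ contains some $w_n$ with $f(w_n)\notin U(Z(B))$. Two cases arise. In \emph{Case~1}, for infinitely many $n$ the monoid $\prod_{i\in J^0_n}M_i$ contains an element with non-invertible $f$-image; restrict to the subsequence of such $n$ and absorb the remaining pieces into any one of them to obtain conclusion~(b). In \emph{Case~2}, only finitely many $J^0_n$ admit such an element, so merging those finitely many exceptions into any one surviving piece yields a partition $(J_n)$ in which every $\prod_{i\in J_n}M_i$ has $f$-image inside $U(B)$ but not inside $Z(B)$, and so supplies a $w_n$ with $f(w_n)\in U(B)\setminus Z(B)$. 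For Case~2, mimic Lemma~\ref{L.gp_Us}: choose $b\in B$ that does not commute with $f(w_n)$; use surjectivity of $\pi f$ to lift $\pi(b)$ to $\pi f(y)$ for some $y\in\prod_{i\in I}M_i$; split $y=y_n y'$ with $y_n\in\prod_{i\in J_n}M_i$ and $y'\in\prod_{i\in I-J_n}M_i$; note that $f(y')$ commutes with $f(w_n)$ by disjointness of supports and that multiplication by an element of $U(Z(B))$ does not affect commutators, to deduce that $f(y_n)$ does not commute with $f(w_n)$. Both $f(w_n)$ and $f(y_n)$ lie in $U(B)$, since $I-J_n=\bigsqcup_{m\neq n}J_m$ and each $J_m$ maps into $U(B)$ by Case~2.

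The chief obstacle is the additional demand in conclusion~(a) that $x_n:=w_n$ and $y_n$ themselves be invertible in the monoid $\prod_{i\in J_n}M_i$, not merely have $f$-images in $U(B)$. The plan is to address this by a secondary refinement: replace each candidate by its ``invertible shadow'' in $\prod_{i\in J_n}M_i$---the element agreeing with the original on coordinates where the component lies in $U(M_i)$ and equal to the identity elsewhere, which automatically lies in $U(\prod_{i\in J_n}M_i)$---and case-split once more. If these shadows still have non-commuting images, conclusion~(a) is achieved; otherwise, a non-invertible coordinate of $w_n$ (or $y_n$) should yield a singleton-support element of $\prod_{i\in J_n}M_i$ whose $f$-image is non-invertible, forcing us back into Case~1. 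Verifying this last reduction is the step I expect to require the most care.
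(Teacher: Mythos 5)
Your argument coincides with the paper's proof everywhere except at your final ``invertible shadow'' step: the forward direction, the appeal to~\eqref{d.cap_ultra}, the dichotomy according to whether infinitely many of the submonoids $\prod_{i\in J^0_n}M_i$ contain an element with noninvertible image, the absorption of exceptional pieces into one surviving piece, and the commutator argument in Case~2 (choose $b$ not commuting with $f(w_n),$ lift $\pi(b)$ using surjectivity of $\pi f,$ split $y=y_ny'$ by supports, and use that multiplication by central units does not affect commutation) are all exactly the paper's steps. One minor slip: after merging the finitely many exceptional pieces into a surviving piece, the enlarged piece's image is \emph{not} contained in $U(B)$; this is harmless, since the witnesses for that piece can be taken inside the original good sub-piece, which is how the paper arranges matters --- it partitions a subset $J\subseteq I,$ chooses witnesses, and only then tacks $I-J$ onto one piece.

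The shadow step, however, is where the proposal fails, and the fallback you propose cannot occur: in your Case~2 the defining property of the relevant pieces is precisely that \emph{every} element of $\prod_{i\in J_n}M_i$ has invertible $f$-image, so a noninvertible coordinate of $w_n$ or $y_n$ can never yield a singleton-support element with noninvertible image --- noninvertible monoid elements map to units all the time (e.g.\ a free monoid mapping into a group does so everywhere). Concretely, if each $M_i$ is a free monoid on two generators, then $U(M_i)=\{e\},$ all shadows are the identity and their images trivially commute, yet no element whatever has noninvertible image; so your dichotomy ``shadows succeed, or we are back in Case~1'' is false, and the reduction you flagged as needing care is in fact unobtainable by any local (piecewise) argument. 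It is worth knowing that the paper's own proof does nothing at this point: it concludes \textup{(\ref{d.monoid_Jn}a)} from $f(x_n),f(y_n)\in U(B)$ alone, i.e.\ it verifies only invertibility of the \emph{images}, not of $x_n,y_n$ themselves. So you have put your finger on a genuine subtlety in the statement --- and the strict reading does appear to be what the deduction of case~(a) of Theorem~\ref{T.monoid_main} uses, since there one wants the elements $x_S$ to lie in the group $U(\prod_{i}M_i)=\prod_i U(M_i)$ so that $f$ restricts to a group homomorphism into $U(B)$ --- but your patch does not close the gap; any honest resolution must either weaken \textup{(\ref{d.monoid_Jn}a)} to require invertibility only of the images (and then rework how the theorem uses it) or argue globally, not coordinate by coordinate.
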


\begin{proof}
To get~\eqref{d.monoid_Jn}$\!\implies\!$\eqref{d.monoid_no_Us},
note that in the situation of~(\ref{d.monoid_Jn}a), since for each
$n,$ $f(x_n)$ and $f(y_n)$ are invertible
and do not commute in $B,$ they do not lie in $Z(U(B)).$
Hence in particular $f(x_n)\notin U(Z(B)),$ so
by the final line of~\eqref{d.monoid_F}, $I-J_n\notin\F.$
That this implies~\eqref{d.monoid_no_Us} is seen as in
Lemma~\ref{L.gp_Us}.

If, rather, we are in the situation of~(\ref{d.monoid_Jn}b),
then the fact that the $f(z_n)$ are nonunits
implies that they do not lie in $U(Z(B)),$ giving the same result
for the same reason.

The proof of the converse begins, as for Lemma~\ref{L.gp_Us},
with the observation that~\eqref{d.monoid_no_Us}
implies that there exists a partition of $I$ into countably
many subsets $J_0,\,J_1,\dots,$ such
that each $\prod_{i\in J_n} M_i$ contains elements
mapped by $f$ to elements of $B$ not in $U(Z(B)).$
Let $L_n=\prod_{i\in J_n} M_i,$
so that $\prod_{i\in I} M_i=\prod_{n\in\omega} L_n.$
Clearly, it will either be true that for infinitely many $n,$ the
submonoid $f(L_n)\subseteq B$ contains a noninvertible element, or
that for infinitely many $n,$ that submonoid consists entirely of
invertible elements.

In the former case, those $J_n$ such that
$f(L_n)$ contains a noninvertible element of $B$
will constitute a partition of some subset $J\subseteq I$ into
countably many subsets.
If we enlarge one of these sets by throwing in the complementary
set $I-J,$ we get a partition of $I$ of the sort described
in~(\ref{d.monoid_Jn}b).

If, on the other hand, there are infinitely many $n$ such that
$f(L_n)$ consists entirely of invertible
elements of $B,$ then for each such $n,$ let us choose an
$x_n\in L_n$ such that $f(x_n)\notin U(Z(B)),$ and then a
$y\in\prod_{m\in\omega} L_m$ such that $f(y)$
does not commute with $f(x_n).$
As in the proof of Lemma~\ref{L.gp_Us}, we can
obtain from $y$ an element $y_n\in L_n$ such that
$f(y_n)$ still does not commute with $f(x_n).$
By assumption, $f(L_n)$ consists of invertible
elements, so $f(x_n)$ and $f(y_n)$ belong to $U(B).$
Thus, we have a partition of some $J\subseteq I$ into countably
many subsets as in~(\ref{d.monoid_Jn}a).
Again tacking $I-J$ onto one
of these, we can take this to be a partition of the whole set $I.$
\end{proof}

This leads to an analog of Theorem~\ref{T.gp_main} which,
similarly, has two alternative conclusions.
We shall describe one of these by referring to
that earlier theorem, and spell out the other.

\begin{theorem}\label{T.monoid_main}
Let $B$ be a monoid satisfying the cancellation
condition~\eqref{d.cancel}, and suppose
there exists a family $(M_i)_{i\in I}$ of monoids,
and a monoid homomorphism $f:\prod_{i\in I} M_i\to B$ such that the
induced homomorphism $\pi f:\prod_{i\in I} M_i\to B/U(Z(B))$ does
not factor through any finite product of ultraproducts of the $M_i.$

Then either\\[.2em]
\textup{(a)}~the group $U(B)$ satisfies the hypothesis,
and hence the conclusions, of Theorem~\ref{T.gp_main},\\[.2em]
or\\[.2em]
\textup{(b)}~$B$ contains a family of elements $(a_S)$
indexed by the subsets $S\subseteq\omega,$ such that
\begin{equation}\begin{minipage}[c]{35pc}\label{d.monoid_commute}
$a_\emptyset=e,$ and
all the elements $a_S$ $(S\subseteq\omega)$ commute with one another,
\end{minipage}\end{equation}
\begin{equation}\begin{minipage}[c]{35pc}\label{d.monoid_a_disj}
For disjoint sets $S,\ T\subseteq\omega,$
one has $a_S\,a_T=a_{S\cup T}.$
\end{minipage}\end{equation}
\begin{equation}\begin{minipage}[c]{35pc}\label{d.monoid_subseteq}
For sets $S\subsetneq T\subseteq\omega,$
$a_T$ is a right multiple of $a_S,$
but $a_S$ is not a right multiple of $a_T.$
\end{minipage}\end{equation}
\end{theorem}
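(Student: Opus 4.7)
The plan is to apply Lemma~\ref{L.monoid_Us}, which yields a countable partition $J_0, J_1, \ldots$ of $I$ satisfying either~(\ref{d.monoid_Jn}a) or~(\ref{d.monoid_Jn}b); these two alternatives will produce, respectively, conclusions~(a) and~(b) of the theorem.

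In case~(\ref{d.monoid_Jn}a), each pair $x_n, y_n$ lies in the group $G_n := U(\prod_{i\in J_n} M_i) = \prod_{i\in J_n} U(M_i)$, so restriction gives a group homomorphism $\prod_{n\in\omega} G_n \to U(B)$ under which, for each $n$, the images $f(x_n)$ and $f(y_n)$ fail to commute in $U(B)$. This is exactly condition~\eqref{d.gp_Jn} for the restricted map, so the easy direction \eqref{d.gp_Jn}$\implies$\eqref{d.gp_no_Us} of Lemma~\ref{L.gp_Us} shows that the composite $\prod_n G_n \to U(B) \to U(B)/Z(U(B))$ does not factor through any finite product of ultraproducts of the $G_n$. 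That is precisely the hypothesis of Theorem~\ref{T.gp_main} applied to the group $U(B)$, delivering~(a).

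In case~(\ref{d.monoid_Jn}b), for each $S \subseteq \omega$ let $z_S \in \prod_{i\in I} M_i$ be the element whose component in the block $\prod_{i\in J_n} M_i$ equals $z_n$ for $n \in S$ and $e$ for $n \notin S$, and set $a_S := f(z_S)$. Then~\eqref{d.monoid_commute} and~\eqref{d.monoid_a_disj} follow by routine componentwise computation: any two $z_S, z_T$ commute because within each block both components lie in $\{e, z_n\}$ (and such elements commute with themselves), and for disjoint $S, T$ one has $z_S z_T = z_{S\cup T}$ on the nose. In particular, for $S \subsetneq T$, \eqref{d.monoid_a_disj} gives $a_T = a_S \cdot a_{T-S}$, so $a_T$ is a right multiple of $a_S$.

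The heart of the proof is the second half of~\eqref{d.monoid_subseteq}: that $a_S$ is not a right multiple of $a_T$. Suppose $a_S = a_T \cdot b$ for some $b \in B$; substituting $a_T = a_S \cdot a_{T-S}$ yields $a_S = a_S (a_{T-S} b)$, and the cancellation hypothesis~\eqref{d.cancel} forces $a_{T-S} b = e$. By the observation immediately after~\eqref{d.cancel} that one-sided inverses in $B$ are two-sided, this makes $a_{T-S}$ a unit of $B$. Now pick any $n \in T - S$ and apply~\eqref{d.monoid_a_disj} to the disjoint decomposition $T - S = \{n\} \cup ((T-S) \setminus \{n\})$ to write $a_{T-S} = a_{\{n\}} \cdot a_{(T-S)\setminus\{n\}}$, where $a_{\{n\}} = f(z_n)$. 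Unit-ness of $a_{T-S}$ then produces a right inverse of $f(z_n)$, hence (again by the cancellation-implies-two-sidedness observation) an inverse of $f(z_n)$, contradicting the defining property of $z_n$ in~(\ref{d.monoid_Jn}b). The main obstacle is precisely this last step: combining the cancellation law with a disjoint-support factorization to propagate noninvertibility, which is the only place where hypothesis~\eqref{d.cancel} is genuinely used.
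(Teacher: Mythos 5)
Your proof is correct and takes essentially the same route as the paper's: Lemma~\ref{L.monoid_Us} splits into the two cases, with~(\ref{d.monoid_Jn}a) yielding conclusion~(a) (you usefully spell out the restriction of $f$ to $\prod_n U(\prod_{i\in J_n} M_i)\to U(B),$ which the paper dismisses as ``easy to verify'') and~(\ref{d.monoid_Jn}b) yielding the identical construction of the elements $a_S.$ Your final cancellation step is only a cosmetic reordering of the paper's: the paper writes $a_T=a_S\,a_{T-(S\cup\{n\})}\,a_{\{n\}}$ and cancels $a_T$ to make $a_{\{n\}}$ left invertible directly, whereas you first cancel to make $a_{T-S}$ a unit and then factor out $a_{\{n\}}$ --- equivalent arguments invoking the same one-sided-inverses-are-two-sided observation after~\eqref{d.cancel}.
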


\begin{proof}
The two cases of~\eqref{d.monoid_Jn}
will yield the two alternative conclusions shown.
It is easy to verify that~(\ref{d.monoid_Jn}a) yields conclusion~(a).

In case~(\ref{d.monoid_Jn}b), let $L_n=\prod_{i\in J_n} M_i,$
and take elements $z_n\in L_n$ with noninvertible images in $B.$
For each $S\subseteq\omega,$ let $a_S$ be the image under $f$ of
the element of $\prod_{n\in S} L_n\subseteq\prod_{n\in\omega} L_n$
whose $\!n\!$-th coordinate is $z_n$ for each $n\in S.$
Then~\eqref{d.monoid_commute} and~\eqref{d.monoid_a_disj} are
immediate, and the first assertion of~\eqref{d.monoid_subseteq}
follows from~\eqref{d.monoid_a_disj} applied to $S$ and $T-S.$

To get the final assertion of~\eqref{d.monoid_subseteq}, choose
any $n\in T-S,$ and note that by~\eqref{d.monoid_a_disj}, we have
\begin{equation}\begin{minipage}[c]{35pc}\label{d.multiple}
$a_T\ =\ a_S\ a_{T-(S\cup\{n\})}\ a_{\{n\}}.$
\end{minipage}\end{equation}
If we also had $a_S=a_T\,b$ for some $b\in B,$ then
substituting this into the right-hand-side of~\eqref{d.multiple}
and cancelling $a_T$ by~\eqref{d.cancel}, we could conclude
that $a_{\{n\}}$ was left invertible, hence
by the observation following~\eqref{d.cancel},
invertible, contradicting our choice of $z_n.$
\end{proof}

In case~(b) of Theorem~\ref{T.monoid_main},
we cannot say, as we can in case~(a), that distinct
sets $S$ yield distinct elements $a_S\in B.$
For instance, let $B$ be the factor-monoid of
the additive monoid $(\Z^{\geq 0})^\omega$ by the relation
that equates elements $x$ and $y$ if there is
some $n\geq 0$ such that $x$ and $y$ agree at all but the first $n$
coordinates, and such that the sum of
the entries at those first $n$ coordinates
is the same for $x$ and $y.$
Then $B$ is a cancellative abelian monoid with trivial
group of units, and the quotient map $f:(\Z^{\geq 0})^\omega\to B$
does not annihilate any of the $\delta_n$
(defined in $(\Z^{\geq 0})^\omega$ as in $\Z^\omega).$
Hence~(\ref{d.monoid_Jn}b) holds for this $f,$
with the $J_n$ taken to be the singletons $\{n\},$ and $z_n=\delta_n.$
But defining the $a_S$ in terms of these as in
the proof of Theorem~\ref{T.monoid_main},
we find that for finite subsets $S,\,T\subseteq\omega$
of the same cardinality, we have $a_S=a_T$ in $B;$
so the $a_S$ are not all distinct.

Nevertheless, in the situation of Theorem~\ref{T.monoid_main}(b)
we always get continuum many distinct $a_S.$
For by~\eqref{d.monoid_subseteq}, distinct {\em comparable}
sets give distinct elements; and
the partially ordered set of subsets of any countably infinite set
has chains of the order-type of the real numbers.
(Indeed, the countable set of rational numbers has the chain of
Dedekind cuts, and any countably infinite set can be
put in bijective correspondence with the rationals.)
Thus, we get

\begin{corollary}\label{C.monoid_c}
In the situation of conclusion~\textup{(b)} of
Theorem~\ref{T.monoid_main}, $B$ has a set of mutually
commuting noninvertible elements
which form, under the relation of divisibility,
a chain with the order-type of the real numbers.
In particular, $B$ \textup{(}and in fact, $B/U(Z(B)))$
has at least the cardinality of the continuum.\qed
\end{corollary}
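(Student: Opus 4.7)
The plan is to apply the standard embedding of $(\mathbb{R},\leq)$ into $(\mathcal{P}(\omega),\subseteq)$ via Dedekind cuts, and then transport this chain to $B$ using the family $(a_S)_{S\subseteq\omega}$ supplied by conclusion~(b) of Theorem~\ref{T.monoid_main}. Specifically, I would fix a bijection $\omega\to\Q$ and, for each real $r$, set $S_r=\{q\in\Q\mid q<r\}\subseteq\omega$. By density of $\Q$ in $\mathbb{R}$, the map $r\mapsto S_r$ is strictly order-preserving: $r<r'$ gives $S_r\subsetneq S_{r'}$, so $\{S_r\mid r\in\mathbb{R}\}$ is a chain in $\mathcal{P}(\omega)$ of order-type of the real line.

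Now consider the elements $a_{S_r}\in B$. Mutual commutativity is immediate from~\eqref{d.monoid_commute}. For $r<r'$, the strict inclusion $S_r\subsetneq S_{r'}$ combined with~\eqref{d.monoid_subseteq} says that $a_{S_{r'}}$ is a right multiple of $a_{S_r}$, while $a_{S_r}$ is not a right multiple of $a_{S_{r'}}$. The first half exhibits $a_{S_r}$ as a left-divisor of $a_{S_{r'}}$, and the second half both forces $a_{S_r}\neq a_{S_{r'}}$ and rules out divisibility in the reverse direction; so $\{a_{S_r}\}_{r\in\mathbb{R}}$ is a strict chain under divisibility of order-type $\mathbb{R}$. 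For noninvertibility, recall from the proof of Theorem~\ref{T.monoid_main}(b) that $a_{\{n\}}=f(z_n)$ is noninvertible by choice of $z_n$; for any nonempty $S$ and any $n\in S$, \eqref{d.monoid_a_disj} gives $a_S=a_{\{n\}}\,a_{S\setminus\{n\}}$, and if $a_S$ were invertible this would produce a right inverse of $a_{\{n\}}$, which by the remark following~\eqref{d.cancel} would make $a_{\{n\}}$ invertible, a contradiction. Thus discarding at most one index (the unique $r$ with $S_r=\emptyset$, if such exists) still leaves continuum many noninvertible $a_{S_r}$ forming a chain under divisibility.

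For the parenthetical assertion about $B/U(Z(B))$: if $r<r'$ and $\pi(a_{S_r})=\pi(a_{S_{r'}})$, then $a_{S_r}=a_{S_{r'}}u$ for some $u\in U(Z(B))$, which exhibits $a_{S_r}$ as a right multiple of $a_{S_{r'}}$, contradicting~\eqref{d.monoid_subseteq} once more. Hence the images $\pi(a_{S_r})$ are pairwise distinct in $B/U(Z(B))$, giving the stated cardinality lower bound, which of course lifts to $B$ itself. I do not anticipate any real obstacle: the argument essentially implements the sketch given just before the corollary, the only subtle point being the asymmetric use of~\eqref{d.monoid_subseteq} to secure simultaneously a strict chain under divisibility and distinctness of images modulo $U(Z(B))$.
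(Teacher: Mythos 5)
Your proof is correct and takes essentially the same route as the paper: the paper also obtains the chain by identifying $\omega$ with $\Q$ and using the Dedekind cuts, then applies~\eqref{d.monoid_subseteq} to conclude that distinct comparable sets yield elements in strict divisibility relation, hence continuum many distinct $a_S.$ Your explicit checks of noninvertibility and of distinctness modulo $U(Z(B))$ just fill in details the paper leaves implicit (noninvertibility of $a_S$ for $S\neq\emptyset$ also follows directly from~\eqref{d.monoid_subseteq} applied to the pair $\emptyset\subsetneq S,$ since $a_\emptyset=e,$ so $a_S$ can have no right inverse).
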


The results proved above are far from optimal.
For instance, the conclusions of Theorem~\ref{T.monoid_main} and
Corollary~\ref{C.monoid_c} are consistent with $B$ being
the additive monoid $\mathbb{R}^{\geq 0}$ of nonnegative real numbers;
but that case is easy to exclude.
Indeed, suppose $B=\mathbb{R}^{\geq 0}$ admitted a map as in the
hypothesis of Theorem~\ref{T.monoid_main}.
By the proof of Lemma~\ref{L.monoid_Us}, since $B$
has trivial group of units, we must have a homomorphism
$f:\prod_{n\in\omega} L_n\to B$ such that each $L_n$ has
an element $x_n$ making $f(x_n)$ a positive real number.
By the Archimedean property of the reals, we can
modify our choices of $x_n$ so that
for each $n$ we have $f(x_n)\geq 1.$
Thus, when we construct elements
$a_S\in B$ as in the proof of the theorem, we get
$a_{\{n\}}\geq 1$ for each $n,$ from which
it follows that for any $S\subseteq\omega$ of $\geq m$
elements $(m\in\omega),$ $a_S\geq m.$
For $S$ infinite, this gives a contradiction; so $B$ admits
no such map.
It is not clear to me what the best assertion that can be
gotten by this technique is.

Let us also note that in place of the $\!2\!$-way subdivision of
the sets $J_n$ used in the proof of Lemma~\ref{L.monoid_Us},
we could (at least if we assumed full cancellativity rather than
just~\eqref{d.cancel}) have used a $\!3\!$-way subdivision, noting
that for each $n,$
$f(L_n)$ either contains noncommuting {\em invertible} elements of $B,$
or contains noncommuting {\em non}invertible elements, or
contains a central noninvertible element.
(Cancellativity is needed to show that if a nonunit $x$
and a unit $u$ fail to commute, then so do the two
nonunits $x$ and $xu.)$
So there must be infinitely many $n$ for
which one of these statements holds, and
we can deduce a three-alternative conclusion:
Either, as before, we have invertible elements $a_S,\,b_S\in B$
indexed by the subsets $S\subseteq\omega$
which can be distinguished by their commutativity relations,
or we have elements $a_S,\,b_S\in B$ which, except
for $a_\emptyset,$ $b_\emptyset,$ are {\em noninvertible}, and satisfy
the same relations and can be distinguished in the same way,
or we have {\em central} elements $a_S$ which
satisfy~\eqref{d.monoid_commute}-\eqref{d.monoid_subseteq}.

Though one could define ``almost direct factors'' for
monoids, as for groups, using submonoids that are each others'
centralizers, there doesn't seem
to be an analogous way to ``split'' a monoid
based on noninvertible central
elements; so I have not attempted to formulate
an analog of Theorem~\ref{T.gp_CC}.
I leave further exploration of these
questions to those better versed than I in the study of monoids.

One can also consider for {\em semigroups} the same factorization
properties that we have studied here for monoids.
Since the above constructions involved
elements of direct products defined to have the
value $e$ on complements of given
subsets $S$ of our index set, the absence of identity elements
should lead to changes in what can be proved.

\section{Lattices: a case worth looking at}\label{S.lattices}

One other class of mathematical structures suggests itself, to which
similar methods might be applicable -- lattices.
Just as a direct product decomposition of a group or monoid
leads to certain pairs of elements that must commute,
and a direct product decomposition of a ring leads to certain
pairs of elements that must have zero product,
so a direct product decomposition of a lattice leads to
certain $\!3\!$-tuples of elements that must
satisfy distributivity relations.
Perhaps this observation can be
used to get lattice-theoretic analogs
of some of the results of this note.

(In \cite[\S5]{cap_ultra} I speculate on very
general properties of a variety of algebras that would
allow one to get such results; but I am not confident that %
that approach will go anywhere.)

\section{Acknowledgements}\label{S.ackn}
I am indebted to Thomas Scanlon for suggesting Lemma~\ref{L.TS},
to M.\,Dugas, L.\,Fuchs, K.\,M.\,Rangaswamy and J.\,Rotman for
helpful correspondence regarding \S\S\ref{S.ab}--\ref{S.ab_ac+cotor},
and to the referee for several useful suggestions.


\begin{thebibliography}{00}

\bibitem{EKN} Robert El Bashir, Tom\'a\v{s} Kepka and Petr N\v{e}mec,
{\em Modules commuting \textup{(}via Hom\textup{)} with some colimits,}
Czechoslovak Math. J. {\bf 53} (128) (2003), 891--905.
MR~{\bf 2004i}:16002.

\bibitem{Z[B]} George M. Bergman,
{\em Boolean rings of projection maps,}
J. London Math. Soc. (2) {\bf 4} (1972) 593--598.
MR~{\bf 47}\#93.

\bibitem{P_vs_cP} George M. Bergman,
{\em Two statements about infinite products that are not quite true},
in {\em Groups, Rings \& Algebras} (Proceedings of a conference in honor
of Donald S.\,Passman), ed.\ W.\,Chin, J.\,Osterburg,
and D.\,Quinn,
Contemporary Mathematics {\bf 420} (2006) 35-58.
MR~{\bf 2007k}:16008>

\bibitem{cap_ultra} George M. Bergman,
{\em Families of ultrafilters, and homomorphisms on infinite direct
product algebras,}
J. Symbolic Logic {\bf 79} (2014) 223--239.
MR3226022.

\bibitem{prod_Lie1} George M. Bergman and Nazih Nahlus,
{\em Homomorphisms on infinite direct product algebras,
especially~Lie~algebras,}
J. Alg. {\bf 333} (2011) 67--104.
MR~{\bf 2012f}:17005.

\bibitem{prod_Lie2} George M. Bergman and Nazih Nahlus,
{\em Linear maps on $k^I,$ and homomorphic images of
infinite direct product algebras,}
J. Alg. {\bf 356} (2012) 257--274.
MR2891132.

\bibitem{Ch+Keis} C.\,C.\,Chang and H.\,J.\,Keisler, 
{\em Model Theory,}
3rd ed., Studies in Logic and the Foundations of Mathematics,
v.73. North-Holland, 1990.
MR~{\bf 91c}:03026.

\bibitem{Chase1} Stephen U. Chase,
{\em A remark on direct products of modules},
Proc. Amer. Math. Soc. {\bf 13} (1962) 214--216.
MR~{\bf 25}\#1197.

\bibitem{Chase2} Stephen U. Chase,
{\em On direct sums and products of modules},
Pacific J. Math. {\bf 12} (1962) 847--854.
MR~{\bf 26}\#2472.

\bibitem{C+N} W. W. Comfort and S. Negrepontis, 
{\em The theory of ultrafilters,}
Die Grundlehren der mathematischen Wissenschaften, Band 211.
Springer, 1974.
MR~{\bf 53}\#135.

\bibitem{MD+RG} Manfred Dugas and R\"udiger G\"obel,
{\em Every cotorsion-free ring is an endomorphism ring},
Proc. London Math. Soc. (3) {\bf 45} (1982) 319--336.
MR~{\bf 84b}:20064.

\bibitem{MD+BZB} M.\,Dugas and B.\,Zimmermann-Huisgen,
{\em Iterated direct sums and products of modules},
pp.\,179--193 in
{\em Abelian group theory \textup{(}Oberwolfach, 1981\textup{)}},
Lecture Notes in Math., 874, Springer, 1981.
MR~{\bf 83f}:16059.

\bibitem{Eklof} Paul C.\ Eklof,
{\em The structure of ultraproducts of abelian groups,}
Pacific J. Math. {\bf 47} (1973) 67--79.
MR~{\bf 48}\#6281.

\bibitem{Fuchs_63} L\'{a}szl\'{o} Fuchs,
{\em Note on factor groups in complete direct sums,}
Bull.\ Acad.\ Polon.\ Sci., S\'{e}r. Sci.
Math. Astronom. Phys.
{\bf 11} (1963) 39--40.
MR~{\bf 27}\#204.

\bibitem{Fuchs1} L\'{a}szl\'{o} Fuchs, 
{\em Infinite abelian groups, Vol. I,}
series in Pure and Applied Mathematics, vol. 36.
Academic Press, 1970.
MR~{\bf 41}\#333.

\bibitem{Fuchs2} L\'{a}szl\'{o} Fuchs, 
{\em Infinite abelian groups, Vol. II,}
series in Pure and Applied Mathematics, vol. 36-II.
Academic Press, 1973.
MR~{\bf 50}\#2362.

\bibitem{J+L} Christian U.\,Jensen and Helmut Lenzing,
{\em Model-theoretic algebra with particular emphasis on fields,
rings, modules},
Algebra, Logic and Applications, 2.
Gordon and Breach Science Publishers, 1989.
MR~{\bf 91m}:03038.

\bibitem{TYL} T.\,Y.\,Lam,
{\em Lectures on modules and rings,}
Springer GTM, v.189, 1999.
MR\,{\bf 99i}:16001.

\bibitem{FM} Fares Maalouf,
{\em Weakly slender algebras,}
Journal of Pure and Applied Algebra {\bf 218} (2014) 1754-1759.

\bibitem{width} Alexey Muranov,
{\em Finitely generated infinite simple groups of infinite
commutator width},
Internat. J. Algebra Comput. {\bf 17} (2007) 607--659.
MR~{\bf 2009f}:20040.

\bibitem{Nunke} R. J. Nunke,
{\em Slender groups,}
Bull. Amer. Math. Soc. {\bf 67} (1961) 274--275.
MR~{\bf 24}\#A163.
(In the MR review of this
paper, ``direct sum'' means ``direct product''.)

\bibitem{Rotman} Joseph J. Rotman,
{\em An introduction to homological algebra,}
2nd edition. Universitext. Springer, 2009.
MR~{\bf 2009i}:18011.

\bibitem{ncmSpecker} Saharon Shelah and Lutz Str\"ungmann,
{\em The failure of the uncountable non-commutative Specker phenomenon,}
J.\ Group Theory {\bf 4} (2001) 417--426.
MR~{\bf 2002g}:20049.

\end{thebibliography}
\end{document}